\newcolumntype{N}{>{\centering\arraybackslash}m{.4in}}
\newcolumntype{G}{>{\centering\arraybackslash}m{2in}}
\theoremstyle{plain}
\newtheorem{lemma}{Lemma}[section]
\newtheorem{theorem}{Theorem}[section]
\theoremstyle{remark}
\newtheorem{definition}{Definition}[section]
\newtheorem{remark}{Remark}[section]
\newcommand{\innerthmname}{}
\newenvironment{manualtheorem}[1]{%
  \manualtheoreminner
}{\endmanualtheoreminner}
\newcommand{\peq}{\stackrel{\mathcal{L}}{=}}
\newcommand{\tr}{\text{tr}}
\DeclareMathOperator*{\argmax}{arg\,max}
\title[Uniform VAR Inference]{Uniform Inference For Cointegrated Vector Autoregressive Processes}
\author{Christian Holberg \and Susanne Ditlvesen}
\address{Department of Mathematical Sciences, University of Copenhagen, Universitetsparken 5, 2100
Copenhagen Ø, Denmark}
\email{c.holberg@math.ku.dk}
\begin{document}
\maketitle

\begin{abstract}
Uniformly valid inference for cointegrated vector autoregressive processes has so far proven difficult due to certain discontinuities arising in the asymptotic distribution of the least squares estimator. We extend asymptotic results from the univariate case to multiple dimensions and show how inference can be based on these results. Furthermore, we show that lag augmentation and a recent instrumental variable procedure can also yield uniformly valid tests and confidence regions. We verify the theoretical findings and investigate finite sample properties in simulation experiments for two specific examples.

\smallskip
\noindent \textbf{Keywords.} Uniform inference, vector autoregressive process, cointegration, non-stationary
\end{abstract}



\section{Introduction}


Persistence, i.e., long term sensitivity to small shocks, appears to be a commonly occurring characteristic of many stochastic systems encountered in practice. Such processes are often modeled with cointegration where the persistence can be attributed to a number of shared stochastic trends (random walks). Due to their relative simplicity, cointegration models are widely applied. Cointegration in vector-valued autoregressive processes arises when the characteristic polynomial possesses at least one unit root. The asymptotic theory and, hence, inference is heavily reliant upon the fact that a fixed number of roots can be assumed exactly one and the rest stay sufficiently far away from one. This is the case even for simple regression methods such as ordinary least squares. In practice, such assumptions are overly restrictive and more flexible models are often needed for a better description of the empirical data. Slight deviations from the unit root assumption can severely deteriorate the results of the statistical analysis \citep{elliott1998robustness}. Thus, the need arises for inference methods that are uniformly valid over a range of stationary and non-stationary behaviours.


So far, methods of inference with proven uniform guarantees only work in the univariate case. Bootstrap inference algorithms are presented in \cite{andrews1993exactly, hansen1999grid} with uniform guarantees given in \cite{mikusheva2007uniform}. Furthermore, \cite{mikusheva2007uniform} provides a uniform asymptotic framework for one-dimensional autoregressive processes with potential unit roots, which served as an inspiration for much of the work in this paper. 

The problem is well-understood in one dimension, while less progress has been made in multiple dimensions. The only methods with uniform guarantees known to the authors employ lag augmentation \citep{dolado1996making, toda1995statistical}. The idea is simple and easy to apply, but lacks efficiency since it essentially involves overfitting the model. Other methods, seeking to avoid this problem, impose restrictive assumptions on the process such that the inference is no longer uniform, but only holds for specific configurations of parameters. The main approach is to assume that the roots are all of a similar proximity to one. In particular, the autoregressive matrix is modeled as a sequence of matrices that approach the identity matrix at some given rate \(k_n\). This is the setup in the instrumental variable methodology (IVX) developed in \cite{kostakis2015robust, magdalinos2020econometric, phillips2009econometric}. This offers some flexibility in terms of how close the roots can be to one. However, the framework does not allow for processes with simultaneously different degrees of persistence.

Generic results on uniform convergence are provided in \cite{andrews2020generic}. However, they hold only insofar as one can establish the right asymptotic distributions under appropriate drifting sequences. While progress in this direction has been made \citep{phillips2013predictive, phillips2015limit}, there has thus far been no general answer to this problem. For example, these papers do not allow the process to have parts that remain stationary for all sample sizes, and only diagonal regression matrices are allowed. Furthermore, the theory in \cite{andrews2020generic} does not provide a way to perform inference once the appropriate limits are established. The method they propose for the one-dimensional autoregressive process is not feasible for multiple dimensions.

A lot of focus has been given to predictive regression problems in which the predictive power of the past of one process on the future of another is assessed. Efficient tests are developed in \cite{campbell2006efficient, jansson2006optimal} and, based on the ideas of uniform inference for univariate autoregressive processes, \cite{phillips2014confidence} leverage these methods for uniformly valid inference. Unfortunately, most of the work only covers the bivariate case in which the regressor is a univariate autoregressive process so that the theory for one dimension directly applies. Another branch of research concerns inference on cointegrating relations robust to deviations from the unit root assumption \citep{duffy2023cointegration, franchi2017improved}, but, again, specific parameterizations of the deviations limit the generality of these results.

Extending the theory in \cite{mikusheva2007uniform} to multiple dimensions runs into several difficulties. Firstly, it is not clear what assumptions to put on the autoregressive matrix to ensure that the asymptotic results hold uniformly while still covering all relevant cases. In one dimension, the autoregressive parameter is a scalar and it is sufficient that it is real, bounded in norm by 1, and bounded away from -1 by some small \(\delta\). We need to extend this idea to matrices. Secondly, while many of the results on the asymptotics of the sample covariance matrices generalize nicely to multiple dimensions, the proofs are more involved. For example, \cite{mikusheva2007uniform} uses Skorohod's embedding, which famously only works in one dimension, to prove that the errors can be assumed to be Gaussian. The third and perhaps most profound difficulty is that the multivariate setting allows for cointegrated systems (or almost cointegrated systems in the case where the roots are only close to unity). This gives rise to certain asymptotic discontinuities. In particular, it necessitates a proper normalization of the sample covariances. These problems extend to the inferential side where, additionally, computational challenges arise. Naively adapting, for example, the grid bootstrap approach of \cite{hansen1999grid} would cause the computational complexity to explode.


The present work provides methods of inference for multivariate vector autoregressive processes that are proven to be valid uniformly over a set of parameters given in Assumption \ref{ass: U}, including processes that are cointegrated and with roots arbitrarily close to the unit circle. Assumptions \ref{ass: eig} and \ref{ass: jord} are probably slightly stronger than necessary, but they are easy to work with and provide a clear interpretation while still being much more general than anything we have been able to find in the literature. Most of the work dealing with similar problems assume either that \(\Gamma\) is a drifting sequence of diagonal matrices (see, e.g., \cite{phillips2013predictive, phillips2015limit, magdalinos2020econometric}) or impose structural assumptions such that the problem can be reduced to studying only diagonal \(\Gamma\) (see, e.g., \cite{duffy2023cointegration} where they assume that \(\Gamma\) is normal).

The main contributions are the following. First, we extend the asymptotic results of \cite{mikusheva2007uniform} to vector autoregressive processes and where uniformity also holds over a \emph{family} of martingale difference error processes. The main result is Theorem \ref{thm: unif_app}, stating that the asymptotic distributions of the relevant sample covariances can be approximated by stochastic integrals of Ornstein-Uhlenbeck processes (a direct analog to the univariate case). Both the result and the proof are interesting in their own rights. As part of the proof we show that one can approximate the finite sample distribution of crucial statistics by replacing the general error terms with Gaussian errors. We can sample from this approximation at a comparably low computational cost which facilitates inference greatly. 

Second, we provide several ways to construct confidence regions for the autoregressive parameter and show that these are uniformly valid. Confidence regions constructed with IVX and lag augmentation are uniformly valid (but IVX only for the entire autoregressive matrix). 

Third, we show how these confidence regions can be used to answer more general inference questions. The two main applications are confidence intervals for a single coordinate and predictive regression testing with a multivariate regressor. To the best of our knowledge, there have thus far been no attempt in the literature to deal with these applications in a uniform fashion (apart from lag augmentation). We run Monte Carlo experiments to verify the theoretical results and compare the finite sample properties of the different methods. 

Our last contribution is the development of efficient algorithms to solve these inferential tasks. We show how the \emph{Evaluation-Approximation-Maximization} (EAM) algorithm from \cite{kaido2019confidence} circumvents the exploding computational cost inherent in algorithms relying on grid-like methods. Combined with the Gaussian approximation results, this is what makes inference possible in our two main applications.


The paper is structured as follows. Section 2 introduces notation and relevant concepts. In particular, it explains the concept of uniform convergence of random variables and presents vector autoregressive processes. Section 3 is devoted to presenting and proving the main asymptotic results. Section 4 deals with inference and shows how the results of Section 3 can be applied to obtain uniformly valid confidence regions. Furthermore, it contains a section on predictive regression, lag augmentation, and IVX. Section 5 contains the results of our Monte Carlo experiments. Finally, Section 6 concludes. The Appendix contains proofs and further technical details on martingale limit results, the Gaussian approximation, the simulation experiments as well as the EAM algorithm, and details on lag augmentation and IVX.

\section{Preliminaries}

This paper is concerned with vector autoregressive processes of order 1 (VAR(1) processes) fulfilling certain assumptions. Specifically, processes that may be integrated of order 1 and cointegrated, that is, processes for which the first difference is stationary and there exists some linear combinations of the coordinate processes that are stationary. 

\emph{Notation}: For a matrix \(A\in\mathbb{C}^{d\times d}\), \(A^T\) denotes its conjugate transpose and its trace is \(\tr(A)\). We write \(\sigma_{max}(A)\) (\(\sigma_{min}(A)\)) for the largest (smallest) singular value of \(A\), and \(\lambda_{max}(A)\) (\(\lambda_{min}(A)\)) for the eigenvalue of \(A\) with the largest (smallest) magnitude. \(||A||\) is the Frobenius norm and \(||A||_2\) is the spectral norm, i.e., \(||A||=\sqrt{\tr(A^T A)}\) and \(||A||_2 = \sqrt{\sigma_{max}(A)}\). For vectors, \(||\cdot||\) is the usual Euclidean norm. Define \(S_{d}\) to be the set of \(d\times d\) positive semidefinite matrices. We employ the usual big-\(O\) and little-\(o\) notation and use \(o_p\) to denote convergence in probability and \(O_p\) to denote boundedness in probability.

\subsection{Uniform convergence of random variables} \label{sec: prelim}
The definitions of uniform convergence in probability and in distribution are essentially the same as in \cite{kasy2019uniformity, lundborg2021conditional}. Assume some background probability space, \((\Omega, \mathcal{F}, \mathbb{P})\), on which all future random variables are defined. For two random vectors, \(X\) and \(Y\), taking values in \((\mathbb{C}^d, \mathcal{B}(\mathbb{C}^d))\), we denote by \(P_{X}\) and \(P_{Y}\) the law of \(X\) and \(Y\) and write \(X \peq Y\) if they are equal in law. Let \(BL_1\) be the space of functions \(f:\mathbb{C}^d\rightarrow [-1, 1]\) that are Lipschitz continuous with constant at most 1. Let \(\mathcal{P}(\mathbb{C}^d, \mathcal{B}(\mathbb{C}^d))\) be the set of probability measures on \((\mathbb{C}^d, \mathcal{B}(\mathbb{C}^d))\). The bounded Lipschitz metric on \(\mathcal{P}(\mathbb{C}^d, \mathcal{B}(\mathbb{C}^d))\) is given by
\[
d_{BL}(\mu, \nu):=\sup_{f\in BL_1}\left|\int_{\mathbb{C}^d} f d\mu - \int_{\mathbb{C}^d} f d\nu\right|, \quad \mu, \nu\in \mathcal{P}(\mathbb{C}^d, \mathcal{B}(\mathbb{C}^d)).
\]
We use the shorthand \(d_{BL}(X, Y) = d_{BL}(P_X, P_Y)\) to denote the bounded Lipschitz metric between the laws of two random variables, \(X\) and \(Y\). It is well known that \(d_{BL}\) metrizes weak convergence which motivates the following definition of uniform convergence.

\begin{definition}[Uniform convergence] \label{def: unif_conv}
    Let \((X_{n,\theta})_{n\in\mathbb{N}, \theta\in\Theta}\) and \((Y_{n,\theta})_{n\in\mathbb{N}, \theta\in\Theta}\) be two sequences of families of random \(d\)-dimensional vectors defined on \((\Omega, \mathcal{F}, \mathbb{P})\) and indexed by some set \(\Theta\) (of possibly infinite dimension).
    \begin{enumerate}
        \item We say that \emph{\(X_{n, \theta}\) converges uniformly to \(Y_{n,\theta}\) over \(\Theta\) in distribution} (or, for short, \(X_{n,\theta}\rightarrow_w Y_{n, \theta}\) uniformly over \(\Theta\)) if 
        \[
        \lim_{n\rightarrow\infty}\sup_{\theta\in\Theta}d_{BL}\left(X_{n, \theta}, Y_{n, \theta}\right) = 0.
        \]
        \item We say that \emph{\(X_{n, \theta}\) converges uniformly to \(Y_{n,\theta}\) over \(\Theta\) in probability} (or, for short, \(X_{n,\theta}\rightarrow_p Y_{n, \theta}\) uniformly over \(\Theta\)) if, for every \(\epsilon > 0\),
        \[
        \lim_{n\rightarrow\infty}\sup_{\theta\in\Theta}\mathbb{P}\left(||X_{n, \theta} - Y_{n, \theta}|| > \epsilon\right) = 0.
        \]
    \end{enumerate}
\end{definition}

Uniform convergence could also be stated as convergence along all sub-sequences \(\theta_n\subset\Theta\) (see Definition 2 and Lemma 1 in \cite{kasy2019uniformity}). Additionally, we allow the limiting distribution to be a sequence, since the results below are stated in terms of an approximating sequence of random variables. We obtain the conventional notion by letting \(Y_{n, \theta} = Y_\theta\).

\subsection{Model}

Consider some \(\Theta \subset \mathbb{R}^{d\times d}\times S_d\times\mathbb{R}_+\). For any \(\theta\in\Theta\) there exist \(\Gamma_\theta, \Sigma_\theta\), and \(c_\theta\) such that \(\theta = (\Gamma_\theta, \Sigma_\theta, c_\theta)\). Let \(N_\theta\in\{1,...,d\}\) denote the number of distinct eigenvalues of \(\Gamma_\theta\) and \(\lambda_\theta\in\mathbb{C}^{N_\theta}\) the corresponding vector of ordered eigenvalues, that is, \(|\lambda_{\theta, 1}|\ge |\lambda_{\theta, 2}| \ge ... \ge |\lambda_{\theta, N_\theta}|\) with multiplicities \(m_{\theta, 1},..., m_{\theta, N}\in\{1,..., d\}\). Where this does not cause confusion, we omit the subscript \(\theta\).  Let \((X_{t, \theta})_{t\in\mathbb{N}, \theta\in\Theta}\) and \((\epsilon_{t, \theta})_{t\in\mathbb{N}, \theta\in\Theta}\) be two families of \(\mathbb{R}^d\)-valued stochastic processes and \((\mathcal{F}_{t,\theta})_{t\in\mathbb{N}}\) the filtration generated by \((\epsilon_{t, \theta})_{t\in\mathbb{N}}\).

\begin{manualtheorem}{M}\label{ass: M}
\(X_{t, \theta}\) and \(\epsilon_{t, \theta}\) satisfy the following:

\begin{enumerate}[label={M.\arabic*.}, ref={M.\arabic*}, align=left]
\item \label{ass: mart}%
    \(\epsilon_{t, \theta}\) is a stationary martingale difference sequence wrt. \(\mathcal{F}_{t, \theta}\), that is,
    \[
    \sup_{t\in\mathbb{N}, \theta\in\Theta} \mathbb{E}||\epsilon_{t, \theta}|| < \infty
    \]
    and \(\mathbb{E}(\epsilon_{t, \theta}|\mathcal{F}_{t-1, \theta})=\mathbb{E}\epsilon_{0, \theta}=0\) for all \(t\ge 1, \theta\in\Theta\).
\item \label{ass: cov}%
    For all \(\theta\in\Theta\), the conditional covariance matrix of \(\epsilon_{t, \theta}\) exists and is given by \(\mathbb{E}(\epsilon_{t, \theta}\epsilon_{t, \theta}^T|\mathcal{F}_{t-1, \theta})=\mathbb{E}\epsilon_{0, \theta}\epsilon_{0, \theta}^T=\Sigma_\theta\) a.s. for all \(t\ge 1\).
\item \label{ass: mom}%
    There exists some small \(\delta > 0\) such that \(\mathbb{E}||\epsilon_{t, \theta}||^{2+\delta} \le c_\theta\) a.s. for all \(t\in\mathbb{N}, \theta\in{\Theta}\).
\item \label{ass: var}%
    \(X_{t, \theta}\) is a VAR(1) process, that is, for all \(\theta\in\Theta\),
    \[
    X_{t, \theta} = \Gamma_\theta X_{t-1, \theta} + \epsilon_{t, \theta}
    \]
    for \(t\ge 1\) and \(X_{0, \theta}=0\).
\end{enumerate}
\end{manualtheorem}

These assumptions ensure that \(X_{t, \theta}\) is a VAR(1) process started at \(0\) with model parameters given by the index \(\theta\). Assumptions \ref{ass: mart} and \ref{ass: var} imply that \(X_{t, \theta}\) is adapted to \(\mathcal{F}_{t, \theta}\).

For future reference let us define the following set of \(d\times d\) matrices. For a given \(\delta > 0\), let \(\mathcal{J}_d(\delta)\subset\mathbb{C}^{d\times d}\) be the set of upper triangular matrices such that every \(J\in\mathcal{J}_d(\delta)\) can be decomposed as \(J=D+N\) with \(D\) diagonal such that \(|D_{11}|\ge |D_{22}|\ge\dots\ge |D_{dd}|\) and \(N\) equal to 0 everywhere except on the super-diagonal where it satisfies \(N_{i, i+1}\in\{0, 1\}\) if \(|D_{ii}|\le \delta\) and 0 otherwise for \(i=1,\dots, d-1\). In other words, every \(J\in\mathcal{J}_d(\delta)\) can be written as a block diagonal matrix where the upper left block is diagonal and contains all eigenvalues greater than \(\delta\) while the lower right block can have ones on the super-diagonal and has eigenvalues less than \(\delta\). We call the matrices in \(\mathcal{J}_d(\delta)\) \emph{Jordan-like}.

\begin{remark} \label{rem: jord_norm_form}
For any \(\theta\in\Theta\), there exist matrices \(F_\theta\in\mathbb{C}^{d\times d}\) and \(J_\theta\) such that \(J_\theta\) is a Jordan matrix and \(\Gamma_\theta = F_\theta J_\theta F_\theta^{-1}\). Up to reordering of the eigenvalues, the matrix \(J_\theta\) is unique and satisfies \(J_\theta\in \mathcal{J}_d(|\lambda_{\theta, 1}|)\). It is called the \emph{Jordan canonical form} of \(\Gamma_\theta\).
\end{remark}

\section{Asymptotic Properties} \label{sec: asym}

The key building blocks for inference are the two covariance matrices
\begin{equation}
\label{eq: cov}
S_{XX} = \frac{1}{n}\sum_{t=1}^n X_{t-1, \theta}X_{t-1, \theta}^T, \quad S_{X\epsilon} = \frac{1}{n}\sum_{t=1}^n X_{t-1, \theta}\epsilon_{t, \theta}^T.
\end{equation}
Obviously, \(S_{XX}\) and \(S_{X\epsilon}\) are families of stochastic processes depending on \(n\) and \(\theta\), which we suppress to avoid cluttering up the notation, but the dependence should be kept in mind. We first need to determine what happens to \(S_{XX}\) and \(S_{X\epsilon}\) when \(n\) goes to infinity and for varying \(\theta\). We cannot hope to say anything uniformly without further assumptions on \(\Theta\). The following assumptions are sufficiently general to cover a wide range of behaviours while still allowing for uniform asymptotic results.

\begin{manualtheorem}{U}\label{ass: U}
\(\Theta\) satisfies the following:
\begin{enumerate}[label={U.\arabic*.}, ref={U.\arabic*}, align=left]
\item \label{ass: c} %
    \(\sup_{\theta\in\Theta} c_\theta < \infty\).
\item \label{ass: sig}%
    \(\sup_{\theta\in\Theta}\{\sigma_{max}(\Sigma_\theta) + \sigma_{min}(\Sigma_\theta)^{-1}\} < \infty\).
\item \label{ass: eig}%
    There exists \(\alpha\in(0, 1)\) small so that with \(r_\alpha=(1-\alpha)(2-\alpha)/\alpha\) (see Fig. \ref{fig: eig})
    \[
    \sup_{\theta\in\Theta} \left\{\max_{1\le i\le N_{\theta}}\left[\max\left( \frac{||\lambda_{\theta, i}|(1-\lambda_{\theta, i})|}{r_\alpha(1-|\lambda_{\theta, i}|)}, \;|\lambda_{\theta, i}|\right)\right]\right\}\le 1.
    \]
\item \label{ass: jord}%
    There exists \(F_\theta\in\mathbb{C}^{d\times d}\) and \(J_{\theta}\in \mathcal{J}_d(1-\alpha)\) such that \(F_\theta^{-1}\Gamma_\theta F_\theta = J_\theta\) and 
    \[
    \sup_{\theta\in\Theta}\left\{\sigma_{max}\left(F_\theta\right) + \sigma_{min}\left(F_\theta\right)^{-1} + \sigma_{max}\left(J_\theta\right)\right\}<\infty.
    \]
\end{enumerate}

\end{manualtheorem}

Assumption \ref{ass: c} is a moment condition on the error process which is needed for some of the triangular array martingale difference limit results. It is implied by the other conditions if the errors are i.i.d. Gaussian. Assumption \ref{ass: sig} states that \(||\Sigma_\theta||\) and \(||\Sigma_\theta^{-1}||\) are uniformly bounded for any matrix norm. In particular, \(\Sigma_\theta\) is of full rank. This is a natural condition when considering uniform convergence. The important assumptions are \ref{ass: eig} and \ref{ass: jord}. Both assumptions have clear interpretations and are sufficient if we want to limit our attention to processes that are at most integrated of order 1 and without seasonal cointegration. In particular, to avoid higher orders of integration, we must restrict all eigenvalues to have magnitude less than 1 (ensured by Assumption \ref{ass: eig}, see Fig. \ref{fig: eig}). For eigenvalues with magnitude 1, the corresponding Jordan block must be scalar \citep{archontakis1998alternative} (ensured by Assumption \ref{ass: jord}). Note that the matrices \(J_\theta\) in Assumption \ref{ass: jord} are not required to be Jordan matrices so that the assumption allows, for example, for matrices of the form 
\[
\Gamma = \begin{pmatrix}
    \lambda & 1 \\
    0 & \lambda'
\end{pmatrix}
\]
for \(\lambda, \lambda' \in \mathbb{R}\) arbitrarily close together as long as \(|\lambda|, |\lambda'|\le 1 - \alpha\).

To avoid seasonal cointegration, eigenvalues with magnitude 1 are restricted to be exactly equal to 1. Specifically, as the magnitude of an eigenvalue approaches one, the eigenvalue itself approaches 1. See Fig. \ref{fig: eig} for the region of the complex plane satisfying Assumption \ref{ass: eig} for some small \(\alpha \in (0, 1)\). Since $\alpha$ can be chosen freely, the parameter space can be made arbitrarily close to the unit circle.

\begin{figure}
    \centering
    \includegraphics[width=200pt]{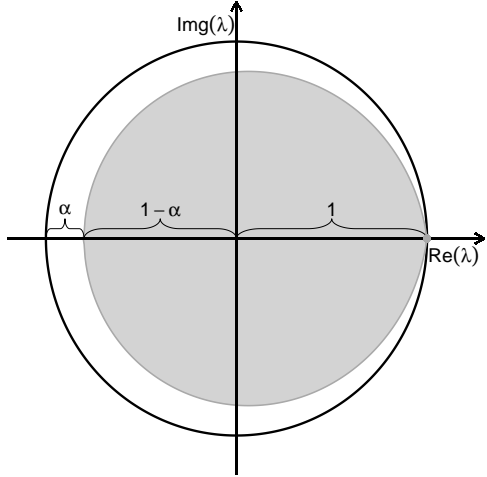}
    \caption{\small The region of the complex plane of allowed eigenvalues given by Assumption \ref{ass: eig}. The gray area includes all the eigenvalues allowed in the current setting. The black circle enclosing the grey area is the unit circle. The smallest allowed real eigenvalue is \(\alpha-1\) and the only eigenvalue with magnitude 1 is real and equal to 1.}
    \label{fig: eig}
\end{figure}

To state our main result, define \(M_i = \sum_{j\le i} m_j\) and write \(i_k = \min\{i \ge 1 | M_i \ge k\}\). The limiting behaviour of the \(k\)'th coordinate of \(X_{t, \theta}\) depends on how close the eigenvalue, \(\lambda_{i_k}\), is to 1. As in the univariate case \citep{mikusheva2007uniform}, we unify the range of asymptotics with an Ornstein-Uhlenbeck process. For any \(\theta\), we let \(C_n(\theta)\) be the \(d\times d\) diagonal matrix whose \(i\)'th diagonal block is \(n\log(|\lambda_i|)I_{m_i}\) with the convention that \(\log(0)=-\infty\). For convenience, we sometimes suppress the dependence on \(\theta\) and \(n\) and just write \(C\). In what follows, uniform convergence of random matrices means uniform convergence of the vectorization of these matrices so that Definition \ref{def: unif_conv} applies directly.

\begin{theorem}[Uniform convergence of covariance matrices] \label{thm: unif_app}
    Under Assumptions \ref{ass: M} and \ref{ass: U} and after possibly enlarging \((\Omega, \mathcal{F}, \mathbb{P})\), there exists a standard \(d\)-dimensional Brownian motion, \((W_t)_{t\in[0,1]}\), and a family of processes, \((J_{t, C})_{t\in[0,1], n\in\mathbb{N}, \theta\in\Theta}\), with 
    \begin{equation} \label{eq: JtC}
    J_{t, C} = \int_0^t e^{(t-s)C}F_{\theta}\Sigma^{\frac{1}{2}}dW_s, \quad J_{0, C} = 0,
    \end{equation}
    such that the following approximations hold for \(n\rightarrow\infty\)
    \begin{equation} \label{eq: unif_app_xx}
        H^{-\frac{1}{2}}F_{\theta}S_{XX}F_{\theta}^T H^{-\frac{1}{2}} \rightarrow_w G^{-\frac{1}{2}}\int_0^1 J_{t, C}J_{t, C}^T dt G^{-\frac{1}{2}},
    \end{equation}
    \begin{equation} \label{eq: unif_app_xe}
        \sqrt{n}H^{-\frac{1}{2}}F_{\theta} S_{X\epsilon} \rightarrow_w G^{-\frac{1}{2}}\int_0^1 J_{t, C} dW_t^T \Sigma^{\frac{1}{2}},
    \end{equation}
    uniformly over \(\Theta\) where the covariance matrices are defined in \eqref{eq: cov} and the normalizing matrices are given by
    \begin{equation} \label{eq: HandG}
    H = F_{\theta}\mathbb{E}\left(\frac{1}{n}\sum_{t=1}^n X_{t-1, \theta}X_{t-1, \theta}^T\right)F_{\theta}^T, \quad G = F_{\theta}\mathbb{E}\left(\int_0^1 J_{t, C} J_{t, C}^T dt\right)F_{\theta}^T.
    \end{equation}
\end{theorem}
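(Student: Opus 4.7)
The plan is to reduce the problem to its Gaussian-error analogue via a uniform strong approximation, handle the Gaussian case through a continuous-time embedding into an Ornstein--Uhlenbeck limit, and only then worry about uniformity via the subsequence characterization. First I would exploit the similarity $\Gamma_\theta = F_\theta J_\theta F_\theta^{-1}$ from Assumption \ref{ass: jord} to rewrite the moving-average representation $X_{t,\theta} = \sum_{k=1}^{t}\Gamma_\theta^{t-k}\epsilon_{k,\theta}$ in the rotated form driven by the Jordan-like matrix $J_\theta$. Because $J_\theta\in\mathcal{J}_d(1-\alpha)$, it splits into a diagonal block whose eigenvalues are close to $1$ (where $C_n(\theta)=n\log|J|$ stays bounded and the rescaled coordinates behave like near-integrated random walks) and a block whose eigenvalues are bounded away from $1$ (where the corresponding coordinates are asymptotically stationary and $C$ diverges to $-\infty$). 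This splitting is what lets a single OU integral $J_{t,C}$ cover all regimes: for bounded $C$ one recovers the classical near-integrated limit, while for diverging $C$ the integral concentrates in a $1/|C|$-neighbourhood of $t$ and reproduces the stationary covariance.

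Next I would establish a Gaussian reduction uniform in $\theta$, which the authors themselves flag as part of their contribution. Because Skorohod embedding is not available in dimension $d>1$, I would combine a Lindeberg-type swapping argument for Lipschitz functionals with a Koml\'os--Major--Tusn\'ady-type strong approximation for vector martingale differences, using Assumptions \ref{ass: cov}, \ref{ass: mom}, \ref{ass: c}, and \ref{ass: sig} to construct, after enlarging $(\Omega,\mathcal{F},\mathbb{P})$, a standard Brownian motion $W$ with a bound of the form
\[
\sup_{k\le n}\Bigl\|\sum_{t=1}^{k}\epsilon_{t,\theta}-\sqrt{n}\,\Sigma_\theta^{1/2}W_{k/n}\Bigr\|=o_p(\sqrt{n})
\]
and with the $o_p$-term uniform over $\theta\in\Theta$. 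The bounded-Lipschitz metric of Definition \ref{def: unif_conv} is ideally suited to absorb such a coupling because the functionals of interest---matrix products and stochastic integrals, read \emph{after} the $H^{-1/2}$, $G^{-1/2}$ rescalings---are Lipschitz on the bounded sets on which the normalized processes concentrate by the very definition of $H$ and $G$ in \eqref{eq: HandG}.

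Given this coupling, I would express $X_{t,\theta}$ through Abel summation as a discrete Riemann-sum approximation of the stochastic integral defining $J_{t,C_n(\theta)}$, so that $F_\theta X_{\lfloor ns\rfloor,\theta}$, after the coordinate-wise rescaling implicit in the $H^{-1/2}$ factor of \eqref{eq: unif_app_xx}, is approximated pathwise, uniformly in $s\in[0,1]$ and in $\theta\in\Theta$, by the correspondingly rescaled $J_{s,C_n(\theta)}$. Substituting this into $F_\theta S_{XX}F_\theta^T$ and $\sqrt{n}F_\theta S_{X\epsilon}$ and invoking continuous mapping for the Lipschitz functionals $g\mapsto\int_0^1 gg^T\,ds$ and $(g,W)\mapsto\int_0^1 g\,dW^T\Sigma^{1/2}$ then yields the two claimed approximations. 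Uniformity in the bounded-Lipschitz distance is finally obtained through the subsequence characterization of uniform weak convergence (Lemma~1 of \cite{kasy2019uniformity}): for an arbitrary sequence $\theta_n\in\Theta$ one may pass to a further subsequence along which the bounded tuple $(J_{\theta_n},F_{\theta_n},\Sigma_{\theta_n},C_n(\theta_n))$ converges, and the claim reduces to the pointwise convergence delivered by the preceding steps.

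The main obstacle I expect is the uniform Gaussian strong approximation itself: constructing a coupling whose error rate is uniform across an entire family of martingale difference arrays is substantially more delicate than the one-sequence KMT construction, and it is precisely the step the authors advertise as ``interesting in its own right''. A secondary, more book-keeping obstacle is that the eigenvalues of $\Gamma_\theta$ can drift between the ``near unity'' and ``bounded away from unity'' regimes as $\theta$ varies; the use of the broader class $\mathcal{J}_d(1-\alpha)$ in Assumption \ref{ass: jord}, rather than the true Jordan normal form, together with the uniform conditioning of $F_\theta$ in the same assumption, is what prevents the block decomposition from blowing up at the boundary and so keeps both the coupling and the continuous-mapping step uniform over $\Theta$.
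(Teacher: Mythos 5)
Your overall architecture (reduce to $F_\theta=I$, Gaussianize via a uniform strong invariance principle, represent the covariances through OU-type stochastic integrals, and control uniformity through subsequences/regimes) matches the paper's in outline, and the coupling $\sup_{k\le n}\|\sum_{t\le k}\epsilon_{t,\theta}-\sum_{t\le k}\rho_{t,\theta}\|=o_p(n^{1/2-\beta})$ you posit is exactly what the paper obtains from a martingale-array analogue of the Cuny--Dedecker--Merlev\`ede strong approximation. But your central step --- that $F_\theta X_{\lfloor ns\rfloor,\theta}$, after the $H^{-1/2}$ rescaling, is approximated \emph{pathwise, uniformly in $s$}, by the rescaled $J_{s,C_n(\theta)}$, followed by continuous mapping --- fails outside the near-unit-root regime, and this is where the real difficulty of the theorem lives. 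Take $d=1$, $\Gamma=\delta$ fixed in $(0,1)$. Then $C=n\log\delta$, the OU process $J_{t,C}$ decorrelates on the time scale $1/(n|\log\delta|)$, i.e.\ at or below the grid spacing $1/n$, and the Riemann-sum discretization error in replacing $\Gamma^{\lfloor nt\rfloor-\lceil nu\rceil}$ by $e^{n(t-u)\log\Gamma}$ is of the \emph{same order} as the process itself: one computes $H\approx\Sigma/(1-\delta^2)$ while $nG\approx\Sigma/(2|\log\delta|)$, so the two normalizations do not even agree asymptotically and no coupling can make $\sup_s$ of the difference small. Both sides of \eqref{eq: unif_app_xx} converge to $I$ in this regime only through a time-averaging (LLN) effect applied under \emph{different} normalizations, not through pathwise closeness. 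Relatedly, your global Gaussianization is unusable there: the coupling error $o_p(n^{1/2-\beta})$ for partial sums propagates to an error of the same order in $X_t$, which swamps the $O_p(1)$ signal when the eigenvalues are bounded away from one. The paper therefore confines the Gaussian reduction and the stochastic-integral representation to the region $R_{n,d}=\{|\lambda_N|\ge 1-n^{-\eta}\}$, treats the stationary region $R_{n,0}$ by a direct martingale LLN/CLT (Theorem \ref{thm: stat_asym}) together with a separate proof that the OU limit degenerates to the same $I$ and standard normal as $C_{ii}\to-\infty$ (Lemma \ref{lem: jc_stat}), and only then glues the regimes.

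Two further points. First, the mixed regime is not book-keeping: when some eigenvalues are near unity and others are not, one must show that the normalized cross-covariance blocks $H_{11}^{-1/2}S_{XX}^{12}H_{22}^{-1/2}$ vanish, that $H$ may be replaced by its block-diagonal part (the paper's Lemma \ref{lem: mixed_norm_app}, which requires a further partition of $R_{n,k}$ according to ratios of the gaps $1-|\lambda_i|$), and that the stationary-block CLT limit and the non-stationary-block OU limit hold \emph{jointly} with the right (asymptotically independent) structure. Your subsequence argument does not deliver this, because $C_n(\theta_n)$ has coordinates diverging to $-\infty$ at arbitrary, mutually incomparable rates and does not converge in any compact parameter space; the limit object changes discontinuously across regimes, which is precisely why the explicit overlapping-region decomposition is needed. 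Second, a smaller repair: $(g,W)\mapsto\int_0^1 g\,dW^T$ is not a continuous functional of the paths, so the continuous-mapping step for \eqref{eq: unif_app_xe} should be replaced, as in the paper, by a direct $L^2$ (It\^o isometry) bound on the difference of the stochastic integrals.
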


We prove the uniform results for the special case of $F_{\theta}$ being the identity, i.e., under the assumption that \(\Gamma\in \mathcal{J}_d(1 - \delta)\). Technically, this allows for complex-valued \(\Gamma\), thus, the proofs are more general than the real-valued case. Also, it is not hard to generalize to any $F_{\theta}$ fulfilling Assumption \ref{ass: jord}. Indeed, for \(X_{t, \theta}\) generated by \(\Gamma\in \mathbb{R}^{d\times d}\), there exist \(F\in \mathbb{C}^{d\times d}\) and \(J\in J_d(1 - \delta)\) such that \(F\) is invertible with \(F^{-1}JF = \Gamma\). The transformed process \(\tilde{X}_{t, \theta} = FX_{t, \theta}\) is then of the required form with parameters \(\tilde{\theta}=(J, F\Sigma F^T, ||F||^{2+\delta}c)\). Assuming that \(F\) is uniformly invertible and bounded in norm then ensures that \(\tilde{\theta}\) satisfies Assumption \ref{ass: U}.

We prove Theorem \ref{thm: unif_app} in several steps. The main idea is to split \(\Theta\) into overlapping regions and prove that Theorem \ref{thm: unif_app} holds in each region. Consider 
\[
R_{n, 0} := \left\{\theta\in\Theta : |\lambda_1| \le 1 - \frac{\log n}{n}\right\}, \quad R_{n, d}:= \left\{\theta\in\Theta : |\lambda_N| \ge 1 - n^{-\eta}\right\},
\]
where \(\eta\in(0, 1)\) is to be specified later. The two regions correspond to the stationary and local-to-unity (non-stationary) regimes, respectively, in the univariate case. Different asymptotics arise depending on the region and, in particular, on how fast the eigenvalues converge to unity. Throughout the rest of this section we assume that Assumptions \ref{ass: M} and \ref{ass: U} hold with \(F_\theta = I\).

\subsection{Non-stationary asymptotics} \label{sec: ltu}

In this section we consider sequences of parameters in the non-stationary region \(R_{n, d}\). For simplicity we assume throughout this subsection that \(\epsilon_{t, \theta}\) is i.i.d. Gaussian with mean 0 and covariance \(\Sigma_\theta\). In Appendix \ref{app: approx} it is argued why this is not a restriction. Indeed, all the relevant sample moments can be approximated by Gaussian counterparts.

Let \((W_{t})_{t\in[0,1]}\) be a standard \(d\)-dimensional Brownian motion. Since \(\Sigma^{\frac{1}{2}}(W_{t/n} - W_{(t-1)/n})\peq \epsilon_{t, \theta}/\sqrt{n}\) for all \(n\in\mathbb{N}\), \(0\le t \le n\) and \(\theta\in\Theta\), we get
\begin{equation} \label{eq: yr_int}
    H^{-\frac{1}{2}}S_{X\epsilon} \peq \int_0^{1}\int_0^t f(t, s, n, \theta) dW_s dW_t^T \Sigma^{\frac{1}{2}},
\end{equation}
\begin{equation} \label{eq: yy_int}
H^{-\frac{1}{2}}S_{XX} H^{-\frac{1}{2}} \peq  \int_0^1\left(\int_0^t f(t, s, n, \theta) dW_s\right)\left(\int_0^t f(t, s, n, \theta) dW_s\right)^T dt,
\end{equation}
where $H$ is defined in \eqref{eq: HandG} and \(f(t, s, n, \theta) =  \sqrt{n}H^{-\frac{1}{2}}\Gamma^{\lfloor nt \rfloor - \lfloor ns \rfloor - 1} \Sigma^{\frac{1}{2}}\mathbf{1}\{s \le \lfloor nt\rfloor / n\}\). We then see that the following Lemma is a direct consequence of Lemma \ref{lem: l2_app} in Appendix \ref{app: proofs}.

\begin{lemma} \label{lem: non_stat_app}
    For the covariance matrices \eqref{eq: cov} and $H, G$ and $J_{C,t}$ given in \eqref{eq: HandG} and \eqref{eq: JtC}, the following hold
    \begin{equation} \label{eq: non_stat_app_xx}
        \lim_{n\rightarrow\infty}\sup_{\theta\in R_{n, d}}d_{BL}\left(H^{-\frac{1}{2}}S_{XX}H^{-\frac{1}{2}}, G^{-\frac{1}{2}}\int_0^1 J_{C, t}J_{C, t}^T dt G^{-\frac{1}{2}}\right) = 0,
    \end{equation}
    \begin{equation} \label{eq: non_stat_app_xe}
        \lim_{n\rightarrow\infty}\sup_{\theta\in R_{n, d}}d_{BL}\left(\sqrt{n}H^{-\frac{1}{2}}S_{X\epsilon}, G^{-\frac{1}{2}}\int_0^1 J_{C, t} dW_t^T \Sigma^{\frac{1}{2}}\right) = 0.
    \end{equation}
\end{lemma}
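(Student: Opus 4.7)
The plan is a direct application of Lemma \ref{lem: l2_app} in Appendix \ref{app: proofs}. Since \(d_{BL}\) depends only on the laws of the random variables involved, the distributional identities \eqref{eq: yr_int} and \eqref{eq: yy_int} allow me to replace \(H^{-1/2}S_{XX}H^{-1/2}\) and \(\sqrt{n}\,H^{-1/2}S_{X\epsilon}\) on the left-hand sides of \eqref{eq: non_stat_app_xx} and \eqref{eq: non_stat_app_xe} by the corresponding iterated Wiener integrals driven by the same standard Brownian motion \(W\) that defines \(J_{C,t}\) in \eqref{eq: JtC}. The right-hand sides are of exactly the same form, with the discrete kernel \(f(t,s,n,\theta)\) replaced by the continuous-time kernel \(g(t,s,\theta) = G^{-1/2}e^{(t-s)C}\Sigma^{1/2}\mathbf{1}\{s\le t\}\), so that \(G^{-1/2}J_{C,t} = \int_0^t g(t,s,\theta)\,dW_s\).

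Lemma \ref{lem: l2_app} is designed precisely to convert a uniform \(L^2\) approximation of such kernels into simultaneous uniform \(d_{BL}\) approximations of the single and double stochastic integrals they define. The conversion rests on It\^o isometry, supplemented by a polarization argument for the quadratic functional of \eqref{eq: non_stat_app_xx}, together with the elementary bound \(d_{BL}(X,Y)\le \mathbb{E}(\|X-Y\|\wedge 2)\). Granted the conclusion of Lemma \ref{lem: l2_app}, the present statement follows at once.

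The main obstacle is thus entirely contained in verifying the hypotheses of Lemma \ref{lem: l2_app}, namely
\[
\lim_{n\to\infty}\sup_{\theta\in R_{n,d}}\int_0^1\int_0^t \|f(t,s,n,\theta) - g(t,s,\theta)\|^2\,ds\,dt = 0,
\]
together with a uniform bound on \(\int_0^1\int_0^t \|g(t,s,\theta)\|^2\,ds\,dt\). On \(R_{n,d}\) every eigenvalue satisfies \(|\lambda_i|\ge 1 - n^{-\eta}\), so the diagonal entries \(c_i = n\log|\lambda_i|\) of \(C\) are uniformly bounded and \(\lambda_i^k = \exp((k/n)\,c_i)\) is pointwise close to \(\exp((t-s)\,c_i)\) for \(k = \lfloor nt\rfloor - \lfloor ns\rfloor - 1\). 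A Jordan-block-by-block analysis, justified by Assumption \ref{ass: jord}, propagates this approximation from the diagonal to the full matrix power, while the remaining errors coming from the floor rounding and from comparing the finite-sample normalization \(H\) with its continuous-time analogue \(G\) are \(o(1)\) uniformly over \(R_{n,d}\). Dominated convergence, with bounds supplied by Assumptions \ref{ass: sig}, \ref{ass: eig}, and \ref{ass: jord}, then delivers the required uniform \(L^2\) approximation.
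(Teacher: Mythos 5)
Your proposal matches the paper's proof, which consists precisely of the observation that the distributional identities \eqref{eq: yr_int}--\eqref{eq: yy_int} (valid under the standing Gaussianity reduction of Section \ref{sec: ltu}) make the lemma a direct consequence of Lemma \ref{lem: l2_app}. The only slight misframing is that Lemma \ref{lem: l2_app} has no hypotheses left to verify --- its conclusion is already the uniform $L^2$ approximation of the iterated integrals, proved in Appendix \ref{app: proofs} --- and the kernel-comparison argument you sketch (matrix powers versus exponentials, floor rounding, and the $\|\sqrt{n}H^{-1/2}G^{1/2}-I\|\to 0$ step) is exactly the content of that appendix proof.
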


\subsection{Stationary asymptotics} \label{sec: stat}

In this section we consider sequences of parameters in the stationary region \(R_{n, 0}\). We first show that the classical asymptotic theory for stationary VAR(1) processes applies. Since \(R_{n, 0}\) and \(R_{n, d}\) overlap, we then prove that the right hand sides of \eqref{eq: unif_app_xx} and \eqref{eq: unif_app_xe} converge to the standard stationary limiting distributions for the diagonal entries \(C_{ii}\) going to \(-\infty\).

The standard stationary theory in multiple dimensions mimics the univariate case. We follow the same strategy as in \cite{phillips2007limit}, but allowing for multiple dimensions and a family of error processes \(\epsilon_{t, \theta}\). In this regime, we find that, when properly normalized, \(S_{XX}\) converges in probability to the identity matrix and \(\text{vec}(S_{Xe})\) converges in distribution to a \(d^2\)-dimensional standard Gaussian. 

\begin{theorem} \label{thm: stat_asym}
    Let \(V\sim \mathcal{N}(0, I_{d^2})\). For all \(\epsilon>0\) and \(s\in [0, 1]\),
    \begin{equation} \label{eq: stat_asym_xx}
        \lim_{n\rightarrow\infty}\sup_{\theta\in R_{n, 0}} \mathbb{P}\left(\left|\left|\frac{1}{n}H^{-\frac{1}{2}}\left(\sum_{t=1}^{\lfloor ns\rfloor}X_{t-1, \theta}X_{t-1, \theta}^T\right)H^{-\frac{1}{2}} - sI\right|\right| > \epsilon\right) = 0
    \end{equation}
    and
    \begin{equation} \label{eq: stat_asym_xe}
        \lim_{n\rightarrow\infty}\sup_{\theta\in R_{n, 0}} d_{BL}\left(\textnormal{vec}\left(\sqrt{n}H^{-\frac{1}{2}}S_{X\epsilon}\Sigma^{-\frac{1}{2}}\right), V\right) = 0.
    \end{equation}
\end{theorem}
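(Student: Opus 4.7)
The plan is to prove (\ref{eq: stat_asym_xx}) by a first-and-second-moment argument based on the moving-average representation $X_{t-1,\theta}=\sum_{j=1}^{t-1}\Gamma_{\theta}^{t-1-j}\epsilon_{j,\theta}$, and then to derive (\ref{eq: stat_asym_xe}) from a uniform triangular-array martingale central limit theorem whose conditional-variance hypothesis is supplied by (\ref{eq: stat_asym_xx}). Throughout, $F_\theta=I$ is assumed (as in the rest of Section \ref{sec: asym}), so $\Gamma_\theta\in\mathcal{J}_d(1-\alpha)$ is already Jordan-like.

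For (\ref{eq: stat_asym_xx}), a direct computation using \ref{ass: mart} and \ref{ass: cov} gives
\[
\mathbb{E}\left(\frac{1}{n}\sum_{t=1}^{\lfloor ns\rfloor}X_{t-1,\theta}X_{t-1,\theta}^T\right)=\frac{1}{n}\sum_{t=1}^{\lfloor ns\rfloor}V_{t,\theta},\qquad V_{t,\theta}=\sum_{k=0}^{t-2}\Gamma_\theta^k\Sigma_\theta(\Gamma_\theta^k)^T,
\]
so the $s=1$ case is exactly $H$. Since $t\mapsto V_{t,\theta}$ is increasing in the Loewner order with limit $V_\theta=\sum_{k\ge 0}\Gamma_\theta^k\Sigma_\theta(\Gamma_\theta^k)^T$, and since on $R_{n,0}$ the geometric bound $|\lambda_{\theta,1}|^n\le 1/n$ forces the tail $V_\theta-V_{t,\theta}$ to be negligible relative to $V_\theta$ once $t$ exceeds, say, $\sqrt{n}$, one obtains uniformly on $R_{n,0}$ that $H^{-1/2}V_{t,\theta}H^{-1/2}$ is close to the identity for most $t$, with only $o(n)$ boundary terms. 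Averaging yields $H^{-1/2}\mathbb{E}(\frac{1}{n}\sum_{t=1}^{\lfloor ns\rfloor}X_{t-1,\theta}X_{t-1,\theta}^T)H^{-1/2}=sI+o(1)$ uniformly. For the fluctuations I would plug the moving-average form into $X_{t-1,\theta}X_{t-1,\theta}^T$ and invoke \ref{ass: mom} to control the resulting fourth-order moments; after the two-sided $H^{-1/2}$ normalization, a factor of $\|V_\theta\|$ cancels, giving a variance of order $1/n$ uniformly on $R_{n,0}$. Chebyshev's inequality then delivers (\ref{eq: stat_asym_xx}).

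For (\ref{eq: stat_asym_xe}), I would write $\sqrt{n}\,\mathrm{vec}(S_{X\epsilon})=\frac{1}{\sqrt{n}}\sum_{t=1}^n \xi_{t,\theta}$ with $\xi_{t,\theta}=\epsilon_{t,\theta}\otimes X_{t-1,\theta}$. By \ref{ass: mart}, \ref{ass: cov}, and \ref{ass: var}, this is a martingale difference array with conditional covariance $\Sigma_\theta\otimes X_{t-1,\theta}X_{t-1,\theta}^T$. After premultiplying by $\Sigma_\theta^{-1/2}\otimes H^{-1/2}$, the averaged conditional covariance becomes $I_d\otimes\bigl(H^{-1/2}S_{XX}H^{-1/2}\bigr)$, which converges to $I_{d^2}$ uniformly in probability by (\ref{eq: stat_asym_xx}). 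A Lindeberg condition follows from \ref{ass: mom} together with a uniform bound on $\mathbb{E}\|H^{-1/2}X_{t-1,\theta}\|^{2+\delta}$ (again from the moving-average form and \ref{ass: sig}). Applying the uniform martingale CLT of the appendix on martingale limit results then yields (\ref{eq: stat_asym_xe}).

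The principal obstacle I anticipate is maintaining uniformity near the boundary $|\lambda_{\theta,1}|=1-\log n/n$: here $\|V_\theta\|$ can grow as large as $n/\log n$, so every estimate must be scale-invariant after normalization by $H$. The tools that make this work are the pair of uniform bounds $\sigma_{\min}(H)\ge \frac{n-1}{n}\sigma_{\min}(\Sigma_\theta)$ (from retaining only the $k=0$ term of each $V_{t,\theta}$) and $\|H\|\le\|\Sigma_\theta\|\sum_{k\ge 0}\|\Gamma_\theta^k\|^2$, which together with \ref{ass: sig} and \ref{ass: jord} keep the relevant spectral ratios controlled uniformly on $R_{n,0}$, and also guarantee that the polynomial factors coming from the super-diagonal of Jordan blocks with $|\lambda_{\theta,i}|\le 1-\alpha$ are dominated by the geometric decay.
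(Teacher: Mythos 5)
Your treatment of \eqref{eq: stat_asym_xe} matches the paper's: both form the martingale difference array $\epsilon_{t,\theta}\otimes X_{t-1,\theta}$ normalized by $\Sigma^{-1/2}\otimes H^{-1/2}$, feed the conditional-variance condition from \eqref{eq: stat_asym_xx}, and verify Lindeberg via \ref{ass: mom} before invoking the array CLT. The gap is in your argument for \eqref{eq: stat_asym_xx}. You propose to control the fluctuations of $\frac1n\sum_t X_{t-1,\theta}X_{t-1,\theta}^T$ by a direct variance computation from the moving-average representation and then apply Chebyshev, citing \ref{ass: mom} "to control the resulting fourth-order moments." But \ref{ass: mom} is only a $(2+\delta)$-moment bound with $\delta$ small; the variance of $X_{t-1}X_{t-1}^T$ expands into terms of the form $\mathbb{E}[\epsilon_{j_1}\epsilon_{j_2}^T\epsilon_{j_3}\epsilon_{j_4}^T]$, i.e.\ genuine fourth moments of the errors, which are not assumed finite. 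This step therefore fails as stated, and it is precisely the obstacle the paper's proof is built to avoid: it uses the algebraic recursion $\tilde S_{XX}=\Gamma\tilde S_{XX}\Gamma^T+s\Sigma-S_n$ (iterated) so that the stochastic remainder consists only of (i) the positive semidefinite term $X_{n}X_n^T/n$, controlled through the trace of its expectation (second moments suffice, Lemma \ref{lem: stat_helper}), (ii) $\frac1n\sum_t(\epsilon_t\epsilon_t^T-\Sigma)$, a martingale difference average with $(1+\delta/2)$ moments handled by the weak LLN of Theorem \ref{thm: mda_wlln}, and (iii) the cross terms $S_{X\epsilon}$, whose second moment is computed exactly by martingale orthogonality. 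Everything is then closed with Lemma \ref{lem: norm_exp}. To salvage your route you would either need to add a fourth-moment assumption or introduce a truncation argument, neither of which you indicate.

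A secondary, smaller point: even granting fourth moments, your claim that the normalized variance is $O(1/n)$ uniformly on $R_{n,0}$ is too optimistic. Near the boundary $|\lambda_{\theta,1}|=1-\log n/n$ the effective decorrelation time of $X_{t}X_t^T$ is of order $n/\log n$, so after the two-sided $H^{-1/2}$ normalization the variance is only $O(1/\log n)$. That is still $o(1)$, so Chebyshev would still close the argument, but the rate you assert is not the one you would actually obtain, and it signals that the boundary of $R_{n,0}$ is being treated too casually. Your first-moment computation (that $H^{-1/2}\mathbb{E}(\tilde S_{XX})H^{-1/2}\to sI$ uniformly) is essentially the content of Lemma \ref{lem: norm_exp} together with parts \ref{lem: norm_asym_e}--\ref{lem: norm_asym_f} of Lemma \ref{lem: norm_asym}, and that part of your outline is sound.
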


For the special case \(s=1\), equation \eqref{eq: stat_asym_xx} shows that \(S_{XX}\) converges in probability to the identity matrix. By Theorem \ref{thm: stat_asym} and Proposition 8 in the supplementary material for \cite{lundborg2021conditional}, the proof of Theorem \ref{thm: unif_app} in the stationary regime is complete if we can show that, for any \((\theta_n\in R_{n, 0})_{n\in\mathbb{N}}\),
\begin{equation} \label{eq: jc_stat_xx}
    G^{-\frac{1}{2}}\int_0^1 J_{t, C}J_{t, C}^T dt G^{-\frac{1}{2}} \rightarrow_w I,
\end{equation}
\begin{equation} \label{eq: jc_stat_xe}
    G^{-\frac{1}{2}}\int_0^1 J_{t, C} dW_t^T \rightarrow_w N.
\end{equation}
We emphasize that \(G\) and \(C\) in eqs. \eqref{eq: jc_stat_xx}-\eqref{eq: jc_stat_xe} are functions of \(\theta_n\) and therefore they are sequences of matrices. In particular, \(C_{ii}\le n\log(1-\log(n)/n)\rightarrow -\infty\) for \(n\rightarrow \infty\) and \(1\le i \le d\). Eqs. \eqref{eq: jc_stat_xx}-\eqref{eq: jc_stat_xe} are therefore a consequence of the following.

\begin{lemma} \label{lem: jc_stat}
    Let \((C_n)_{n\in\mathbb{N}}, (\Omega_n)_{n\in\mathbb{N}}\subset \mathbb{R}^{d\times d}\) be sequences of matrices such that \(C_n\) is diagonal, \((C_n)_{ii}\rightarrow - \infty\) for \(n\rightarrow \infty\) and \(1\le i \le d\), and \(\Omega_n\) is positive definite with singular values bounded from below and above uniformly over \(n\). Let \((W_t)_{t\in [0, 1]}\) be a standard \(d\)-dimensional Brownian motion and define the family of \(d\)-dimensional Ornstein-Uhlenbeck processes, \((J_{t, n})_{t\in[0,1], n\in \mathbb{N}}\), given by
    \[
    J_{t, n} = \int_0^t e^{(t-s)C_n}\Omega_n^{\frac{1}{2}}dW_s, \quad J_{0, n} = 0,
    \]
    along with the normalizing matrices \(G_n = \mathbb{E}\left(\int_0^1 J_{t, n} J_{t, n}^T dt\right)\). Then, for \(n\rightarrow \infty\),
    \[
    G_n^{-\frac{1}{2}}\int_0^1 J_{t, n}J_{t, n}^T dt G_n^{-\frac{1}{2}} \rightarrow_p I,
    \]
    \[
    \text{vec}\left(G_n^{-\frac{1}{2}}\int_0^1 J_{t, n} dW_t^T\right) \rightarrow_w V,
    \]
    where \(V\sim \mathcal{N}(0, I_{d^2})\).
\end{lemma}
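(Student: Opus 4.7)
\emph{Plan and setup.} The strategy is to use Gaussianity of $J_{t,n}$ (since $W$ is a Brownian motion and $C_n$, $\Omega_n$ are deterministic) together with the strong contractivity of the drift $C_n$ to reduce everything to second-moment calculations. A direct Itô computation gives
\[
(\mathbb{E} J_{t,n}J_{t,n}^T)_{ik} = (\Omega_n)_{ik}\,\frac{1-e^{(c_{n,i}+c_{n,k})t}}{-(c_{n,i}+c_{n,k})},
\]
and integrating in $t$ yields an explicit form for $G_n$. Combined with the uniform bounds on the singular values of $\Omega_n$, this shows $G_n$ is positive definite for all $n$ large enough, so $G_n^{-1/2}$ is well-defined. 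The two statements are then proved separately, with the second leveraging the first.

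\emph{First statement: variance bound.} Writing $A_n := \int_0^1 J_{t,n}J_{t,n}^T dt$, we have $\mathbb{E} A_n = G_n$ by construction, so $\mathbb{E}[G_n^{-1/2}A_nG_n^{-1/2}]=I$ exactly; convergence in probability then reduces to showing entrywise vanishing of the variance. Wick's formula gives
\[
\mathrm{Var}((A_n)_{ij}) = \int_0^1\!\!\int_0^1\!\!\left[\mathbb{E}J_t^{(i)}J_s^{(i)}\,\mathbb{E}J_t^{(j)}J_s^{(j)} + \mathbb{E}J_t^{(i)}J_s^{(j)}\,\mathbb{E}J_t^{(j)}J_s^{(i)}\right]dt\,ds,
\]
and the Markov property of the OU process yields, for $t\ge s$, the identity $\mathbb{E}J_t^{(i)}J_s^{(k)} = e^{(t-s)c_{n,i}}(\mathbb{E}J_sJ_s^T)_{ik}$, hence exponential decay at rate $\min_i|c_{n,i}|$ in $|t-s|$. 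Integrating over $[0,1]^2$ then produces an extra factor of order $1/(|c_{n,i}|+|c_{n,j}|)$ relative to the natural scale $(G_n)_{ii}(G_n)_{jj}$; propagating this through the sums that define each entry of $G_n^{-1/2}A_nG_n^{-1/2}$ and invoking Chebyshev's inequality gives the claim.

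\emph{Second statement: continuous martingale CLT.} The matrix $M_n := \int_0^1 J_{t,n}\,dW_t^T$ is entrywise a continuous martingale in the Brownian filtration, and its quadratic covariation satisfies
\[
[(G_n^{-1/2}M_n)_{ij},\,(G_n^{-1/2}M_n)_{kl}]_1 = \delta_{jl}\left(G_n^{-1/2}A_nG_n^{-1/2}\right)_{ik},
\]
which, by the first part, converges in probability to $\delta_{jl}\delta_{ik}$. Equivalently, the quadratic covariation matrix of $\mathrm{vec}(G_n^{-1/2}M_n)$ converges to $I_{d^2}$ in probability. The multivariate continuous martingale central limit theorem then yields the desired weak convergence to $V$.

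\emph{Main obstacle.} The principal technical difficulty is anisotropy: different entries of $C_n$ may diverge to $-\infty$ at wildly different rates, so $G_n$ can have unbounded condition number and $G_n^{-1/2}$ can blow up along different eigendirections at very different speeds. Making the entrywise variance bound in the second paragraph tight enough to absorb these potentially divergent normalization factors is the delicate point. I would handle this by first conjugating by the diagonal matrix $\mathrm{diag}(|c_{n,i}|^{1/2})$: in these rescaled coordinates, the leading part of $G_n$ is the Hadamard product of $\Omega_n$ with the Cauchy-like kernel $\sqrt{|c_{n,i}||c_{n,j}|}/(|c_{n,i}|+|c_{n,j}|)$, a bounded positive-definite matrix whose inverse stays controlled, and in which the uniform variance estimates can be carried out cleanly.
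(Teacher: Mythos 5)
Your proposal is correct in outline but takes a genuinely different route from the paper's. The paper argues indirectly: it constructs discrete-time VAR(1) parameters whose eigenvalues \(e^{(C_n)_{ii}/k_n}\) are engineered to lie simultaneously in the stationary region \(R_{k,0}\) and the non-stationary region \(R_{k,d}\), so that the \(L^2\)-approximation of Lemma \ref{lem: l2_app} identifies the normalized OU functionals as limits of the sample covariances \(S_{XX}\) and \(S_{X\epsilon}\), while Theorem \ref{thm: stat_asym} forces those same sample covariances to converge to \(I\) and a standard Gaussian; the triangle inequality then transfers the limit to the continuous-time objects. You instead work entirely in continuous time: exact computation of \(G_n\), Wick's formula plus the Markov property of the OU process for the variance of \(\int_0^1 J_{t,n}J_{t,n}^T dt\), and the continuous martingale CLT via the quadratic covariation identity. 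Your device of conjugating by \(D_n=\mathrm{diag}(|c_{n,i}|^{1/2})\) is exactly what is needed to handle the anisotropy: the crude bound \(\|G_n^{-1/2}\|_2^2\le C\max_i|c_{n,i}|\) does not suffice when the \(|c_{n,i}|\) diverge at different rates, whereas \(\tilde G_n=D_nG_nD_n\) is, up to vanishing errors, the Hadamard product of \(\Omega_n\) with the normalized Cauchy kernel and hence uniformly well-conditioned by the Schur product theorem (\(\lambda_{min}(A\circ B)\ge\lambda_{min}(A)\min_i B_{ii}\)). Note that you do not even need entrywise control of \(G_n^{-1/2}\): the bound \(\|G_n^{-1/2}(A_n-G_n)G_n^{-1/2}\|\le\|\tilde G_n^{-1}\|_2\,\|D_n(A_n-G_n)D_n\|\) closes the first claim directly from the rescaled variance estimate. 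Two small points to nail down in a full write-up: justify that the martingale CLT you invoke applies with only terminal quadratic covariation converging to a deterministic matrix (for continuous local martingales this follows from Dambis--Dubins--Schwarz and Cram\'er--Wold, or one can check convergence of the covariation process at every \(t\), which your variance bound on \([0,t]\) gives). On balance, the paper's proof recycles machinery it has already built and avoids Gaussian moment calculations but is opaque and tied to the overlap of its regions; yours is self-contained, more transparent, and makes the rate of degeneration (of order \(\min_i|c_{n,i}|^{-1}\)) explicit.
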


\subsection{Mixed asymptotics} \label{sec: mixed}

So far, all the eigenvalues were in the same regime. We have yet to explore what happens when there are eigenvalues in both regimes. We call this case the mixed regime. Define for \(1\le k \le d-1\) and some fixed \(\gamma\in (0, 1-\eta)\) the sets
\[
R_{n, k} = \left\{\theta\in\Theta : M_{i_k} = k, |\lambda_{i_k}| \ge 1 - n^{-\eta - \gamma}, |\lambda_{i_{k+1}}|\le 1 - n^{-\eta - \gamma}\right\}.
\]
Since \(1-n^{-\eta} \le 1 - n^{-\eta - \gamma} \le 1 - \log(n)/n\), then for \(\theta\in R_{n, k}\) there are at least \(k\) coordinates in the non-stationary regime and \(d-k\) coordinates in the stationary regime (and some might be in both). Furthermore, for any \(n\in \mathbb{N}\), \(\Theta = \bigcup_{0\le k\le d} R_{n, k}.\) Thus, showing that \eqref{eq: unif_app_xx} and \eqref{eq: unif_app_xe} hold uniformly over \(R_{n, k}\) for any fixed \(1\le k\le d-1\) completes the proof of Theorem \ref{thm: unif_app}. This is the content of the following lemma proved in Appendix \ref{app: mix}.

\begin{lemma} \label{lem: mixed_app}
    Let \(1\le k\le d-1\). We have
    \begin{equation} \label{eq: mixed_app_xx}
        \lim_{n\rightarrow\infty}\sup_{\theta\in R_{n, k}}d_{BL}\left(H^{-\frac{1}{2}}S_{XX}H^{-\frac{1}{2}}, G^{-\frac{1}{2}}\int_0^1 J_{C, t}J_{C, t}^T dt G^{-\frac{1}{2}}\right) = 0,
    \end{equation}
    \begin{equation} \label{eq: mixed_app_xe}
        \lim_{n\rightarrow\infty}\sup_{\theta\in R_{n, k}}d_{BL}\left(\sqrt{n}H^{-\frac{1}{2}}S_{X\epsilon}, G^{-\frac{1}{2}}\int_0^1 J_{C, t} dW_t^T \Sigma^{\frac{1}{2}}\right) = 0.
    \end{equation}
\end{lemma}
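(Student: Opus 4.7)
The plan rests on the block decomposition of $\Gamma$ afforded by Assumption~\ref{ass: jord}. For $\theta \in R_{n,k}$ and $n$ large, the $k$ largest eigenvalues satisfy $|\lambda_{i_k}| \ge 1 - n^{-\eta-\gamma} > 1-\alpha$, so by the definition of $\mathcal{J}_d(1-\alpha)$ they sit on the diagonal of $J_\theta$ with no Jordan super-diagonal entries. Hence $\Gamma$ partitions as $\Gamma = \text{diag}(\Gamma_1, \Gamma_2)$, where $\Gamma_1 \in \mathbb{C}^{k\times k}$ is diagonal with eigenvalues tending to unity and $\Gamma_2 \in \mathbb{C}^{(d-k)\times(d-k)}$ is upper triangular with spectral radius at most $1-n^{-\eta-\gamma}$. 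Consequently $X_t = ((X_t^{(1)})^T,(X_t^{(2)})^T)^T$, and every sample covariance inherits a $2\times 2$ block structure. I would first invoke the Gaussian approximation of Appendix~\ref{app: approx} to reduce to i.i.d.\ Gaussian $\epsilon_t$, so that the stochastic-integral representations \eqref{eq: yr_int}--\eqref{eq: yy_int} are available.

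Next I would carry out a careful scale analysis of $H$. Using $\|\Gamma_1^s\| = O(1)$ and $\|\Gamma_2^s\| = O(s^{d-k}(1-n^{-\eta-\gamma})^s)$ (the polynomial factor absorbing possible Jordan blocks in $\Gamma_2$), one obtains that $H^{(11)}$ grows linearly in $n$, while $H^{(22)} = O(n^{\eta+\gamma})$ (the geometric tail has effective length of order $n^{\eta+\gamma}$) and $H^{(12)} = O(n^{\eta+\gamma})$. The crucial arithmetic inequality is $n^{\eta+\gamma} = o(\sqrt{n \cdot n^{\eta+\gamma}})$, which uses precisely $\gamma < 1-\eta$. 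A Schur-complement argument then shows that $H^{-1/2}$ is block diagonal to leading order, with off-diagonal pieces of negligible size compared with the diagonal blocks.

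With this decoupling, the diagonal blocks of $H^{-1/2}S_{XX}H^{-1/2}$ can be treated separately: the $(1,1)$ block reduces to Lemma~\ref{lem: non_stat_app} applied to the $k$-dimensional subprocess $X^{(1)}$ (whose restricted parameters lie in the fully non-stationary region for the $k$-dimensional subsystem), while the $(2,2)$ block is handled by Theorem~\ref{thm: stat_asym} applied to $X^{(2)}$ (whose restricted parameters lie in $R_{n,0}$ for the $(d-k)$-dimensional subsystem, since $n^{-\eta-\gamma} \ge \log(n)/n$ eventually) and converges in probability to $I_{d-k}$. The cross block $(H^{-1/2}S_{XX}H^{-1/2})^{(12)}$ is shown to be $o_p(1)$ uniformly using Cauchy--Schwarz combined with the scale arithmetic. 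The same scheme handles $\sqrt{n}H^{-1/2}S_{X\epsilon}$ and gives \eqref{eq: mixed_app_xe}. Finally I would verify that the right-hand limit decomposes similarly: $J_{t,C}$ splits along the block structure of $C$, the stationary diagonal block of $G^{-1/2}\int_0^1 J_{t,C}J_{t,C}^T dt\, G^{-1/2}$ converges to the identity by Lemma~\ref{lem: jc_stat} (as the diagonal of $C_2$ tends to $-\infty$), and the off-diagonal limiting blocks vanish by the same scale separation.

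The principal obstacle is quantifying the off-diagonal contributions uniformly over $R_{n,k}$. Since the eigenvalues of $\Gamma_2$ may lie only $n^{-\eta-\gamma}$ away from the unit circle, the geometric tails decay slowly and the cross-process correlations mediated by $\Sigma^{(12)}$ are not easy to control outright. Ensuring that the off-diagonal bounds are $o_p(1)$ \emph{uniformly}, and that the Schur-complement correction to $H^{-1/2}$ does not reintroduce leading-order cross contributions in the normalized covariances, is the main technical work.
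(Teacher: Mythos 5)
Your overall skeleton (Gaussian reduction, block decomposition of $\Gamma$ and of the covariances, non-stationary analysis for the upper block, Theorem~\ref{thm: stat_asym} for the lower block, vanishing cross terms, and the matching decomposition of the limit via Lemma~\ref{lem: jc_stat}) agrees with the paper's. But there is a genuine gap at the step you yourself flag as ``the main technical work'': the scale analysis of $H$ that you propose is false on $R_{n,k}$, and the fixed cut at position $k$ cannot work. The definition of $R_{n,k}$ only requires $|\lambda_{i_k}|\ge 1-n^{-\eta-\gamma}$ and $|\lambda_{i_{k+1}}|\le 1-n^{-\eta-\gamma}$; it allows, say, $1-|\lambda_{i_k}|=n^{-\eta-\gamma}$ and $1-|\lambda_{i_{k+1}}|=2n^{-\eta-\gamma}$ simultaneously. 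In that case $H^{(11)}$ does \emph{not} grow linearly in $n$ (its smallest diagonal entries are of order $n^{\eta+\gamma}$, since an eigenvalue at distance $n^{-\eta-\gamma}$ from unity contributes $\min(n,(1-|\lambda|)^{-1})\asymp n^{\eta+\gamma}$), and $H^{(11)}$, $H^{(12)}$, $H^{(22)}$ are all of the same order $n^{\eta+\gamma}$. Your ``crucial arithmetic inequality'' $n^{\eta+\gamma}=o(\sqrt{n\cdot n^{\eta+\gamma}})$ then compares the wrong quantities, the Schur-complement argument gives only $H_{11}^{-1/2}H_{12}H_{22}^{-1/2}=O(1)$ rather than $o(1)$, and the asserted leading-order block-diagonality of $H^{-1/2}$ fails. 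So the decoupling of the two regimes does not hold uniformly over $R_{n,k}$ with the cut placed at $k$.

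The missing idea is a further partition of $R_{n,k}$ according to where a genuine multiplicative gap in the scales $1-|\lambda_{i_{k+j}}|$ occurs. The paper writes $R_{n,k}=\bigcup_{j=0}^{r}U_{n,j}$ with $r=d-k$, where $U_{n,j}$ requires $w(j,j+1)=(1-|\lambda_{i_{k+j}}|)/(1-|\lambda_{i_{k+j+1}}|)\le n^{-\gamma/r}$ together with $w(0,j)\ge n^{-j\gamma/r}$; a pigeonhole argument over the $r$ lower eigenvalues shows every $\theta$ lands in some $U_{n,j}$. On $U_{n,j}$ the cut is moved to position $k+j$: all eigenvalues above the cut satisfy $1-|\lambda|\le n^{-\eta}$ (so the non-stationary analysis applies to a $(k+j)$-dimensional upper block, not a $k$-dimensional one), all eigenvalues below satisfy $1-|\lambda|\le 1-\log(n)/n$... i.e.\ lie in the stationary regime, and --- crucially --- the enforced gap yields $\|H_{11}^{-1/2}H_{12}H_{22}^{-1/2}\|=O\bigl((1-|\lambda_{i_{k+j}}|)/(1-|\lambda_{i_{k+j+1}}|)\bigr)^{1/2}=O(n^{-\gamma/(2r)})\to 0$ uniformly, which is exactly the decoupling your argument needs. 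A secondary point in the same direction: the lower block $H^{(22)}$ is itself multi-scale (entries from $O(1)$ up to $O(n^{\eta+\gamma})$), so the paper controls it through the diagonal rescaling $\Lambda_{ll}=1-|\lambda_{i_{k+j+l}}|$ rather than treating it as a single scale $n^{\eta+\gamma}$ as you do. Without the $U_{n,j}$ partition (or an equivalent device locating the spectral gap), the proof as proposed does not go through.
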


\section{Uniform Inference} \label{sec: inf}

Having established the asymptotic properties of \(S_{XX}\) and \(S_{X\epsilon}\), we now seek to develop uniformly valid methods of inference. We focus on two important cases in which uniformly valid inference has so far proven challenging: predictive regression testing and coordinate confidence intervals. Inference in these settings can be hard even from a point-wise perspective since the presence of exact unit roots makes it problematic to construct test statistics with standard asymptotic distributions.

It is not trivial to conduct inference on \(\Gamma\) even in lieu of Theorem \ref{thm: unif_app}. The main problem is the presence of the nuisance parameter \(C_n(\theta)\) in \eqref{eq: unif_app_xx}-\eqref{eq: unif_app_xe}, which cannot be uniformly consistently estimated. Indeed, for a sequence \(\Gamma_n=I - C/n\) where the real part of the eigenvalues of \(C\in \mathbb{R}^{d\times d}\) are all strictly negative, the problem is essentially equivalent to estimating \(C = n(I - \Gamma)\). But it is well known that, in this setting, \(\Gamma\) can only be estimated at rate \(O(n^{-1})\). One way to solve this is by the use of test inversion or so-called grid bootstrap methods, which have been widely applied in the unitary case (see \cite{hansen1999grid, mikusheva2007uniform} for grid bootstrap and \cite{campbell2006efficient, phillips2014confidence} for an application to predictive regression). While this is fairly easy in one dimension, adapting these methods to vector autoregressive processes is prohibitive since the computational complexity quickly explodes. We now present an approach, which keeps the computational burden to a minimum. We omit the dependence on \(\theta\) in the subscript of all random variables.

Consider the least squares estimator, \(\hat{\Gamma}\), given by
\[
\hat{\Gamma} = \frac{1}{n}\sum_{t=1}^n X_{t}X_{t-1}^T\left(\frac{1}{n}\sum_{t=1}^n X_{t-1}X_{t-1}^T\right)^{-1} = \Gamma + S_{X \epsilon}^T S_{XX}^{-1}.
\]
It follows from Theorem \ref{thm: unif_app} that \(\hat{\Gamma}\) is a uniformly consistent estimator of \(\Gamma\) with a rate of convergence depending on the proximity of the eigenvalues of \(\Gamma\) to one. Indeed, since \(\sqrt{n}(\hat{\Gamma} - \Gamma)=(\sqrt{n} S_{X\epsilon})^T S_{XX}^{-1}\), we find that \(n(\hat{\Gamma} - \Gamma)S_{XX}(\hat{\Gamma}-\Gamma)^T = O_p(1)\) which implies that \(\sqrt{n}(\hat{\Gamma} - \Gamma) = O_p(1)\). We define a uniformly consistent estimator of \(\Sigma\) by averaging the squared residuals, i.e., with \(\hat{\epsilon}_t = X_t - \hat{\Gamma}X_{t-1}\) we define \(\hat{\Sigma} = S_{\hat{\epsilon}\hat{\epsilon}}\) analogously to \(S_{XX}\) with \(X_{t-1}\) replaced by \(\hat{\epsilon}_t\). Another consequence of Theorem \ref{thm: unif_app} is a uniform approximation of the \(t^2\)-statistic. In particular, 
\begin{multline} \label{eq: t_gam}
\hat{t}_\Gamma^2 = \tr\left(n\hat{\Sigma}^{-\frac{1}{2}}\left(\hat{\Gamma} - \Gamma\right)S_{XX}\left(\hat{\Gamma} - \Gamma\right)^T\hat{\Sigma}^{-\frac{1}{2}}\right) \\
\rightarrow_w \tr\left(\left(\int_0^1 \hat{J}_{t, C} dW_t^T\right)^T\left(\int_0^1 \hat{J}_{t, C}\hat{J}_{t, C}^T dt\right)^{-1}\int_0^1 \hat{J}_{t, C} dW_t^T \right) := t^2_\Gamma
\end{multline}
uniformly over \(\Theta\) where \(C = C_n(\theta)\) is given in Section \ref{sec: asym} and \(\hat{J}_{t, C}\) is defined analogously to \(J_{t, C}\) but with \(\Sigma\) replaced by the consistent estimator \(\hat{\Sigma}\). Thus, for a fixed significance level, \(\alpha\in (0, 1)\), a uniformly valid \(100(1-\alpha)\%\) confidence region for \(\Gamma\) can be constructed by test-inversion. Let \(q_{n, \Gamma}(1-\alpha)\) denote the \(100(1-\alpha)\%\) quantile\footnote{Note that the quantile depends on \(\Gamma\) (which is also made explicit in the notation) since the asymptotic distribution depends on \(C\) which is a function of \(\Gamma\) and \(n\).} of \(t^2_\Gamma\) and define the confidence region
\begin{equation} \label{eq: cr_a}
    CR_a(\alpha) = \left\{\Gamma : \hat{t}^2_\Gamma \le q_{n, \Gamma}(1-\alpha)\right\}.
\end{equation}
However, the distribution of \(t^2_\Gamma\) is non-standard and, therefore, computing the quantiles \(q_{n, \Gamma}\) requires extensive simulations and can be quite expensive. 

Another approach relies on the Gaussian approximations (Appendix \ref{app: approx}). It is similar to \emph{Andrew's Method} (see \cite{mikusheva2007uniform}), and similar in spirit to grid bootstrap. It was originally suggested by \cite{andrews1993exactly} but has so far only been applied in the univariate case. For a given \(\Gamma\), define the \(VAR(1)\) process, \((Y_t)_{t\in \mathbb{N}}\), by
\[
Y_t = \Gamma Y_{t-1} + e_t, \quad Y_0 = 0
\]
where \(e_t\sim \mathcal{N}(0, \hat{\Sigma})\) i.i.d. Let
\[
\tilde{t}^2_\Gamma = \tr\left(n\hat{\Sigma}^{-\frac{1}{2}}S_{eY} S_{YY}^{-1} S_{eY}^T\hat{\Sigma}^{-\frac{1}{2}}\right)
\]
and denote by \(\tilde{q}_{n, \Gamma}(1-\alpha)\) the \(100(1-\alpha)\%\) quantile of \(\tilde{t}^2_\Gamma\). A confidence region for \(\Gamma\) is then obtained by
\begin{equation} \label{eq: cr_b}
CR_b(\alpha) = \left\{\Gamma : \hat{t}^2_\Gamma \le \tilde{q}_{n, \Gamma}(1-\alpha)\right\}. 
\end{equation}
The distribution of \(\tilde{t}^2_\Gamma\) is still non-standard, but the quantiles \(\tilde{q}_{n, \Gamma}\) are much easier to compute by simulation. The following Theorem states that both confidence regions are uniformly asymptotically valid over the parameter space \(\Theta\). A proof can be found in Appendix \ref{app: cr}.

\begin{theorem} \label{thm: valid_cr}
    Fix \(\alpha\in (0, 1)\) and let \(CR_a(\alpha)\) and \(CR_b(\alpha)\) be as given in \eqref{eq: cr_a} and \eqref{eq: cr_b}. Under Assumptions \ref{ass: M} and \ref{ass: U}, both are asymptotically uniformly valid over \(\Theta\) in the sense that
    \[
    \liminf_{n\rightarrow\infty} \inf_{\theta\in \theta} \mathbb{P}\left(\Gamma\in CR_i(\alpha)\right) \ge 1 - \alpha
    \]
    for $i = a,b$.
\end{theorem}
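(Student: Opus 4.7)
The plan rests on the uniform distributional approximation $\hat{t}^2_\Gamma \rightarrow_w t^2_\Gamma$ over $\Theta$ already asserted in \eqref{eq: t_gam} (a consequence of Theorem \ref{thm: unif_app}, the continuous mapping theorem, and uniform consistency of $\hat{\Sigma}$, which I would derive by expanding $\hat{\epsilon}_t = \epsilon_t - (\hat{\Gamma}-\Gamma)X_{t-1}$ and combining a martingale LLN for $n^{-1}\sum_t \epsilon_t\epsilon_t^T$ with the rate $n(\hat{\Gamma}-\Gamma)S_{XX}(\hat{\Gamma}-\Gamma)^T = O_p(1)$ obtained from Theorem \ref{thm: unif_app}). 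The remaining task is to promote this Lipschitz-metric convergence into a $(1-\alpha)$-quantile coverage statement uniformly in $\theta$, which I would do via the subsequence characterization of uniform convergence (Lemma 1 of \cite{kasy2019uniformity}).

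For $CR_a(\alpha)$, suppose the claim fails, so that along some sequence $\theta_n \in \Theta$ one has $\mathbb{P}(\hat{t}^2_{\Gamma_n} \le q_{n,\Gamma_n}(1-\alpha)) \to c < 1-\alpha$. The uniform bounds in Assumption \ref{ass: U} confine $F_{\theta_n}$, $\Sigma_{\theta_n}$, and $J_{\theta_n}$ to compact sets, while the diagonal entries of $C_n(\theta_n)$ lie in the compact interval $[-\infty, 0]$, so along a further subsequence all of these converge. On this subsubsequence both $\hat{t}^2_{\Gamma_n}$ (via \eqref{eq: t_gam}) and $t^2_{\Gamma_n}$ (by construction) converge weakly to a common limit law $T$, namely the quadratic form in stochastic integrals of a limiting OU-type process driven by Brownian motion. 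Since $T$ is a non-degenerate quadratic form in Gaussian stochastic integrals, its distribution function is continuous on $(0,\infty)$, so $q_{n,\Gamma_n}(1-\alpha) \to q^\infty$, the $(1-\alpha)$-quantile of $T$, and Portmanteau yields $\mathbb{P}(\hat{t}^2_{\Gamma_n} \le q_{n,\Gamma_n}(1-\alpha)) \to \mathbb{P}(T \le q^\infty) = 1-\alpha$, contradicting $c < 1-\alpha$.

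For $CR_b(\alpha)$ the same scheme applies once one has $\tilde{q}_{n,\Gamma}(1-\alpha) - q_{n,\Gamma}(1-\alpha) \to 0$ along any $\theta_n$. Conditionally on the data, $(Y_t)$ is a VAR(1) driven by i.i.d.\ Gaussian errors with covariance $\hat{\Sigma}$, and since $\hat{\Sigma}$ is uniformly consistent for $\Sigma$, the triple $(\Gamma_n, \hat{\Sigma}_n, c')$ eventually lies in a parameter set satisfying Assumption \ref{ass: U} on a probability-one event. Theorem \ref{thm: unif_app} therefore gives the same limit law $T$ for $\tilde{t}^2_{\Gamma_n}$, and the continuous-CDF quantile argument of the previous paragraph yields $\tilde{q}_{n,\Gamma_n}(1-\alpha) \to q^\infty$ as required. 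The main obstacle throughout is this promotion of $d_{BL}$-convergence to quantile-level validity uniformly in a drifting $(\theta, n)$: the subsequence route sidesteps the lack of a ready Portmanteau-in-$\theta$ theorem but leans on precompactness of the family of limit laws (guaranteed by Assumptions \ref{ass: eig}--\ref{ass: jord}), continuity of the limit CDF (from the Gaussian structure of $J_{t,C}$), and, for $CR_b$, the admissibility of a data-driven critical value through conditioning on $\hat{\Sigma}$.
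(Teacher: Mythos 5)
Your proposal is correct in substance and rests on exactly the two ingredients the paper uses: uniform weak convergence of both $\hat{t}^2_\Gamma$ and $\tilde{t}^2_\Gamma$ to $t^2_\Gamma$ over $\Theta$, plus regularity of the limit law. The difference is in how the quantile-coverage step is executed. The paper's proof is a one-line appeal to Proposition 13 in the supplementary material of \cite{lundborg2021conditional}, whose hypothesis is that the family of limit laws $t^2_\Gamma$ is \emph{uniformly absolutely continuous} with respect to Lebesgue measure; that proposition converts $d_{BL}$-convergence directly into a uniform coverage bound without any subsequence extraction. You instead rebuild this step by hand via drifting sequences: precompactness of $(F_\theta,\Sigma_\theta,J_\theta)$ and of the diagonal of $C_n(\theta)$ in $[-\infty,0]$, convergence of the limit laws along a subsubsequence, continuity of the limit CDF, and Portmanteau. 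That route is valid, but it silently requires continuity of the map from the subsequential limit of $(C_n,\Sigma_n,F_n)$ to the law of $t^2_\Gamma$, \emph{including at the degenerate boundary} $C_{ii}\to-\infty$ where the OU integrals collapse to the stationary Gaussian limit; this is supplied by Lemma \ref{lem: jc_stat} and the mixed-regime analysis but deserves explicit mention, and you should also verify non-degeneracy of the limiting quadratic form when some $C_{ii}\to-\infty$ and others stay bounded. For $CR_b$ your conditioning-on-$\hat{\Sigma}$ argument is the right idea and matches what the paper folds into the claim that $\tilde{t}^2_\Gamma\rightarrow_w t^2_\Gamma$ uniformly (which in turn rests on the Gaussian approximation results of Appendix \ref{app: approx} and uniform consistency of $\hat{\Sigma}$); the only imprecision is that the relevant event has probability tending to one rather than probability one.
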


\subsection{Predictive Regression} \label{sec: pr}

One application is robust inference in the predictive regression model. For a deeper discussion of why uniformly valid inference methods are important in this setting see \cite{campbell2006efficient, elliott1994inference}, which cover the case of a univariate regressor, but the same considerations hold more generally. To fix ideas, consider 
\[
\Theta_P = \left\{\theta \in \Theta : \Gamma_{j1}= 0 \; \forall j=1,..., d\right\}.
\]
If \(X_t\) is a VAR(1)-process satisfying Assumption \ref{ass: M} parameterized by \(\theta\in \Theta_P\), then we can split \(X_t=(Y_t, \tilde{X}_t^T)^T\) and \(\epsilon_t = (\rho_t, \tilde{\epsilon}_t^T)^T\) into their first coordinate and their last \(d-1\) coordinates such that 
\begin{align*}
Y_t = \gamma^T &\tilde{X}_{t-1} + \rho_t, \\
\tilde{X}_t = \tilde{\Gamma}&\tilde{X}_{t-1} + \tilde{\epsilon}_t,
\end{align*}
where \(\gamma^T = (\Gamma_{1j})_{2\le j\le d}\) and \(\tilde{\Gamma} = (\Gamma_{ij})_{2\le i,j\le d}\). The parameter of interest is \(\gamma\). The standard approach is to compute the least squares estimator \(\hat{\gamma}\) and base inference on the \(t^2\)-statistic. Unfortunately, we encounter the same issues as described above. To see this, let \(\Sigma_Y = \Sigma_{11}\), \(\Sigma_X = (\Sigma_{i, j})_{2\le i,j\le d}\), \(\Sigma_{YX} = (\Sigma_{1, j})_{2\le j\le d}\), and \(\Sigma_{XY}=\Sigma_{YX}^T\) and define \(\delta = \Sigma_{X}^{-1}\Sigma_{XY}\). Then, adopting previous notation, Theorem \ref{thm: unif_app} yields
\[
\hat{t}_\gamma^2 \rightarrow_w  \left(\int_0^1 \tilde{J}_{t,C}dB_{1,t}\right)^T\left(\int_0^1 \tilde{J}_{t, C}\tilde{J}_{t, C}^T dt\right)^{-1}\int_0^1 \tilde{J}_{t, C} dB_{1,t} =: t_\gamma^2
\]
uniformly over \(\Theta_P\) where \(\tilde{J}_{t, C}\) consists of the last \(d-1\) coordinates of \(\hat{J}_{t, C}\) and \(B_{1, t}\) is the first coordinate and \(B_{2, t}\) the last \(d-1\) coordinates of \(W_t \hat{\Sigma}^{\frac{1}{2}}\). 
Since \(B'_{1,t} = (\Sigma_Y - \delta^T \Sigma_X\delta)^{-\frac{1}{2}}(B_{1, t} - \delta B_{2, t})\) is a standard \((d-1)\)-dimensional Brownian motion independent of \(B_{2, t}\) satisfying \(B_{1, t} = \delta B_{2, t} + (\Sigma_Y - \delta^T \Sigma_X\delta)^{\frac{1}{2}}B'_{1, t}\), we find that 
\begin{equation} \label{eq: pred_asym}
    t_\gamma^2 \peq \left|\left|\left(\Sigma_Y - \delta^T \Sigma_{X}\delta\right)^{\frac{1}{2}}Z + Z_{\tilde{\Gamma}}\delta\right|\right|^2,
\end{equation}
where \(Z_{\tilde{\Gamma}}=(\int \tilde{J}_{t, C}\tilde{J}_{t, C}^Tdt)^{-\frac{1}{2}}\int \tilde{J}_{t, C}dB_{2, t}\), and \(Z\) is a \((d-1)\)-dimensional standard normal vector independent of \(Z_{\tilde{\Gamma}}\). The nuisance parameter, \(C=C_n(\theta)\), is therefore also present in the distribution of \(t^2_\gamma\) via \(Z_{\tilde{\Gamma}}\) necessitating alternative methods of inference. Using the results of Theorem \ref{thm: valid_cr}, we adopt the univariate Bonferroni approach of \cite{campbell2006efficient} to obtain uniformly asymptotically valid confidence intervals. Say we want to find a confidence region for \(\gamma\) with significance level \(\alpha\in (0, 1)\). For \(\alpha_1, \alpha_2\in (0, 1)\) with \(\alpha_1 + \alpha_2 = \alpha\), the construction proceeds as follows: First construct a \(100(1-\alpha_1)\%\) confidence region for \(\tilde{\Gamma}\) using, e.g.,  either \(CR(\alpha_1) = CR_a(\alpha_1)\) or \(CR(\alpha_1) = CR_b(\alpha_1)\) (suitably modified for \(d-1\) dimensions). Then, for each \(\tilde{\Gamma}\in CR(\alpha_1)\), let \(CR_{\gamma|\tilde{\Gamma}}(\alpha_2)\) be a \(100(1-\alpha_2)\%\) confidence region for \(\gamma\) given \(\tilde{\Gamma}\). A confidence region not depending on \(\tilde{\Gamma}\) and with coverage of at least \(100(1-\alpha)\%\) is then obtained via a Bonferroni correction:
\begin{equation} \label{eq: ci_pr}
CR_\gamma(\alpha_1, \alpha_2) = \bigcup_{\tilde{\Gamma}\in CR(\alpha_1)}CR_{\gamma|\tilde{\Gamma}}(\alpha_2).    
\end{equation}
Let \(\hat{\gamma}_{\tilde{\Gamma}}\) be the estimator obtained by regressing \(Y_{\tilde{\Gamma}, t} = Y_t - \hat{\Sigma}_{YX}\hat{\Sigma}_X^{-1}(\tilde{X}_t - \tilde{\Gamma}\tilde{X}_{t-1})\) on \(\tilde{X}_{t-1}\) with standard error \(\hat{\sigma}^2_Y = \hat{\Sigma}_Y - \hat{\Sigma}_{YX}\hat{\Sigma}_X^{-1}\hat{\Sigma}_{XY}\). A choice for \(CR_{\gamma|\tilde{\Gamma}}(\alpha_2)\) is then given by
\begin{equation} \label{eq: gam_cond_cr}
    CR_{\gamma|\tilde{\Gamma}}(\alpha_2) = \left\{\gamma : \hat{\sigma}_Y^{-2}\hat{t}_{\gamma|\tilde{\Gamma}}^2 \le q_{d-1, 1-\alpha_2}\right\},
\end{equation}
with \(q_{d-1, 1-\alpha_2}\) denoting the \(1-\alpha_2\) quantile of the \(\chi^2_{d-1}\) distribution and \(\hat{t}^2_{\gamma|\tilde{\Gamma}}\) the usual \(t^2\)-statistic for the estimator \(\hat{\gamma}_{\tilde{\Gamma}}\) evaluated at \(\gamma\). A proof of the following is given in Appendix \ref{app: cr}.

\begin{lemma} \label{lem: ci_pr}
    For fixed significance levels \(\alpha_1, \alpha_2\in (0, 1)\) with \(\alpha_1 + \alpha_2\in (0, 1)\), let \(CR_\gamma(\alpha_1, \alpha_2)\) be the confidence interval given in \eqref{eq: ci_pr} with \(CR(\alpha_1)\) having uniform asymptotic level and \(CR_{\gamma|\tilde{\Gamma}}(\alpha_2)\) as given in \eqref{eq: gam_cond_cr}. Then, under Assumptions \ref{ass: M} and \ref{ass: U},
    \[
    \liminf_{n\rightarrow\infty}\inf_{\theta\in \Theta_P}\mathbb{P}\left(\gamma\in CR_\gamma(\alpha_1, \alpha_2)\right) \ge 1 - \alpha_1 - \alpha_2.
    \]
\end{lemma}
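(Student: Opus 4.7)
The approach is a standard Bonferroni decomposition combined with two ingredients: Theorem \ref{thm: valid_cr} applied to the sub-system for $\tilde X$, and a conditional pivotal limit for the predictive slope that arises from orthogonalization. Since
\[
\{\gamma \notin CR_\gamma(\alpha_1,\alpha_2)\} \subseteq \{\tilde\Gamma \notin CR(\alpha_1)\} \cup \{\gamma \notin CR_{\gamma\mid\tilde\Gamma}(\alpha_2)\},
\]
where the second event is evaluated at the \emph{true} $\tilde\Gamma$, a union bound reduces the problem to bounding each of the two probabilities by $\alpha_1$ and $\alpha_2$, uniformly over $\Theta_P$.

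For the first term, I would apply Theorem \ref{thm: valid_cr} to the $(d-1)$-dimensional sub-process $\tilde X_t = \tilde\Gamma \tilde X_{t-1} + \tilde\epsilon_t$. On $\Theta_P$ the first column of $\Gamma$ is zero, so $\tilde X_t$ is a genuine VAR(1) on its own, and its parameter triple $(\tilde\Gamma,\Sigma_X,\tilde c)$ inherits Assumptions \ref{ass: M} and \ref{ass: U}: the eigenvalues of $\tilde\Gamma$ are a subset of those of $\Gamma$, and a uniformly bounded and invertible similarity matrix for $\tilde\Gamma$ can be read off from the Jordan-like decomposition of $\Gamma$ supplied by Assumption \ref{ass: jord}. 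This gives $\limsup_n \sup_{\theta\in\Theta_P}\mathbb P(\tilde\Gamma \notin CR(\alpha_1)) \le \alpha_1$.

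For the second term, I want to show $\hat\sigma_Y^{-2}\hat t^2_{\gamma\mid\tilde\Gamma} \rightarrow_w \chi^2_{d-1}$ uniformly over $\Theta_P$. Substituting the true $\tilde\Gamma$, the residual in the conditional regression becomes $\rho_t - \hat\Sigma_{YX}\hat\Sigma_X^{-1}\tilde\epsilon_t$; replacing $\hat\Sigma$ by $\Sigma$ — a uniformly negligible modification by consistency of $\hat\Sigma$ together with the rate bounds supplied by Theorem \ref{thm: unif_app} — produces the orthogonalized innovation $\tilde r_t = \rho_t - \Sigma_{YX}\Sigma_X^{-1}\tilde\epsilon_t$. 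By construction $\tilde r_t$ is a scalar martingale difference with conditional variance $\sigma_Y^2 = \Sigma_Y - \Sigma_{YX}\Sigma_X^{-1}\Sigma_{XY}$ and conditional covariance zero with $\tilde\epsilon_t$, so the augmented innovation $(\tilde\epsilon_t^T,\tilde r_t)^T$ has block-diagonal conditional covariance $\mathrm{diag}(\Sigma_X,\sigma_Y^2)$. Applying Theorem \ref{thm: unif_app} to the full process $X_t=(Y_t,\tilde X_t)$ driven by $\epsilon_t=(\rho_t,\tilde\epsilon_t^T)^T$ and reading off the appropriate linear functionals yields joint uniform convergence
\[
\bigl(H_{\tilde X}^{-1/2} S_{\tilde X\tilde X} H_{\tilde X}^{-1/2},\; \sqrt n\,H_{\tilde X}^{-1/2} S_{\tilde X,\tilde r}\bigr) \;\rightarrow_w\; \Bigl(G_{\tilde X}^{-1/2}\!\int_0^1 \tilde J\tilde J^T dt\,G_{\tilde X}^{-1/2},\;\; \sigma_Y G_{\tilde X}^{-1/2}\!\int_0^1 \tilde J\,dW'_t\Bigr),
\]
where $W'$ is a one-dimensional Brownian motion independent of the driver of $\tilde J$. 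Plugging into the quadratic form defining $\hat t^2_{\gamma\mid\tilde\Gamma}$ and cancelling $G_{\tilde X}$ gives
\[
\hat t^2_{\gamma\mid\tilde\Gamma} \;\rightarrow_w\; \sigma_Y^2\,\Bigl(\int_0^1 \tilde J\,dW'\Bigr)^{\!T}\Bigl(\int_0^1 \tilde J\tilde J^T dt\Bigr)^{-1}\!\int_0^1 \tilde J\,dW'.
\]
Conditionally on the path of $\tilde J$, the vector $\bigl(\int \tilde J\tilde J^T dt\bigr)^{-1/2}\int \tilde J\,dW'$ is $\mathcal N(0,I_{d-1})$ by independence of $W'$ and $\tilde J$, so the limit is $\sigma_Y^2\chi^2_{d-1}$; uniform consistency of $\hat\sigma_Y^2$ then gives $\limsup_n\sup_{\theta\in\Theta_P}\mathbb P(\gamma\notin CR_{\gamma\mid\tilde\Gamma}(\alpha_2))=\alpha_2$. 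This mirrors the distributional identity \eqref{eq: pred_asym}, where killing the $Z_{\tilde\Gamma}\delta$ nuisance term by orthogonalizing against $\tilde\epsilon_t$ leaves exactly the pivotal $\sigma_Y^2\|Z\|^2$ piece.

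The main obstacle is the second step, and specifically the conditional pivot argument: one must establish the joint limit for $S_{\tilde X\tilde X}$ and $\sum \tilde X_{t-1}\tilde r_t/\sqrt n$ even though $\tilde r_t$ is not the native innovation of $\tilde X$, and verify that substituting $\hat\Sigma$ for $\Sigma$ in the orthogonalization is uniformly $o_p(1)$. Both go through Theorem \ref{thm: unif_app} applied to the $d$-dimensional system $X_t$, but the bookkeeping required to confirm that the block-triangular structure on $\Theta_P$ preserves Assumptions \ref{ass: eig}--\ref{ass: jord} for the $(d-1)$-dimensional sub-process, and that the error from $\hat\Sigma\neq\Sigma$ remains negligible uniformly in the non-stationary regime, is where the technical work resides.
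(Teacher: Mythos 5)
Your proposal is correct and follows essentially the same route as the paper: a Bonferroni/union-bound decomposition into the two coverage events, the first handled by the assumed uniform level of \(CR(\alpha_1)\), and the second by orthogonalizing the innovation to \(\tilde{\rho}_t = \rho_t - \Sigma_{YX}\Sigma_X^{-1}\tilde{\epsilon}_t\), showing the \(\hat{\Sigma}\)-versus-\(\Sigma\) substitution error is uniformly negligible, and obtaining the pivotal \(\sigma_Y^2\chi^2_{d-1}\) limit exactly as in \eqref{eq: pred_asym}. The only difference is that you additionally argue why the sub-process \(\tilde{X}_t\) inherits the assumptions, which the paper sidesteps by simply assuming \(CR(\alpha_1)\) has uniform asymptotic level.
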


\begin{remark} \label{rem: pr_bonf}
    Lemma \ref{lem: ci_pr} is easy to extend to hypothesis testing. Say, e.g., that we want to test the null of no predictive information in the regressor, \(H_0:\gamma = 0\), versus the alternative \(H_A:\gamma \neq 0\). This is equivalent to checking whether \(0\in CR_\gamma(\alpha_1, \alpha_2)\). Alternatively, the test \(\phi_n:(\mathbb{R}^d)^n\rightarrow \{0, 1\}\) given by \(\phi_n = \mathbf{1}\left(\inf_{\tilde{\Gamma}\in CR(\alpha_1)}\hat{\sigma}^{-2}_Y \hat{t}^2_{0|\tilde{\Gamma}} \le q_{d-1, 1-\alpha_2}\right)\) has asymptotic uniform level and does not require the explicit computation of the confidence regions \(CR_{\gamma|\tilde{\Gamma}}(\alpha_2)\).
\end{remark}

\subsection{Lag agumentation} \label{sec: lag_aug}

Other approaches have been suggested to be robust against deviations from exact unit root assumptions. The first one is the lag-augmented VAR methodology proposed by \cite{dolado1996making, toda1995statistical}. In our setup of VAR(1) processes this approach regresses \(X_t\) on \(X_{t-1}\) and the additional augmented lag \(X_{t-2}\) upon which standard inference methodology is valid. Say, for example, that we are interested in testing the hypothesis \(H_0:A\text{ vec}(\Gamma) = b\) for \(A\in\mathbb{R}^{k\times d^2}\) of rank \(k\le d^2\) and \(b\in\mathbb{R}^k\). Let \(\hat{\Pi}_{LA} \in \mathbb{R}^{d\times 2d}\) denote the least squares estimator in the lag-augmented regression of \(X_{t}\) on \(\bar{X}_t = (X_{t-1}^T, X_{t-2}^T)^T\). Denote \(D = (I_d, 0)^T\) and define \(\hat{\Gamma}_{LA} = \hat{\Pi}_{LA}D\) along with the Wald-statistic
\begin{equation}\label{eq: t_la}
    \hat{t}^2_{LA, A, b} = \frac{n (A\text{vec}(\hat{\Gamma}_{LA}) - b)^2 }{\hat{\sigma}^2_{LA, A}},
\end{equation}
where \(\hat{\sigma}^2_{LA, A} = A\hat{\Sigma}_{LA}A^T\) and \(\hat{\Sigma}_{LA} = \hat{\Sigma}^{-1}\otimes \hat{\Sigma}\). Then, under the null, \(\hat{t}^2_{LA, A, b}\) converges in distribution to \(\chi^2_{k}\) uniformly over \(\Theta\) allowing for construction of uniformly valid confidence intervals and tests (see Appendix \ref{app: lag}). For example, a \((1-\alpha)100\%\) confidence interval for \(\Gamma_{i j}\) is given by
\begin{equation} \label{eq: ci_la}
    CI_{LA, ij}(\alpha) = \hat{\Gamma}_{LA, ij} \pm z_{1-\alpha/2}\frac{\hat{\sigma}_{LA, ij}}{\sqrt{n}},
\end{equation}
where \(z_{1-\alpha/2}\) is the \(1-\alpha/2\) standard normal quantile.

\begin{remark} \label{rem: la}
    The key ingredient that facilitates standard inference is the fact that \(\sqrt{n}(\hat{\Gamma}_{LA} - \Gamma)\) converges uniformly in distribution over \(\Theta\) to a family of \(d\)-dimensional Gaussians (see Lemma \ref{lem: la_inf}). In particular, there is no need for normalization, since all components converge at the same rate \(O(\sqrt{n})\). This is contrary to the limiting behaviour of \(\hat{\Gamma}\) which needs to be normalized by the matrix \(H^{-\frac{1}{2}}\) since the presence of roots close to unity make certain parts of \(\hat{\Gamma}\) super efficient. This also suggests some loss of efficiency when using lag augmentation, which does not come as a surprise since we are essentially overfitting the model.
\end{remark}

\subsection{IVX} \label{sec: ivx}

Another approach, known as IVX, that deals specifically with the potential presence of unit roots uses endogenously constructed instrumental variables to slow down the rate of convergence of the estimator enough to ensure mixed Gaussian limiting distributions. It was first suggested by \cite{phillips2009econometric} and later extended in \cite{kostakis2015robust, magdalinos2020econometric}. The most general framework considered so far was proposed in \cite{magdalinos2020econometric}. However, they make the crucial assumption that all roots converge to unity at the same speed. While this allows for easy construction of confidence intervals of general linear functions of \(\Gamma\) and simplifies the theory somewhat, this is a significant restriction. In particular, it does not yield uniform guarantees as the ones discussed in this paper. This excludes, for example, the mixed regime discussed in Section \ref{sec: mixed} covering cases where parts of the process are stationary and others exhibit random walk behaviour. In this section we detail how one may achieve truly uniform results. This comes at the cost of less general confidence regions, which is essentially because we need to employ different normalizations depending on how close the different roots are to unity. This is akin to using \(\hat{t}^2_{\Gamma}\) for inference.

The idea of IVX is simple. We achieve Gaussian asymptotics by constructing an endogenously generated instrument that lies in the stationary regime and then perform IV-regression. For some fixed \(\beta\in(1/2, 1)\), we define, for each \(n\in\mathbb{N}\), the instrument \((Z_t)_{t\in\mathbb{N}}\) by
\[
Z_t = (1-n^{-\beta})Z_{t-1} + \Delta X_t, \quad Z_0 = 0,
\]
where we have suppressed the dependence on \(n\) in the notation. For each \(n\in\mathbb{N}\), \(Z_t\) is a VAR(1) process with the error terms given by \(\Delta X_t\) and the sequence of coefficients, \((1-n^{-\beta})I\), fall inside the stationary regime. The IVX estimator is the IV estimator of regressing \(X_t\) on \(X_{t-1}\) and the instrumental variable \(Z_{t-1}\), i.e.,
\[
\hat{\Gamma}_{IV} = \sum_{t=1}^n X_tZ_{t-1}^T\left(\sum_{t=1}^n X_{t-1}Z_{t-1}^T\right)^{-1}.
\]
The corresponding \(t^2\)-statistic for testing the null \(H_0:\Gamma = \Gamma_0\) is then given by
\[
\hat{t}^2_{IV, \Gamma_0} = \tr\left(n\hat{\Sigma}^{-\frac{1}{2}}\left(\hat{\Gamma}_{IV} - \Gamma_0\right)S_{XZ}S_{ZZ}^{-1}S_{ZX}\left(\hat{\Gamma}_{IV} - \Gamma_0\right)^T\hat{\Sigma}^{-\frac{1}{2}}\right),
\]
where \(S_{XZ}\) and \(S_{ZZ}\) are defined analogously to \(S_{XX}\). It turns out that inference based on this statistic is standard. In particular, \(\hat{t}^2_{IV, \Gamma}\) has the standard asymptotic \(\chi^2_{d^2}\) distribution uniformly over the parameter space \(\Theta\) (see Appendix \ref{app: ivx} for more details) and therefore, for fixed \(\alpha\in(0, 1)\), a confidence region with asymptotic uniform level is given by
\[
    CR_{IV}(\alpha) = \left\{\Gamma : \hat{t}^2_{IV, \Gamma}\le q_{d^2, 1-\alpha}\right\}.
\]

\begin{remark}
    The use of the instrumental variable \(Z_t\) simplifies inference. Indeed, the asymptotic distribution is standard and there is no need for extensive simulations. There is, however, some loss of efficiency. The estimator \(\hat{\Gamma}_{IV}\) converges at rate of \(n^{\beta}\) or slower. If \(\beta\) is close to one or if all the roots converge to unity at rate that is slower than \(n^{\beta}\), this will not be a problem, but in general \(\hat{\Gamma}\) is a more efficient estimator of \(\Gamma\).
\end{remark}

\section{Simulations}

In this section we investigate the finite sample properties of the methods described in the preceding section. First, we consider the problem of constructing a confidence interval for \(\Gamma_{ij}\). Testing whether \(X_{j, t}\) Granger causes \(X_{i, t}\), which in this case amounts to testing the null \(H_0: \Gamma_{ij} = 0\), is then equivalent to checking if 0 is contained in the confidence interval. Since there is nothing special about the choice of \(i\) and \(j\) we choose to focus on \(\Gamma_{11}\) for simplicity. The second problem we consider is that of testing \(H_0:\gamma = 0\) in the predictive regression model.

\subsection{Confidence intervals}

Throughout we fix the significance level at \(\alpha = 0.05\). We compare three different ways of constructing confidence intervals for \(\Gamma_{11}\). The first is lag-augmentation yielding the confidence interval \(CI_{LA}(\alpha)=CI_{LA, 11}(\alpha)\), equation \eqref{eq: ci_la}. The other two methods first compute a confidence region for the entire matrix \(\Gamma\) and then find a confidence interval for \(\Gamma_{11}\) by projecting the confidence region onto the first coordinate. That is, for a given confidence region of \(\Gamma\) with \((1 - \alpha)100\%\) coverage, \(CR(\alpha)\), we obtain the projected confidence interval 
\[
CI(\alpha)= \left(\inf_{\Gamma\in CR(\alpha)}\Gamma_{11}, \sup_{\Gamma\in CR(\alpha)}\Gamma_{11}\right).
\]
We let \(CI_b(\alpha)\) (respectively \(CI_{IV}(\alpha)\)) be the confidence interval obtained by projecting \(CR_b(\alpha)\) (respectively \(CR_{IV}(\alpha)\)). Of these, \(CI_b\) is by far the most costly to compute as the dimension increases. The constraint in the optimization problem is costly to evaluate due to the need for simulations to compute the critical value \(\tilde{q}_{n, \Gamma}(1-\alpha)\) at each \(\Gamma\). This issue can, however, be partly resolved by the use of the EAM-algorithm \citep{kaido2019confidence}. See Appendix \ref{app: sim_algo} for details. \(CI_{IV}\) also involves an optimization problem, but the constraint is cheap to evaluate since the critical value in \(CR_{IV}\) is fixed and standard. \(CI_{IV}\) can be computed by standard solvers. We let \(\beta=0.9\) in the IVX regression.

To verify that the confidence intervals are truly uniform, we look at choices of \(\Gamma\) with eigenvalues in different regimes. In particular, for a fixed dimension \(d\) and sample size \(n\), we consider \(\Gamma\in\mathbb{R}^{d\times d}\) with eigenvalues \(\lambda_1(\Gamma) = 1\) and \(\lambda_i(\Gamma) = 1 - (1/n)^{1/(i-1)}\) for \(i=2,...,d\). For each simulation, we draw a new set of random eigenvectors and the errors are i.i.d. Gaussian with non-diagonal covariance matrix. For a detailed explanation of the setup, see Appendix \ref{app: sim_ci}. The results are recorded in Table \ref{tab: CI_tab}.

\begin{table}
\caption{Coverage and median length of \(C_b\), \(C_{IV}\), and \(CI_{LA}\).}
\centering
\noindent
\begin{tabular}{N N N N N N N }\toprule
\multicolumn{1}{N }{\textbf{}} & \multicolumn{3}{c }{Coverage} & \multicolumn{3}{c }{Median Length} \\
\cmidrule(lr){2-4}
\cmidrule(ll){5-7}
\multicolumn{1}{ N }{\(d\)} &  \(CI_b\) &  \(CI_{IV}\) &    \(CI_{LA}\) &  \(CI_b\) &  \(CI_{IV}\) &    \(CI_{LA}\) \\
\cmidrule(lr){1-1}
\cmidrule(lr){2-4}
\cmidrule(ll){5-7}
\multicolumn{1}{ c }{ } & \multicolumn{6}{ c }{\(n = 50\)} \\[1ex]
\multicolumn{1}{ N }{3} & 0.996 &     0.999 & 0.962 &     0.716 &     0.706 & 0.861 \\
\multicolumn{1}{ c }{4} & 0.999 &     1.000 & 0.971 &     1.090 &     1.075 & 0.948 \\
\multicolumn{1}{ c }{5} & 0.999 &     0.999 & 0.950 &     1.445 &     1.426 & 1.014 \\[1.5ex]
\multicolumn{1}{ c }{ } & \multicolumn{6}{ c }{\(n = 75\)} \\[1ex]
\multicolumn{1}{ c }{3} & 0.998 &     0.998 & 0.976 &     0.506 &     0.491 & 0.699 \\
\multicolumn{1}{ c }{4} & 0.999 &     0.999 & 0.973 &     0.786 &     0.756 & 0.758 \\
\multicolumn{1}{ c }{5} & 1.000 &     1.000 & 0.973 &     1.093 &     1.051 & 0.794 \\[1.5ex]
\multicolumn{1}{ c }{ } & \multicolumn{6}{ c }{\(n = 100\)} \\[1ex]
\multicolumn{1}{ c }{3} & 1.000 &     1.000 & 0.975 &     0.404 &     0.386 & 0.597 \\
\multicolumn{1}{ c }{4} & 0.999 &     0.999 & 0.971 &     0.653 &     0.620 & 0.648 \\
\multicolumn{1}{ c }{5} & 0.996 &     0.997 & 0.970 &     0.910 &     0.862 & 0.681 \\
\bottomrule
\end{tabular}
\label{tab: CI_tab}
\end{table}

All three confidence intervals have coverage greater than 0.95 in every case. As expected, both \(CI_b\) and \(CI_{IV}\) are conservative with practically a 100\% coverage. This is already apparent in 3 dimensions. Despite the loss of efficiency, however, both yield shorter intervals than \(CI_{LA}\) in 3 dimensions for all three sample sizes. This can most likely be attributed to \(\Gamma\) having multiple roots close to unity, implying that the lag-augmented estimator converges at a rate slower than the IVX and the LS estimators. This advantage more or less vanishes in 4 dimensions and in 5 dimensions \(CI_{LA}\) is the clear winner. Intuitively, the dimension of the confidence regions is quadratic in \(d\) and the loss suffered by projection methods therefore quickly sets in. Interestingly, this phenomenon seems less pronounced for higher sample sizes. Another key result in Table \ref{tab: CI_tab} is that \(CI_b\) is wider than \(CI_{IV}\) in every case. This is counter to the fact that the least squares estimator should be more efficient. One possible explanation is that \(\Gamma\) has roots that converge to 1 at a slower rate than \((1 / n)^{\beta}\) limiting the loss of efficiency of the IVX estimator. Another possible explanation is that finite sample behaviour of \(\hat{t}^2_{IV}\) is different from the asymptotic \(\chi^2\)-distribution for the sample sizes considered here, resulting in a confidence region for \(\Gamma\) with slightly lower coverage but still with conservative coverage when projected onto the first coordinate.

\subsection{Predictive regression testing}

Fix \(\alpha = 0.1\). We compare three methods to test \(H_0:\gamma = 0\) against the alternative \(H_A:\gamma\neq 0\) in the predictive regression model. The first one uses lag-augmentation and is based on the test-statistic in equation \eqref{eq: t_la}. We denote this test by \(\phi_{LA}\). The other two tests employ the Bonferroni strategy described in Section \ref{sec: pr}. In particular, for \(\alpha_1 = 0.05\), \(\alpha_2=0.05\), and \(CR(\alpha_1)\) a confidence region for \(\tilde{\Gamma}\) with uniform asymptotic level \(\alpha_1\), we define the test \(\phi_n = \mathbf{1}(\inf_{\tilde{\Gamma}\in CR(\alpha_1)}\hat{\sigma}^{-2}_Y\hat{t}^2_{0|\tilde{\Gamma}} \le q_{d-1, 1-\alpha_2})\). We consider \(CR(\alpha_1) = CR_b(\alpha_1)\) and \(CR(\alpha_1) = CR_{IV}(\alpha_1) \) denoting the corresponding tests by \(\phi_b\) and \(\phi_{IV}\).\footnote{Here, \(CR_b(\alpha_1)\) and \(CR_{IV}(\alpha_1)\) are confidence regions for \(\tilde{\Gamma}\) and not the entire matrix \(\Gamma\).} By Lemma \ref{lem: ci_pr}, the tests will have uniform asymptotic level. Computing the two latter test statistics involves an optimization problem. As in the case of the projection confidence intervals, it is much costlier to compute \(\phi_b\) and we employ the EAM-algorithm described in Appendix \ref{app: sim_algo} as a practically feasible solution. 

\begin{figure}
    \centering
    \includegraphics[width=400pt]{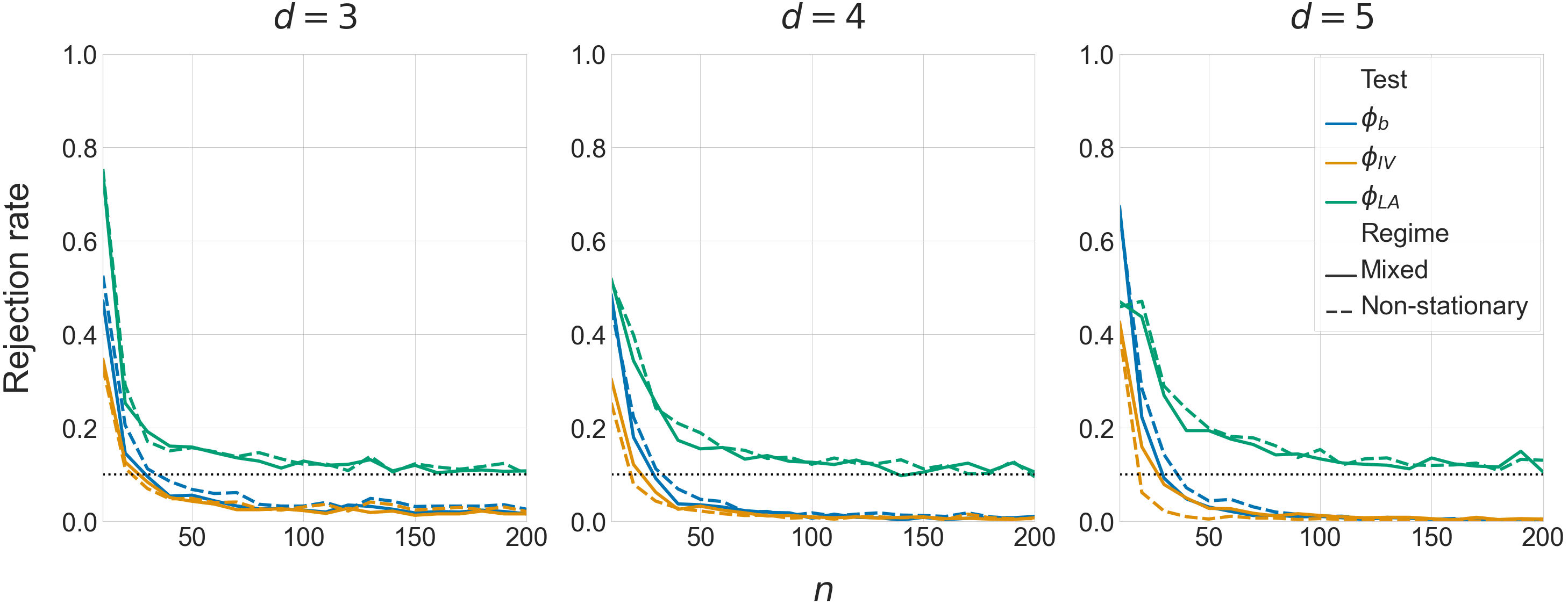}
    \caption{\small Rejection rates under the null \(H_0:\gamma = 0\) of the three tests at different sample sizes and dimensions and under two different regimes. The significance level is fixed at \(\alpha=0.1\) for all \(n\) and \(d\), given by the dotted line. The rejection rate is the proportion of times the null was rejected over 1000 simulations.}
    \label{fig:pr_level}
\end{figure}

\begin{figure}
    \centering
    \includegraphics[width=400pt]{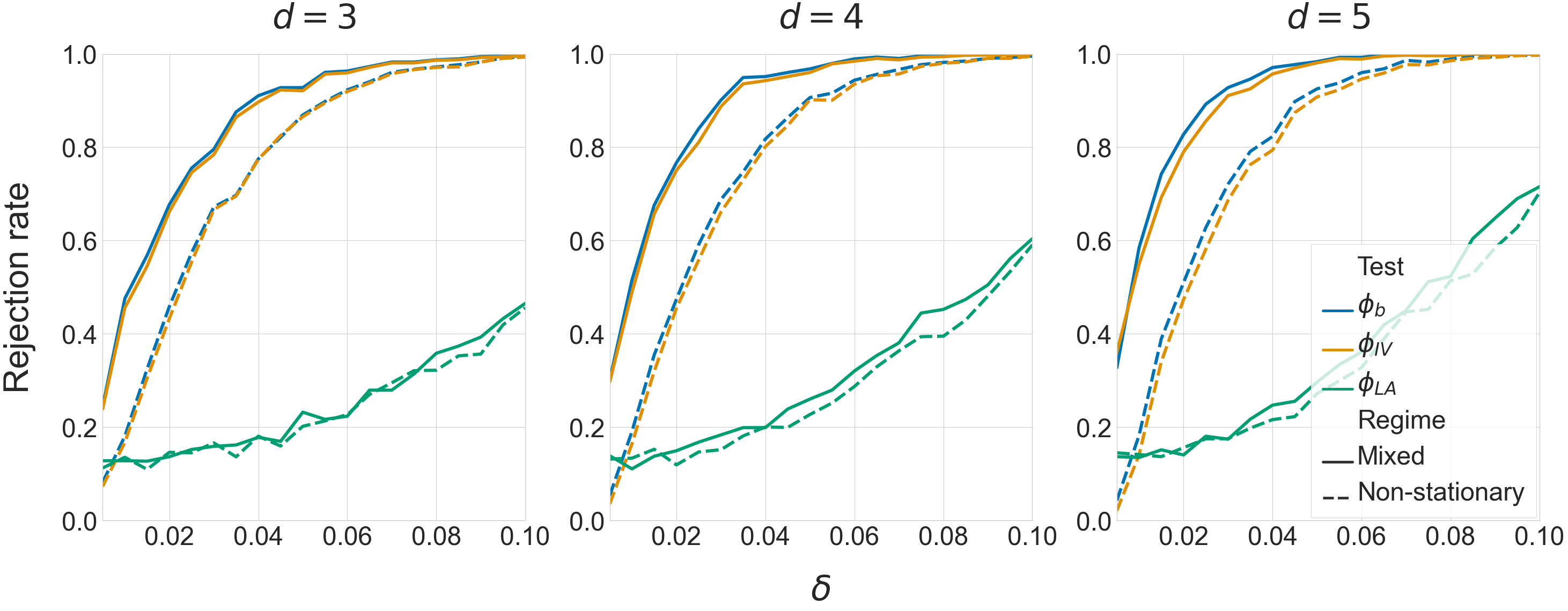}
    \caption{\small Rejection rates of the null \(H_0:\gamma = 0\) with \(\gamma = \delta\mathbbm{1}\) under different dimensions, regimes and for a sequence of alternatives. Sample size is fixed at \(n=100\), and the significance level is \(\alpha=0.1\). The rejection rate is the proportion of times the null was rejected over 1000 simulations. The dimension refers to the dimension of \(\tilde{X}_t\).}
    \label{fig:pr_power}
\end{figure}

We perform two sets of simulation experiments for the three tests. To verify that the uniform guarantees hold, we consider a sequence of \(\tilde{\Gamma}\) in the mixed regime. Throughout, we let \(d=4, 5, 6\) and \(\tilde{\Gamma}\in\mathbb{R}^{(d-1)\times (d-1)}\) is chosen as above. We also consider the case \(\tilde{\Gamma}=I\) so that \(\tilde{X}_t\) is a random walk, i.e., \(\tilde{\Gamma}\) is in the non-stationary regime. First we investigate the size properties of the three tests for different sample sizes. The results are depicted in Figure \ref{fig:pr_level}. Evidently, \(\phi_b\) and \(\phi_{IV}\) quickly achieve a rejection rate well below the significance level for all three dimensions and in both regimes. This is in line with the theory since the Bonferroni corrections inherent in these tests result in tests with conservative sizes. For \(\phi_{LA}\) the asymptotics take a little longer to set in. At around \(n=150\) it achieves the correct size for 3 and 4 dimensions, but convergence is slower in 5 dimensions. 

To compare the power of the three tests at finite sample sizes we consider a sequence of alternatives increasingly closer to \(0\). In particular, we let \(\gamma = \delta\mathbbm{1}\) for \(\delta\in\{0.005, 0.01,..., 0.1\}\) and fix the sample size at \(n=100\). In both regimes and for all three choices of \(d\), \(\phi_b\) and \(\phi_{IV}\) vastly outperform \(\phi_{LA}\) correctly rejecting the null around 90\% of the time in the mixed regime for \(\delta=0.04\) compared to a rejection rate of only around 18\% for \(\phi_{LA}\). It looks as though \(\phi_b\) is slightly better than \(\phi_{IV}\) especially as the dimension increases, but the two are close overall. Interestingly, both tests seem to fare better in the mixed regime than in the stationary regime although we would expect the LS and the IVX estimator to converge at a slower rate in the mixed regime. This might be related to the Bonferroni correction and the shape of the confidence regions in the different regimes. The power might be improved upon by using more efficient corrections than Bonferroni, see e.g. \cite{elliott2015,jansson2006optimal}. The performance of \(\phi_{LA}\) does not depend much on the regime, but it does seem to slightly improve with the size of the dimension. The latter observation also holds for the other two tests and is probably a reflection of the fact that the alternative is easier to detect in larger dimensions.

\section{Conclusion}

We proved two major uniform asymptotic results for the sample covariance matrices of VAR(1) processes with potential unit roots. First we showed that \(S_{XX}\) and \(S_{X\epsilon}\) can be uniformly approximated by their Gaussian counterparts \(S_{YY}\) and \(S_{Y\rho}\). This result was used to derive another uniform approximation involving integrals of Ornstein-Uhlenbeck processes. While uniform asymptotic results akin to those presented here have been given in the literature for specific sequences of \(\Gamma\), this is the first time anything has been proven that is truly uniform over the parameter space \(\Theta\).

As an application of the uniform approximation results, we showed how to construct confidence regions for \(\Gamma\) with uniform asymptotic level. Similarly, we proved that the IVX methodology and lag augmentation also lead to uniformly valid inference if done properly.

\begin{appendix}

\section{Proofs} \label{app: proofs}

Before presenting the proofs we need three auxiliary results. We assume in this section that Assumptions \ref{ass: M} and \ref{ass: U} are true with the restriction \(F_\theta = I\). The first lemma collects some convergence results related to the normalizing matrix, \(H\), defined in \eqref{eq: HandG}.
\begin{lemma} \label{lem: norm_asym}
    We have
    \begin{enumerate}[label=(\alph*)]
        \item \label{lem: norm_asym_a} \(\liminf_{n\rightarrow\infty}\inf_{\theta\in\Theta} \sigma_{min}(H) > 0\).
        \item \label{lem: norm_asym_b} \(\sup_{R_{n, d}}\sigma_{min}(H)^{-1} = O(n^{-\eta})\).
        \item \label{lem: norm_asym_c} \(\sup_{R_{n, 0}}\sup_{1\le k\le j\le d}(1-|\lambda_{i_j}|)|H_{kj}| = O(1)\).
        \item \label{lem: norm_asym_d} \(\sup_{R_{n, 0}}||\Gamma^t|| \le C (1-\log(n)/n)^t\) for a constant \(C\ge 0\) not depending on \(\theta\) or \(n\).
        \item \label{lem: norm_asym_e} \(\sup_{R_{n, 0}}||\sum_{t=0}^{n-2} \Gamma^t \Sigma (\Gamma^t)^T|| = O(n/\log n)\)
        \item \label{lem: norm_asym_f} \(\sup_{R_{n, 0}}||\sum_{t=0}^{n-2} t\Gamma^t \Sigma (\Gamma^t)^T|| = O(n/\log n)\)
    \end{enumerate}
\end{lemma}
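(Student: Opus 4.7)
The plan is to expand $X_{t-1}=\sum_{k=0}^{t-2}\Gamma^k\epsilon_{t-1-k}$ to obtain the explicit representation
\[
H=\frac{1}{n}\sum_{k=0}^{n-2}(n-1-k)\Gamma^k\Sigma(\Gamma^k)^T,
\]
and then exploit the structural assumption $\Gamma\in\mathcal{J}_d(1-\alpha)$ together with the uniform bounds on $\sigma_{\min}(\Sigma)^{-1}$ and $\sigma_{\max}(\Sigma)$ supplied by Assumption~\ref{ass: sig}.

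Parts (a) and (b) are lower bounds on $\sigma_{\min}(H)$. For (a), retaining only the $k=0$ term yields $H\succeq \tfrac{n-1}{n}\Sigma$, so that $\sigma_{\min}(H)\ge\tfrac{n-1}{n}\sigma_{\min}(\Sigma)$ is bounded below uniformly over $\Theta$. For (b), on $R_{n,d}$ every eigenvalue satisfies $|\lambda_i|\ge 1-n^{-\eta}>1-\alpha$ for $n$ large, so by the definition of $\mathcal{J}_d(1-\alpha)$ the matrix $\Gamma$ is actually diagonal. For any unit vector $v$,
\[
v^T H v\ge \sigma_{\min}(\Sigma)\sum_i |v_i|^2\,\frac{1}{n}\sum_{k=0}^{n-2}(n-1-k)|\lambda_i|^{2k}.
\]
Restricting the inner sum to $k\le n^\eta/2$, where $|\lambda_i|^{2k}\ge e^{-2}$ and $n-1-k\ge n/2$, supplies a contribution of order $n^\eta$ for each $i$, so $\sigma_{\min}(H)=\Omega(n^\eta)$ uniformly over $R_{n,d}$, which is (b).

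For (c), in $R_{n,0}$ every eigenvalue is bounded away from $1$ at scale $\log n/n$. I would split $\Gamma=\Gamma_u\oplus\Gamma_s$, with $\Gamma_u$ diagonal carrying the eigenvalues in $(1-\alpha,1-\log n/n]$ and $\Gamma_s$ Jordan-like with spectral radius at most $1-\alpha$, and decompose $H$ into the corresponding block form. In the diagonal-diagonal block the $(k,j)$ entry equals $\Sigma_{kj}\cdot\tfrac{1}{n}\sum_{s=0}^{n-2}(n-1-s)(\lambda_{i_k}\bar{\lambda}_{i_j})^s$, a geometric sum bounded by $C/|1-\lambda_{i_k}\bar{\lambda}_{i_j}|$; combined with the algebraic inequality $1-|\lambda_{i_j}|\le 1-|\lambda_{i_k}||\lambda_{i_j}|\le|1-\lambda_{i_k}\bar{\lambda}_{i_j}|$ (using $|\lambda_{i_k}|\le 1$ and the ordering $|\lambda_{i_k}|\ge|\lambda_{i_j}|$ when $k\le j$) and $|\Sigma_{kj}|\le\sigma_{\max}(\Sigma)$, this yields $(1-|\lambda_{i_j}|)|H_{kj}|=O(1)$. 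The blocks involving Jordan structure are handled by appealing to (d): since their eigenvalues are bounded away from $1$, the corresponding $\Gamma_s^s$ decay exponentially so that the resulting $H_{kj}$ are already $O(1)$, and $(1-|\lambda_{i_j}|)$ is bounded.

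For (d), the same decomposition gives $||\Gamma_u^t||\le\sqrt{d}(1-\log n/n)^t$ directly, and the standard Jordan estimate $||\Gamma_s^t||\le C' t^{d-1}(1-\alpha)^t$ combined with the uniform boundedness of $((1-\alpha)/(1-\log n/n))^t t^{d-1}$ (once $\log n/n<\alpha$) delivers $||\Gamma_s^t||\le C''(1-\log n/n)^t$. Parts (e) and (f) then reduce to geometric summation: with $\rho=1-\log n/n$, $||\Gamma^t\Sigma(\Gamma^t)^T||\le C\rho^{2t}$, so that $\sum_{t=0}^{n-2}\rho^{2t}=O(1/(1-\rho^2))=O(n/\log n)$ gives (e), and the analogous estimate $\sum_{t=0}^{n-2}t\rho^{2t}=O(1/(1-\rho^2)^2)$ gives (f). The main obstacle will be (d): extracting a constant $C$ depending on neither $\theta$ nor $n$ requires the uniform Jordan control supplied by Assumption~\ref{ass: jord}, together with the crucial observation that $\mathcal{J}_d(1-\alpha)$ admits non-trivial super-diagonals only at eigenvalues bounded uniformly away from $1$, so that the polynomial-in-$t$ Jordan prefactor is dominated by the geometric rate $(1-\log n/n)^t$.
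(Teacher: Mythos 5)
Your proposal follows essentially the same route as the paper for every part: the explicit expansion $H=\tfrac{1}{n}\sum_{k=0}^{n-2}(n-1-k)\Gamma^k\Sigma(\Gamma^k)^T$, keeping the $k=0$ term for (a), diagonality of $\Gamma$ on $R_{n,d}$ plus a geometric lower bound for (b), the split into a near-unit diagonal part and a uniformly contractive Jordan-like part for (c) and (d), and geometric summation for (e) and (f). Your entrywise treatment of (c) via the sums in $\lambda_{i_k}\bar\lambda_{i_j}$ and the inequality $1-|\lambda_{i_j}|\le|1-\lambda_{i_k}\bar\lambda_{i_j}|$ is a mild (and if anything cleaner) variant of the paper's bound through $\|\Gamma^t\|_\infty$; your handling of the polynomial Jordan prefactor in (d) is exactly the point the paper also relies on.

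One concrete caveat concerns (f). Your estimate $\sum_{t}t\rho^{2t}=O\bigl(1/(1-\rho^2)^2\bigr)$ with $\rho=1-\log(n)/n$ yields $O\bigl((n/\log n)^2\bigr)$, not the stated $O(n/\log n)$: taking $d=1$, $\Gamma=1-\log(n)/n$ and $\Sigma=1$ (which lies in $R_{n,0}$) shows the sum is genuinely of order $n^2/(\log n)^2$, since the truncation at $n-2$ occurs well beyond the scale $1/(1-\rho^2)\approx n/(2\log n)$ where the summands peak. The paper's own proof displays exactly the same bound $C\rho^2/(1-\rho^2)^2$ and makes the same unjustified identification with $O(n/\log n)$, so this is a defect of the stated rate rather than of your argument specifically; but as written neither your derivation nor the paper's establishes (f) in the form claimed, and the weaker rate $O((n/\log n)^2)$ is what actually follows.
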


\begin{proof}
    We start with the proof of \ref{lem: norm_asym_a}. We have
    \begin{equation} \label{eq: sig_min}
        H = \frac{1}{n}\sum_{t=1}^n \sum_{s=1}^{t-1}\Gamma^{t-1-s}\Sigma\left(\Gamma^{t-1-s}\right)^T \geq \frac{1}{n}\sum_{t=1}^{n-1} \Sigma
    \end{equation}
    since every matrix in the summand is positive semidefinite. We get
    \[
    \inf_{\theta\in\Theta} \sigma_{min}(H) \ge \inf_{\theta\in\Theta}\frac{1}{n}\sum_{t=1}^{n-1}\sigma_{min}(\Sigma) = \frac{n-1}{n}\inf_{\theta\in \Theta}\sigma_{min}(\Sigma) > \frac{n-1}{n}c
    \]
    where \(c = \inf_{\theta\in\Theta}\sigma_{min}(\Sigma)>0\) by Assumption \ref{ass: sig}.
    
    For the proof of \ref{lem: norm_asym_b}, note that, for any \(n\) large enough and \(\theta\in R_{n, d}\), \(\Gamma\) is diagonal by Assumption \ref{ass: jord} and therefore we have from \eqref{eq: sig_min}
    \begin{align*}
        \sigma_{min}(H)
            &\ge \frac{\sigma_{min}(\Sigma)}{n}\sum_{t=1}^{n-1}\sum_{s=0}^{t-1}|\lambda_{min}(\Gamma)|^{2s} \\
            &\ge \frac{\sigma_{min}(\Sigma)}{n}\sum_{t=1}^{n-1}\frac{1 - (1-n^{-\eta})^{2t}}{1 - (1 - n^{-\eta})^2} \\
            &= \frac{\sigma_{min}(\Sigma)}{n(1-(1-n^{-\eta})^2)}\left(n - 1 -  \sum_{t=1}^{n-1}(1 - n^{-\eta})^{2t}\right) \\
            &= \frac{\sigma_{min}(\Sigma)}{n(1-(1-n^{-\eta})^2)}\left(n - \frac{1 - (1 - n^{-\eta})^{2n}}{1 - (1-n^{-\eta})^2}\right).
    \end{align*}
    For \(n_0\) large enough, we get
    \[
    \frac{1}{n}\left(n - \frac{1 - (1 - n^{-\eta})^{2n}}{1 - (1-n^{-\eta})^2}\right) \ge \frac{1}{2}, \quad \forall n\ge n_0
    \]
    and, thus, with \(C = 2\sup_{\theta\in\Theta}\sigma_{min}(\Sigma)^{-1} < \infty\), we have that, for all \(n\ge n_0\),
    \[
    \sup_{\theta\in R_{n, d}}\sigma_{min}(H)^{-1} \le C\left(1 - \left(1 - n^{-\eta}\right)^2\right).
    \]
    Since \(n^{\eta}(1 - (1 - n^{-\eta})^2)\rightarrow 2\) for \(n\rightarrow \infty\) and all \(\eta > 0\), this proves \ref{lem: norm_asym_b}.

    To prove \ref{lem: norm_asym_c}, fix some \(\theta\in R_{n, 0}\) and note that for all \(1\le k\le j\le d\)
    \[\left|\left(\Gamma^t\Sigma\left(\Gamma^t\right)^T\right)_{kj}\right|\le \left\|\Gamma^t\right\|_{\infty} \left|\sum_{i=1}^d\left(\Gamma^t\Sigma\right)_{ki}\right|\le d^2 \left\|\Gamma^t\right\|_{\infty}\|\Sigma\|_{\infty}\max_{k\le i\le d}\left|\left(\Gamma^t\right)_{ki}\right|.
    \]
    Now define \(c_n=d^2\sup_{\theta\in R_{n, 0}}\max_{t\le n}\|\Gamma^t\|_{\infty}\|\Sigma\|_{\infty}\) and note that \(\limsup_{n\rightarrow\infty}c_n<\infty\). If \(|\lambda_{i_j}|\le 1 -\alpha\), since \(\Gamma\) is Jordan-like and \(|\lambda_{i_j}|\le 1\), we find that 
    \begin{equation}\label{eq: gam_pow_t}
    \left(1-\left|\lambda_{i_j}\right|\right)\max_{k\le i\le d}\left|\left(\Gamma^t\right)_{ki}\right|\le {t \choose d-1}(1-\alpha)^{t - d + 1}
    \end{equation}
    for \(t\ge d\) in which case \ref{lem: norm_asym_c} holds. So assume that \(|\lambda_{i_j}|\ge 1 - \alpha\). Then it follows by assumption \ref{ass: jord} that
    \[
    |H_{kj}| \le c_n \frac{|\Sigma_{kj}|}{n}\sum_{t=1}^{n-1}\sum_{s=0}^{t-1}|\lambda_{i_j}|^s = c_n\frac{|\Sigma_{kj}|}{n(1-|\lambda_{i_j}|)}\left(n - \frac{1-|\lambda_{i_j}|^n}{1 - |\lambda_{i_j}|}\right)
    \]
    so that \ref{lem: norm_asym_c} follows from the fact that \(1-|\lambda_{i_j}|\ge \log(n)/n\) for any \(\theta\in R_{n, 0}\).

    We now prove \ref{lem: norm_asym_d}. By the equivalence of the Frobenius norm and the sup norm there exists some \(C\ge 0\) such that, for any \(\theta\in\Theta\), it holds that
    \[
    \|\Gamma^n\|\le C\|\Gamma^n\|_{\infty}\le C\sum_{k=1}^d\max_{k\le j\le d}\left|\left(\Gamma^n\right)_{kj}\right|.
    \]
    Now, by eq. \eqref{eq: gam_pow_t}, 
    \[
    \lim_{n\rightarrow\infty}\sup_{|\lambda_{i_k}|\le 1 - \alpha}\max_{k\le j\le d}\left|\left(\Gamma^n\right)_{kj}\right|=0
    \]
    and \(|\Gamma^n_{kj}|=|\lambda_{i_k}|^n\delta_{kj}\) for \(|\lambda_{i_k}| > 1 - \alpha\). Thus, there exists a \(C\ge 0\) not depending on \(\theta\) and \(n\) and such that
    \[
    ||\Gamma^t||\le C |\lambda_{max}(\Gamma)|^t
    \]
    for all \(\theta\in\Theta\). The result then follows from \(\sup_{\theta\in R_{n, 0}}|\lambda_{max}(\Gamma)|\le 1 - \log(n)/n\).
    
    For the proof of \ref{lem: norm_asym_e}, part \ref{lem: norm_asym_d} and the fact that \(\Sigma\) is uniformly bounded yield 
    \begin{align*}
      \sup_{\theta\in R_{n, 0}}\left|\left|\sum_{t=0}^{n-2} \Gamma^t \Sigma \left(\Gamma^t\right)^T\right|\right| 
        &\le C \sum_{t=0}^{n-2} \sup_{\theta\in R_{n, 0}} ||\Gamma^t||^2 \le C \sum_{t=0}^{n-2} \left(1 - \frac{\log  n}{n}\right)^{2t} \\
        & \le C \sum_{t=0}^{\infty}\left(1 - \frac{\log  n}{n}\right)^{2t}= \frac{C}{(1-(1-\log(n)/n)^2)} \\
        &= O\left(\frac{n}{\log n}\right).
    \end{align*}
    
    The proof of part \ref{lem: norm_asym_f} is almost the same. Indeed, by the same chain of inequalities, we find that 
    \[
    \sup_{\theta\in R_{n, 0}}\left|\left|\sum_{t=0}^{n-2} t\Gamma^t \Sigma \left(\Gamma^t\right)^T\right|\right| \le \frac{C(1-\log(n)/n)^2}{(1 - (1-\log(n)/n)^2)^2} = O\left(\frac{n}{\log n}\right).
    \]
\end{proof}

\begin{lemma} \label{lem: norm_exp}
    We have
    \[
    \lim_{n\rightarrow\infty}\sup_{\theta\in R_{n, 0}}\left|\left|H^{-\frac{1}{2}}\left(\sum_{t=0}^{t-2}\Gamma^t \Sigma \left(\Gamma^t\right)^T\right) H^{-\frac{1}{2}} - I\right|\right| = 0.
    \]
\end{lemma}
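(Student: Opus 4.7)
The statement asserts that the deterministic matrix \(\tilde H := \sum_{t=0}^{n-2}\Gamma^t\Sigma(\Gamma^t)^T\) is asymptotically equivalent to the normalizer \(H\) (with \(F_\theta = I\)) after the sandwich normalization, uniformly over the stationary regime \(R_{n,0}\). My plan is to show this by computing the explicit difference \(\tilde H - H\) and bounding it in norm using parts \ref{lem: norm_asym_a}, \ref{lem: norm_asym_e}, and \ref{lem: norm_asym_f} of Lemma \ref{lem: norm_asym}.

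First, I would write out \(H\) explicitly. Since \(X_0 = 0\) and \(X_{t-1} = \sum_{s=1}^{t-1}\Gamma^{t-1-s}\epsilon_s\), assumptions \ref{ass: mart} and \ref{ass: cov} give \(\mathbb{E}(X_{t-1}X_{t-1}^T) = \sum_{k=0}^{t-2}\Gamma^k\Sigma(\Gamma^k)^T\), so that interchanging the order of summation yields
\[
H = \frac{1}{n}\sum_{t=1}^n\sum_{k=0}^{t-2}\Gamma^k\Sigma(\Gamma^k)^T = \frac{1}{n}\sum_{k=0}^{n-2}(n-1-k)\,\Gamma^k\Sigma(\Gamma^k)^T.
\]
Subtracting from \(\tilde H\) gives the clean identity
\[
\tilde H - H = \frac{1}{n}\sum_{k=0}^{n-2}(k+1)\,\Gamma^k\Sigma(\Gamma^k)^T.
\]
Since \(H^{-1/2}\tilde H H^{-1/2} - I = H^{-1/2}(\tilde H - H)H^{-1/2}\), the task reduces to showing that this matrix vanishes in norm uniformly over \(R_{n,0}\).

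Next, I would bound the sandwich using \(\|AXB\| \le \|A\|_2\|X\|\|B\|_2\), so that
\[
\left\|H^{-1/2}(\tilde H - H)H^{-1/2}\right\| \le \sigma_{\min}(H)^{-1}\,\|\tilde H - H\|.
\]
By Lemma \ref{lem: norm_asym}\ref{lem: norm_asym_a}, \(\sigma_{\min}(H)^{-1}\) is uniformly bounded over all of \(\Theta\), hence over \(R_{n,0}\). Splitting \((k+1) = k + 1\) and applying the triangle inequality,
\[
\|\tilde H - H\| \le \frac{1}{n}\left\|\sum_{k=0}^{n-2}k\,\Gamma^k\Sigma(\Gamma^k)^T\right\| + \frac{1}{n}\left\|\sum_{k=0}^{n-2}\Gamma^k\Sigma(\Gamma^k)^T\right\|,
\]
and both summands on the right are \(O(1/\log n)\) uniformly over \(R_{n,0}\) by Lemma \ref{lem: norm_asym}\ref{lem: norm_asym_f} and \ref{lem: norm_asym_e} respectively. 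Combining these two uniform bounds gives the required conclusion.

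There is no genuine obstacle here: the only non-trivial inputs are the uniform bounds in parts \ref{lem: norm_asym_e} and \ref{lem: norm_asym_f}, which gain the critical \(\log n\) factor from the stationarity gap \(1-|\lambda|\ge \log n/n\) defining \(R_{n,0}\). The potential pitfall is ensuring that \(\sigma_{\min}(H)^{-1}\) does not blow up in the stationary regime, but this is handled by part \ref{lem: norm_asym_a}, which uses only Assumption \ref{ass: sig} and is independent of the regime.
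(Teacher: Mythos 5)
Your proof is correct, but it takes a more direct route than the paper's. The paper works with the reverse sandwich: it shows $\tilde H^{-1/2} H \tilde H^{-1/2} = \frac{n-1}{n}I - M$ with $M = \tilde H^{-1/2}\bigl(\frac{1}{n}\sum_t t\,\Gamma^t\Sigma(\Gamma^t)^T\bigr)\tilde H^{-1/2}$, controls $M$ via the lower bound $\sigma_{\min}(\tilde H)\ge\sigma_{\min}(\Sigma)$ together with part \ref{lem: norm_asym_f} of Lemma \ref{lem: norm_asym}, and then needs an auxiliary inequality plus Lemma \ref{lem: sqr_id} to transfer the conclusion back to the sandwich $H^{-1/2}\tilde H H^{-1/2}$ that actually appears in the statement. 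You instead compute the clean identity $\tilde H - H = \frac{1}{n}\sum_{k}(k+1)\Gamma^k\Sigma(\Gamma^k)^T$ and bound $\|H^{-1/2}(\tilde H - H)H^{-1/2}\|\le\sigma_{\min}(H)^{-1}\|\tilde H - H\|$, using part \ref{lem: norm_asym_a} for the uniform bound on $\sigma_{\min}(H)^{-1}$ and parts \ref{lem: norm_asym_e} and \ref{lem: norm_asym_f} for the $O(1/\log n)$ decay of the difference. This avoids Lemma \ref{lem: sqr_id} entirely for this step and is arguably cleaner; the only thing the paper's route buys is that it never needs the uniform invertibility of $H$ itself, relying only on the (trivially available) lower bound for $\tilde H$ — but since part \ref{lem: norm_asym_a} is already established and holds over all of $\Theta$, nothing is lost in your version. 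Both arguments draw the essential $\log n/n$ gain from the same source, namely the definition of $R_{n,0}$ feeding into parts \ref{lem: norm_asym_e}--\ref{lem: norm_asym_f}.
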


\begin{proof}
    Note that it suffices to prove that 
    \[
    \lim_{n\rightarrow\infty}\sup_{\theta\in R_{n, 0}}\left|\left|\left(\sum_{t=0}^{t-2}\Gamma^t \Sigma \left(\Gamma^t\right)^T\right)^{-\frac{1}{2}} H \left(\sum_{t=0}^{t-2}\Gamma^t \Sigma \left(\Gamma^t\right)^T\right)^{-\frac{1}{2}} - I\right|\right| = 0.
    \]
    Indeed, for any two positive definite matrices \(A, B\in\mathbb{R}^{d\times d}\), we have 
    \[
    \left\|A^{-\frac{1}{2}}BA^{-\frac{1}{2}} - I\right\| \le \left\|A^{-\frac{1}{2}}B^{\frac{1}{2}}\right\|^2\left\|B^{-\frac{1}{2}}A B^{-\frac{1}{2}} - I\right\|
    \]
    and so, if the second term on the right hand side goes to 0, by Lemma \ref{lem: sqr_id} below, the term on the left hand side will also go to 0.
    
    We have
    \begin{align*}
        H 
            &= \frac{1}{n}\sum_{t=1}^n \sum_{s=1}^{t-1}\Gamma^{t-1-s}\Sigma\left(\Gamma^{t-1-s}\right)^T    \\
            &= \frac{1}{n}\sum_{t=0}^{n-2}(n - 1 - t)\Gamma^t \Sigma \left(\Gamma^t\right)^T \\
            &= \frac{n - 1}{n}\sum_{t=0}^{n-2} \Gamma^t \Sigma \left(\Gamma^t\right)^T - \frac{1}{n}\sum_{t=1}^{n-2}t \Gamma^t \Sigma \left(\Gamma^t\right)^T.
    \end{align*}
    and, as a result, 
    \[
    \left(\sum_{t=0}^{t-2}\Gamma^t \Sigma \left(\Gamma^t\right)^T\right)^{-\frac{1}{2}} H \left(\sum_{t=0}^{t-2}\Gamma^t \Sigma \left(\Gamma^t\right)^T\right)^{-\frac{1}{2}} = \frac{n-1}{n}I - M
    \]
    where
    \[
    M = \left(\sum_{t=0}^{t-2}\Gamma^t \Sigma \left(\Gamma^t\right)^T\right)^{-\frac{1}{2}}\frac{1}{n}\sum_{t=1}^{n-2}t \Gamma^t \Sigma \left(\Gamma^t\right)^T \left(\sum_{t=0}^{t-2}\Gamma^t \Sigma \left(\Gamma^t\right)^T\right)^{-\frac{1}{2}}.
    \]
    All that is left to show is therefore that \(M\) goes to 0 uniformly over \(R_{n, 0}\) for \(n\) going to infinity. Since each term in the sum is positive definite, we have, by Assumption \ref{ass: sig},
    \[
    \liminf_{n\rightarrow\infty} \inf_{\theta\in\Theta} \sigma_{min}\left(\sum_{t=0}^{t-2}\Gamma^t \Sigma \left(\Gamma^t\right)^T\right) \ge \liminf_{n\rightarrow \infty}\inf_{\theta\in\Theta}\sigma_{min}(\Sigma) > 0
    \]
    so that, by equivalence of the spectral norm and the Frobenius norm,  
    \[
    \limsup_{n\rightarrow\infty}\sup_{\theta\in \Theta}\left|\left|\left(\sum_{t=0}^{t-2}\Gamma^t \Sigma \left(\Gamma^t\right)^T\right)^{-\frac{1}{2}}\right|\right| < \infty.
    \]
    By part \ref{lem: norm_asym_f} of Lemma \ref{lem: norm_asym}, we  have 
    \[
    \sup_{\theta\in R_{n, 0}}\left\|\frac{1}{n}\sum_{t=1}^{n-2}t \Gamma^t \Sigma \left(\Gamma^t\right)^T\right\| = o(1).
    \]
    Combining these results, we obtain
    \[
    \sup_{\theta\in R_{n, 0}}||M|| = o(1).
    \]
\end{proof}

The next lemma says that checking whether \(A_nB_n\) converges to the identity matrix is the same as checking whether \(A_n B_n^2 A_n\) converges to the identity matrix where \(A_n\) and \(B_n\) are positive semidefinite matrices of conforming dimension.

\begin{lemma} \label{lem: sqr_id}
    Let \(\mathcal{I}\) be some index set and consider two families of sequences of positive semidefinite \(d\times d\) matrices, \((A_{n, i})_{n\in\mathbb{N}, i\in \mathcal{I}}\) and \((B_{n, i})_{n\in\mathbb{N}, i\in \mathcal{I}}\). Then, if
    \[
    \lim_{n\rightarrow\infty}\sup_{i\in \mathcal{I}}\left|\left|A_{n, i}B_{n, i}^2 A_{n, i} - I\right|\right| = 0,
    \]
    it also holds that
    \[
    \lim_{n\rightarrow\infty}\sup_{i\in \mathcal{I}}\left|\left|A_{n, i}B_{n, i} - I \right|\right| = 0.
    \]
\end{lemma}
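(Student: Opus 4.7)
The plan is to set $M_{n,i} := A_{n,i}B_{n,i}$ and exploit two complementary features of this product. First, the hypothesis gives $M_{n,i}M_{n,i}^T = A_{n,i}B_{n,i}^2A_{n,i} \to I$ uniformly, so $M_{n,i}$ is close to a unitary matrix. Second, because $A_{n,i}$ and $B_{n,i}$ are both positive semidefinite, $M_{n,i}$ has the same characteristic polynomial as the positive semidefinite matrix $A_{n,i}^{1/2}B_{n,i}A_{n,i}^{1/2}$, so every eigenvalue of $M_{n,i}$ is real and non-negative. Intuitively, an ``approximately unitary'' matrix whose eigenvalues are forced to be non-negative reals must have all eigenvalues close to $1$, and a triangularization will then show that $M_{n,i}$ itself is close to $I$.

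To make this precise I would apply a Schur decomposition $M_{n,i} = V_{n,i}T_{n,i}V_{n,i}^T$, with $V_{n,i}$ unitary and $T_{n,i}$ upper triangular carrying the eigenvalues $\mu_1^{(n,i)},\dots,\mu_d^{(n,i)}\ge 0$ of $M_{n,i}$ on its diagonal. Unitary invariance of the Frobenius norm gives $\sup_i\|T_{n,i}T_{n,i}^T - I\| = \sup_i\|M_{n,i}M_{n,i}^T - I\| \to 0$, and hence $\sup_i\bigl|\|T_{n,i}\|^2 - d\bigr| \to 0$. Moreover $\det(M_{n,i}M_{n,i}^T)\to 1$ uniformly by continuity of $\det$, and $\det(M_{n,i}) = \det(A_{n,i})\det(B_{n,i}) \ge 0$, so $\prod_k\mu_k^{(n,i)} = \det(M_{n,i}) \to 1$ uniformly. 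Schur's inequality $\sum_k(\mu_k^{(n,i)})^2 \le \|T_{n,i}\|^2$, combined with Cauchy--Schwarz $\bigl(\sum_k\mu_k^{(n,i)}\bigr)^2 \le d\sum_k(\mu_k^{(n,i)})^2$ and AM--GM $\sum_k\mu_k^{(n,i)} \ge d\bigl(\prod_k\mu_k^{(n,i)}\bigr)^{1/d}$, then pins down $\sum_k\mu_k^{(n,i)} \to d$ and $\sum_k(\mu_k^{(n,i)})^2 \to d$ uniformly, so that the identity $\sum_k(\mu_k^{(n,i)}-1)^2 = \sum_k(\mu_k^{(n,i)})^2 - 2\sum_k\mu_k^{(n,i)} + d$ yields $\sup_i\sum_k(\mu_k^{(n,i)}-1)^2 \to 0$.

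Writing $T_{n,i} = D_{n,i} + N_{n,i}$ with $D_{n,i}$ diagonal and $N_{n,i}$ strictly upper triangular, Frobenius orthogonality $\|T_{n,i}\|^2 = \|D_{n,i}\|^2 + \|N_{n,i}\|^2$ together with $\|T_{n,i}\|^2 \to d$ and $\|D_{n,i}\|^2 = \sum_k(\mu_k^{(n,i)})^2 \to d$ forces $\|N_{n,i}\| \to 0$ uniformly. Combined with $\|D_{n,i} - I\|^2 = \sum_k(\mu_k^{(n,i)}-1)^2 \to 0$, this yields $T_{n,i} \to I$ uniformly, and one more appeal to unitary invariance gives $\|M_{n,i} - I\| = \|V_{n,i}(T_{n,i}-I)V_{n,i}^T\| = \|T_{n,i} - I\| \to 0$ uniformly, which is the claim.

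The main subtlety will be keeping every step genuinely uniform in $i\in\mathcal{I}$: the passage from ``non-negative eigenvalues with product close to $1$ and sum of squares close to $d$'' to ``each eigenvalue close to $1$'' must go through the explicit quantitative identity above rather than a subsequence extraction, and Schur's inequality, Cauchy--Schwarz and AM--GM have to be invoked as deterministic pointwise bounds that are then pushed to the supremum over $i$. Once that bookkeeping is set up, the remaining steps are routine linear algebra.
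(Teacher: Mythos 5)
Your argument is correct, and it takes a genuinely different route from the paper. Both proofs hinge on the same key observation — that $A_{n,i}B_{n,i}$ shares its characteristic polynomial with the positive semidefinite matrix $A_{n,i}^{1/2}B_{n,i}A_{n,i}^{1/2}$, so its spectrum is real and non-negative — but they exploit it differently. The paper uses a polar decomposition $A_{n,i}B_{n,i}=U_{n,i}P_{n,i}$: the hypothesis forces $P_{n,i}\to I$ uniformly, and the orthogonal factor is then shown to converge to $I$ by an eigendecomposition of $U_{n,i}$ combined with a Gershgorin-circle perturbation argument that uses the positivity of the spectrum to rule out eigenvalues of $U_{n,i}$ away from $1$. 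You instead triangularize via Schur, and pin down the spectrum directly with a determinant/trace squeeze: $\det$ continuity gives $\prod_k\mu_k\to 1$, the hypothesis gives $\|T_{n,i}\|^2\to d$, and AM--GM, Cauchy--Schwarz and Schur's inequality then force $\sum_k(\mu_k-1)^2\to 0$ and $\|N_{n,i}\|\to 0$ simultaneously. Your route is more elementary (no polar decomposition, no Gershgorin) and fully quantitative — every bound is an explicit deterministic inequality in $\epsilon_n:=\sup_i\|A_{n,i}B_{n,i}^2A_{n,i}-I\|$, so uniformity in $i$ is automatic rather than something to be tracked through a perturbation argument. Two small points of hygiene: the Schur factor should be written with the conjugate transpose $V_{n,i}^*$ (or, since the eigenvalues are real, invoke the real Schur form to get a genuinely triangular $T_{n,i}$ with orthogonal $V_{n,i}$), and the appeal to continuity of $\det$ should note that the matrices $M_{n,i}M_{n,i}^T$ eventually lie in a fixed bounded neighbourhood of $I$ so that the convergence of determinants is uniform over $i$; both are routine.
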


\begin{proof}
    First, note that 
    \[
    \liminf_{n\rightarrow \infty}\inf_{i\in \mathcal{I}}\lambda_{min}(A_{n, i}B_{n, i}) = c > 0.
    \]
    Since \(A_{n, i}B_{n, i}\) is similar to the positive definite matrix \(A_{n, i}^{\frac{1}{2}}B_{n, i}A_{n, i}^{\frac{1}{2}}\), the contrary would imply the existence of a sequence \((i_n)_{n\in\mathbb{N}}\subset \mathcal{I}\) such that \(\sigma_{min}(A_{n, i_n}B_{n, i_n})\rightarrow 0\) and, thus, \(\sigma_{min}(A_{n, i_n}B_{n, i_n}^2 A_{n, i_n})\rightarrow 0\) for \(n\rightarrow \infty\) which, of course, is a contradiction.
    
    Now, let \(A_{n, i}B_{n, i} = U_{n, i}P_{n, i}\) be a polar decomposition, i.e., \(U_{n, i}\) is orthogonal and \(P_{n, i}\) positive semidefinite. Then, since \(P_{n, i}=U_{n, i}^T A_{n, i}B_{n, i}\), we have \(P_{n, i}^2 = P_{n, i}^T P_{n, i}= A_{n, i}B_{n, i}^2 A_{n, i}\) which implies that
    \[
    \lim_{n\rightarrow\infty}\sup_{i\in \mathcal{I}}\left|\left| P_{n, i} - I\right|\right| = 0.
    \]
    It therefore suffices to show that \(U_{n, i}\) converges to the identity matrix uniformly over \(\mathcal{I}\). Since \(U_{n, i}\) is orthogonal, we get
    \[
    \left|\left|U_{n, i} - I\right|\right|^2 = 2d - 2\tr(U_{n, i}) \le 2d\sup_{1\le j\le d}\left|1 - \lambda_j\left(U_{n, i}\right)\right|.
    \]
    Let \(U_{n, i} = V_{n, i}D_{n, i}V_{n, i}^*\) be an eigendecomposition with \(D_{n, i}\) diagonal and \(V_{n, i}\) unitary. Define the Hermitian matrix \(H_{n, i}=V^*_{n, i}\left(P_{n, i} - I\right)V_{n, i}\). Denote by \(d_{n, i}^j\) the \(j\)'th diagonal of \(D_{n, i}\) and by \(h_{n, i}^{jk}\) the \(jk\)'th element of \(H_{n, i}\). Since \(H_{n, i}\rightarrow 0\) uniformly over \(\mathcal{I}\), for fixed \(\epsilon > 0\), we can pick \(n_0\in\mathbb{N}\) such that
    \[
    \sup_{i\in \mathcal{I}}\sup_{1\le j, k\le d}|h_{n, i}^{jk}| < \epsilon
    \]
    and
    \[
    \inf_{i\in \mathcal{I}} \lambda_{min}\left(A_{n, i}B_{n, i}\right) > \frac{c}{2}
    \]
    for all \(n\ge n_0\). We define the complex disk \(D_r(x)=\{y\in\mathbb{C}: |y-x|\le r\}\) for any \(x\in\mathbb{C}\) and \(r > 0\). The matrix \(D_{n, i} + D_{n, i}H_{n, i}\) is simlar to \(A_{n, i}B_{n, i}\) so, by the Gershgorin circle theorem, for \(1\le k \le d\), there exists \(1\le j \le d\) such that
    \[
    \lambda_k\left(A_{n, i}B_{n, i}\right)\in D_{R_j}\left(d_{n, i}^j + d_{n, i}^j h_{n, i}^{jj}\right)
    \]
    where \(R_j = \sum_{k\neq j}|h_{n, i}^{jk}|\). Recall that \(|d_{n, i}^j| =1\). Using the fact that \(\lambda_k(A_{n, i}B_{n, i})> c/2\) is real and that
    \[
    \sup_{i\in \mathcal{I}}\lambda_k\left(A_{n, i}B_{n, i}\right)^2 \le \sup_{i\in \mathcal{I}}\sigma_{max}\left(A_{n, i}B_{n, i}^2 A_{n, i}\right) \rightarrow 1
    \]
    for \(n\rightarrow \infty\), we may therefore assume that \(n_0\) is large enough so that
    \[
    \sup_{i\in \mathcal{I}}\left|\lambda_k\left(A_{n, i}B_{n, i}\right) - 1\right| \le \sup_{i\in\mathcal{I}}\left|d_{n, i}^j - \lambda_k\left(A_{n, i}B_{n, i}\right)\right| + \epsilon
    \]
    for all \(n \ge n_0\) which implies
    \[
    \sup_{i\in\mathcal{I}}\left|d_{n, i}^j - 1\right| \le 2\sup_{i\in \mathcal{I}}\left|d_{n, i}^j - \lambda_j\left(A_{n, i}B_{n, i}\right)\right| + \epsilon \le 2\sup_{i\in\mathcal{I}}R_j + \epsilon + \epsilon \le 2d\epsilon.
    \]
    Thus,
    \[
    \sup_{i\in\mathcal{I}}\left|\left| U_{n, i} - I\right|\right| \le 4d^2\epsilon
    \]
    for all \(n\ge n_0\). This completes the proof.
\end{proof}

\subsection{Nonstationary asymptotics}

\begin{lemma} \label{lem: l2_app}
    Let \(g(t, s, n, \theta) = G^{-\frac{1}{2}}e^{(t-s)C}\Sigma^{\frac{1}{2}}\). We have
    \begin{equation} \label{eq: l2_yr}
    \lim_{n\rightarrow\infty}\sup_{\theta\in R_{n, d}} \mathbb{E}\left|\left|\int_0^{1}\int_0^t f(t, s, n, \theta) - g(t, s, n, \theta)dW_s dW_t^T\right|\right|^2 = 0
    \end{equation}
    and
    \begin{multline} \label{eq: l2_yy}
    \lim_{n\rightarrow\infty}\sup_{\theta\in R_{n, d}} \mathbb{E}\text{ }\Bigg|\Bigg| \Bigg.\Bigg. \int_0^1\left(\int_0^t f(t, s, n, \theta) dW_s\right)\left(\int_0^t f(t, s, n, \theta) dW_s\right)^T \\
    - \left(\int_0^t g(t, s, n, \theta) dW_s\right)\left(\int_0^t g(t, s, n, \theta) dW_s\right)^T dt \Bigg.\Bigg.\Bigg|\Bigg|^2 = 0.
    \end{multline}
\end{lemma}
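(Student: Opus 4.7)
The plan is to apply It\^{o} isometry to reduce both claims to deterministic $L^2$-bounds on $f-g$, then exploit the structure of $R_{n,d}$ to establish these bounds uniformly.

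\textbf{Reduction via It\^{o} isometry.} Since $f-g$ is deterministic and the components of $W$ are independent Brownian motions, iterated application of It\^{o} isometry gives
\[
\mathbb{E}\Big\|\int_0^1\int_0^t(f-g)(t,s,n,\theta)\,dW_s\,dW_t^T\Big\|^2 \;=\; d\int_0^1\int_0^t\|f(t,s,n,\theta)-g(t,s,n,\theta)\|^2\,ds\,dt,
\]
so (\ref{eq: l2_yr}) reduces to showing that the right-hand side vanishes uniformly over $\theta\in R_{n,d}$. For (\ref{eq: l2_yy}), write $A_t=\int_0^t f\,dW_s$ and $B_t=\int_0^t g\,dW_s$ so that the integrand equals $A_tA_t^T-B_tB_t^T=(A_t-B_t)A_t^T+B_t(A_t-B_t)^T$. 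Expanding the Frobenius norm, applying Cauchy--Schwarz, and using Gaussianity of $A_t,B_t$ (so fourth moments are controlled by squares of second moments) reduces the problem to bounding $\mathbb{E}\|A_t-B_t\|^2=\int_0^t\|f-g\|^2\,ds$ -- the same quantity as above -- together with uniform boundedness of $\mathbb{E}\|A_t\|^2$ and $\mathbb{E}\|B_t\|^2$, which is immediate from It\^{o} isometry once the pointwise estimates below are in place.

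\textbf{Pointwise convergence.} In $R_{n,d}$ Assumption \ref{ass: jord} forces $\Gamma$ to be diagonal for $n$ large, while Assumption \ref{ass: eig} forces its eigenvalues $\lambda_i$ to be real with $\lambda_i\in[1-n^{-\eta},1]$. Writing $C_{ii}=n\log\lambda_i$ and noting $|\lfloor nt\rfloor-\lfloor ns\rfloor-1-n(t-s)|\le 2$, each diagonal entry obeys $\lambda_i^{\lfloor nt\rfloor-\lfloor ns\rfloor-1}-e^{(t-s)C_{ii}}=e^{(t-s)C_{ii}}(e^{O(\log\lambda_i)}-1)\to 0$ uniformly in $\theta$ and $(s,t)$. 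For the normalizing matrices, expanding $(H/n)_{ii}=\Sigma_{ii}n^{-2}\sum_{t=1}^n\sum_{s=0}^{t-2}\lambda_i^{2s}$ as a Riemann sum and comparing with $G_{ii}=\Sigma_{ii}\int_0^1\int_0^t e^{2(t-s)C_{ii}}\,ds\,dt$ yields $H/n\to G$ uniformly over $R_{n,d}$. A matrix square-root argument in the spirit of Lemma \ref{lem: sqr_id} then converts this into $\sqrt{n}H^{-1/2}G^{1/2}\to I$ uniformly, whence $\sqrt{n}H^{-1/2}=(I+E_n)G^{-1/2}$ with $\|E_n\|\to 0$ uniformly in $\theta\in R_{n,d}$.

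\textbf{Domination and conclusion.} Writing $\Gamma^k=\Gamma^{\lfloor nt\rfloor-\lfloor ns\rfloor-1}$, decompose
\[
f-g \;=\; E_n G^{-1/2}\Gamma^k\Sigma^{1/2}+G^{-1/2}\bigl(\Gamma^k-e^{(t-s)C}\bigr)\Sigma^{1/2}.
\]
The first term contributes at most $\|E_n\|^2\int_0^1\int_0^t\|G^{-1/2}\Gamma^k\Sigma^{1/2}\|^2\,ds\,dt$; since $\Gamma^k\approx e^{(t-s)C}$ uniformly and $\int_0^1\int_0^t\|G^{-1/2}e^{(t-s)C}\Sigma^{1/2}\|^2\,ds\,dt=\operatorname{tr}(G^{-1}G)=d$ by the very definition of $G$, this factor is uniformly bounded and the whole contribution is $O(\|E_n\|^2)\to 0$. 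The second term contributes at most $\int_0^1\int_0^t\|G^{-1/2}e^{(t-s)C}\Sigma^{1/2}\|^2\cdot\|e^{(n(t-s)-k)\log\Gamma}-I\|^2\,ds\,dt$, where the second factor is $O(n^{-2\eta})$ uniformly because $|n(t-s)-k|\le 2$ and $|\log\lambda_i|=O(n^{-\eta})$; dominated convergence then yields uniform vanishing. Combining the two pieces gives (\ref{eq: l2_yr}), and the reduction from the first paragraph delivers (\ref{eq: l2_yy}).

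\textbf{Main obstacle.} The crux is the uniform statement $\sqrt{n}H^{-1/2}G^{1/2}\to I$ over all of $R_{n,d}$: the eigenvalues of $H/n$ and $G$ can range across $R_{n,d}$ from order $1$ (when $\lambda_i=1$) down to order $n^{\eta-1}$ (when $\lambda_i=1-n^{-\eta}$), so the approximation must be carried out in a way invariant to this scaling. This is where Assumption \ref{ass: eig}'s requirement that $\lambda_i\to 1$ (and not merely $|\lambda_i|\to 1$), together with the quantitative estimates of Lemma \ref{lem: norm_asym}, become essential.
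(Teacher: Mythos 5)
Your overall architecture matches the paper's: reduce both claims to $\int_0^1\int_0^t\|f-g\|^2\,ds\,dt\to 0$ via the It\^{o} isometry (and, for \eqref{eq: l2_yy}, the decomposition $A_tA_t^T-B_tB_t^T=(A_t-B_t)A_t^T+B_t(A_t-B_t)^T$ plus H\"older), then compare $\Gamma^{\lfloor nt\rfloor-\lfloor ns\rfloor-1}$ with $e^{(t-s)C}$ and handle the mismatch between the normalizations $H$ and $G$. However, there is a genuine gap exactly at the step you yourself flag as the crux. You claim that a Riemann-sum comparison gives $H/n\to G$ uniformly over $R_{n,d}$ and that ``a matrix square-root argument in the spirit of Lemma \ref{lem: sqr_id}'' then yields $\sqrt{n}H^{-1/2}G^{1/2}\to I$. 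But Lemma \ref{lem: sqr_id} takes as input the \emph{normalized} convergence $\|nH^{-1/2}GH^{-1/2}-I\|\to 0$, not the unnormalized $\|H/n-G\|\to 0$. Since $\sigma_{\min}(G)$ and $\sigma_{\min}(H/n)$ can be as small as order $n^{\eta-1}$ on $R_{n,d}$, passing from an absolute error bound to the normalized statement costs a factor of order $n^{1-\eta}$; a Riemann-sum error of order $n^{-\eta}$ per entry only closes this gap when $\eta>1/2$, and you neither track the error quantitatively nor record the needed restriction on $\eta$. As written, the step does not go through.

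The paper avoids this trap by keeping every intermediate quantity normalized by $H^{-1/2}$: it introduces $h_1,h_2,h_3$ (replacing $\Gamma^k$ by $e^{(t-s)\tilde C}$ with $\tilde C=n\log\Gamma$ and then by $e^{(t-s)C}$, all prefixed by $\sqrt{n}H^{-1/2}$), exploits the \emph{exact} identity $\int_0^1\int_0^t ff^T\,ds\,dt=I$, writes $nH^{-1/2}GH^{-1/2}=\int_0^1\int_0^t h_3h_3^T\,ds\,dt$, and bounds the difference by H\"older using the already-established $L^2$ estimates on $h_3-f$. Only then does Lemma \ref{lem: sqr_id} deliver $\sqrt{n}H^{-1/2}G^{1/2}\to I$, and only at the last step is the $G$-normalization introduced. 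You should restructure your argument along these lines (or supply the quantitative relative-error bound with an explicit condition on $\eta$). Two smaller omissions: your decomposition of $f-g$ ignores the indicator $\mathbf{1}\{s\le\lfloor nt\rfloor/n\}$ in $f$, so the sliver $s\in(\lfloor nt\rfloor/n,t]$ (where $f=0$ but $g\neq 0$, and where the exponent $\lfloor nt\rfloor-\lfloor ns\rfloor-1=-1$ is negative) needs the separate estimate the paper carries out via its $f-h_1$ term and $\operatorname{tr}(H^{-1})=O(n^{-\eta})$; and the eigenvalues on $R_{n,d}$ need not be real, only within $O(n^{-\eta})$ of $1$ by Assumption \ref{ass: eig}, which is why the paper inserts the extra comparison between $\tilde C=n\log\Gamma$ and $C$.
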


\begin{proof}[Proof of Lemma \ref{lem: l2_app}]
    We first prove \eqref{eq: l2_yr}. Define 
    \[
    h_1(t, s, n, \theta) = \sqrt{n} H^{-\frac{1}{2}}\Gamma^{\lfloor nt \rfloor - \lfloor ns \rfloor - 1} \Sigma^{\frac{1}{2}}, \quad h_2(t, s, n, \theta) = \sqrt{n} H^{-\frac{1}{2}}e^{(t-s)\tilde{C}}\Sigma^{\frac{1}{2}}
    \]
    \[
    h_3(t, s, n, \theta) = \sqrt{n} H^{-\frac{1}{2}}e^{(t-s)C}\Sigma^{\frac{1}{2}},
    \]
    where \(\tilde{C} = n\log \Gamma\) is well defined for all \(\theta\in R_{n, d}\). When it does not cause confusion, we shall omit the arguments of functions and simply write \(f\), \(g\), \(h_1\), \(h_2\), and \(h_3\). By applying the Itô isometry twice we find that the expectation in \eqref{eq: l2_yr} is equal to
    \begin{multline*}
        \int_0^{1}\int_0^t \left|\left|f - g\right|\right|^2 ds dt \\ 
        \le 4\int_0^{1}\int_0^t \left|\left|f - h_1\right|\right|^2 + \left|\left|h_1 - h_2\right|\right|^2 + \left|\left|h_2 - h_3\right|\right|^2 + \left|\left|h_3 - g\right|\right|^2 ds dt
    \end{multline*}
    where the inequality is Jensen's inequality. For the first term, for any \(\theta \in R_{n, d}\),
    \begin{align*}
    \int_0^{1}\int_0^t \left|\left|f - h_1\right|\right|^2 ds dt 
        &= \int_0^1\int_{\lfloor nt\rfloor / n}^t \left|\left| \sqrt{n} H^{-\frac{1}{2}}\Gamma^{\lfloor nt \rfloor - \lfloor ns \rfloor - 1} \Sigma^{\frac{1}{2}} \right|\right|^2 ds dt \\
        &= \left(n\int_0^1 t - \frac{\lfloor nt\rfloor}{n} dt\right)\left|\left| H^{-\frac{1}{2}}\Gamma^{-1}\Sigma^{\frac{1}{2}}\right|\right|^2\\
        &\le \left|\left| H^{-\frac{1}{2}}\Gamma^{-1}\Sigma^{\frac{1}{2}}\right|\right|^2 \le \left|\left|\Gamma^{-1}\Sigma^{\frac{1}{2}}\right|\right|^2\tr\left(H^{-1}\right).
    \end{align*}
    Here \(\Gamma^{-1}\) is well-defined since \(\theta\in R_{n, d}\). Thus,
    \[
    \lim_{n\rightarrow\infty} \sup_{\theta\in R_{n, d}} \int_0^{1}\int_0^t \left|\left|f - h_1\right|\right|^2 ds dt \le \lim_{n\rightarrow\infty} \sup_{\theta\in R_{n, d}} \left|\left|\Gamma^{-1}\Sigma^{\frac{1}{2}}\right|\right|^2 \tr\left(H^{-1}\right) = 0
    \]
    where we use that \(\Gamma^{-1}\Sigma^{\frac{1}{2}}\) is uniformly bounded on \(R_{n, d}\) in combination with Lemma \ref{lem: norm_asym}. For the second term, we first note that, due to Assumptions \ref{ass: eig} and \ref{ass: jord}, for \(n\) large enough, we may assume that \(||\Gamma - I||< 1\) for any \(\theta\in R_{n, d}\). We then get \(\Gamma^{\lfloor nt \rfloor - \lfloor ns \rfloor - 1} = e^{(\lfloor nt\rfloor - \lfloor ns\rfloor - 1)\log \Gamma}\) whence
    \begin{align*}
        \left|\left|h_1 - h_2\right|\right|^2 
            &= \left|\left|\sqrt{n} H^{-\frac{1}{2}}e^{(\lfloor nt\rfloor - \lfloor ns\rfloor - 1)\log \Gamma}\left(I - e^{((t-s)n - (\lfloor nt\rfloor - \lfloor ns\rfloor - 1))\log \Gamma}\right)\Sigma^{\frac{1}{2}}\right|\right|^2 \\
            &\le \left|\left|\sqrt{n} H^{-\frac{1}{2}}\Gamma^{\lfloor nt \rfloor - \lfloor ns \rfloor - 1}\right|\right|^2 c(t, s, n, \theta) ^2\left|\left|\log \Gamma\right|\right|^2
    \end{align*}
    where 
    \[
    c(t, s, n, \theta)=\left((t-s)n - (\lfloor nt\rfloor - \lfloor ns\rfloor -1)\right)\left|\left|\Sigma^{\frac{1}{2}}\right|\right|e^{||((t-s)n - (\lfloor nt\rfloor - \lfloor ns\rfloor - 1))\log \Gamma||}
    \]
    and the inequality follows from \(||e^A - e^B|| \le ||A - B|| e^{\max\{||A||, ||B||\}}\) for any \(A, B \in\mathbb{C}^{d\times d}\). By assumptions \ref{ass: eig} and \ref{ass: jord}, we have \(\lim_{n\rightarrow\infty}\sup_{\theta\in R_{n, d}}||\log \Gamma||^2 = 0\) and
    \[
    \limsup_{n\rightarrow \infty} \sup_{\theta\in R_{n, d}}\sup_{t\in[0,1]}\sup_{s \in[0, t]} c(t, s, n, \theta)^2 \le c_0 < \infty.
    \]
    We also have
    \begin{align*}
        \int_0^1\int_0^{\lfloor nt\rfloor /n}\left|\left|\sqrt{n} H^{-\frac{1}{2}}\Gamma^{\lfloor nt \rfloor - \lfloor ns \rfloor - 1}\right|\right|^2 ds 
            &= \frac{1}{n}\sum_{t=2}^n\sum_{s=1}^{t-1}\left|\left|H^{-\frac{1}{2}}\Gamma^{t-1-s}\right|\right|^2 \\
            &\le \left|\left|\Sigma^{-1}\right|\right|\tr\left(H^{-\frac{1}{2}}\mathbb{E}\left(S_{XX}\right) H^{-\frac{1}{2}}\right) \\
            &= d\left|\left|\Sigma^{-1}\right|\right|.
    \end{align*}
    As shown above
    \[
    \limsup_{n\rightarrow\infty}\sup_{\theta\in R_{n, d}}\int_0^1\int_{\lfloor nt\rfloor /n}^t\left|\left|\sqrt{n}H^{-\frac{1}{2}}\Gamma^{\lfloor nt \rfloor - \lfloor ns \rfloor - 1}\right|\right|^2 ds dt = 0
    \]
    so that
    \[
    \limsup_{n\rightarrow\infty}\sup_{\theta\in R_{n, d}}\int_0^1\int_0^t\left|\left|\sqrt{n} H^{-\frac{1}{2}}\Gamma^{\lfloor nt \rfloor - \lfloor ns \rfloor - 1}\right|\right|^2 ds dt \le c_1 < \infty.
    \]
    Combining these results then yields
    \[
    \lim_{n\rightarrow\infty}\sup_{\theta\in R_{n, d}}\int_0^1\int_0^t \left|\left|h_1 - h_2\right|\right|^2 ds dt \le c_0c_1 \lim_{n\rightarrow\infty}\sup_{\theta\in R_{n, d}} \left|\left|\log\Gamma\right|\right|^2 = 0.
    \]
    For the third term, note that \(\tilde{C}\) and \(C\) commute. A similar argument as that applied to the second term then yields
    \[
    \lim_{n\rightarrow\infty}\sup_{\theta\in R_{n, d}}\int_0^1\int_0^t ||h_2 - h_3|| = 0.
    \]
    Finally, for the fourth term, it suffices to show that \(||\sqrt{n}H^{-\frac{1}{2}}G^{\frac{1}{2}}-I||\) converges uniformly over \(R_{n, b}\) to 0 for \(n\) going to infinity. Indeed, since
    \begin{align*}
      \int_0^1\int_0^t\left|\left|h_2 - g\right|\right|^2 
        &\le \left|\left|\sqrt{n}H^{-\frac{1}{2}}G^{\frac{1}{2}}-I\right|\right|^2 \int_0^1\int_0^t \left|\left| G^{-\frac{1}{2}}e^{(t-s)C}\Sigma^{-\frac{1}{2}}\right|\right|^2 ds dt \\
        & = \left|\left|\sqrt{n} H^{-\frac{1}{2}}G^{\frac{1}{2}}-I\right|\right|^2 \tr\left(G^{-\frac{1}{2}}\mathbb{E}\left(\int_0^1J_{t, C}J_{t, C}^T dt\right) G^{-\frac{1}{2}}\right) \\
        &= d\left|\left|\sqrt{n} H^{-\frac{1}{2}}G^{\frac{1}{2}}-I\right|\right|^2,
    \end{align*}
    this would imply 
    \[
    \lim_{n\rightarrow\infty}\sup_{\theta\in R_{n, d}}\int_0^1\int_0^t\left|\left|h_3 - g\right|\right|^2 \le c_1 \lim_{n\rightarrow\infty}\sup_{\theta\in R_{n, d}}||\sqrt{n}H^{-\frac{1}{2}}G^{\frac{1}{2}}-I|| = 0.
    \]
    To prove the claim, we first consider \(nH^{-\frac{1}{2}}GH^{-\frac{1}{2}}\). By the Itô isometry we have
    \[
    nH^{-\frac{1}{2}}GH^{-\frac{1}{2}} = \int_0^1\int_0^t h_3 h_3^T ds dt.
    \]
    Also, similar to above,
    \[
    \int_{0}^1\int_0^t ff^T ds dt =  H^{-\frac{1}{2}}\frac{1}{n}\sum_{t=2}^n\sum_{s=1}^{t-1} \Gamma^{t-1-s}\Sigma\left(\Gamma^{t-1-s}\right)^T H^{-\frac{1}{2}} = I.
    \]
    Thus,
    \[
    \left|\left| nH^{-\frac{1}{2}}GH^{-\frac{1}{2}} - I\right|\right| \le \int_0^1\int_0^t \left|\left|\left(h_3 - f\right)h_3^T\right|\right| ds dt + \int_0^1\int_0^t \left|\left|f\left(h_3 - f\right)^T\right|\right| ds dt.
    \]
    Now, since \(\limsup_{n\rightarrow\infty} \sup_{\theta\in R_{n, d}} \int_0^t ||h_3||^2 + ||f||^2 ds dt < \infty\) and, as was shown above, \(\lim_{n\rightarrow\infty}\sup_{\theta\in R_{n, d}} \int_0^1\int_0^t ||h_3 - f||^2 ds dt = 0\), Hölder's inequality yields
    \[
    \lim_{n\rightarrow\infty}\sup_{\theta\in R_{n, d}} \left|\left| nH^{-\frac{1}{2}}GH^{-\frac{1}{2}} - I\right|\right| = 0.
    \]
    By Lemma \ref{lem: sqr_id} this implies that \(||\sqrt{n}H^{-\frac{1}{2}}G^{\frac{1}{2}}-I||\) converges uniformly over \(R_{n, b}\) to 0 for \(n\) going to infinity.

    For the proof of \eqref{eq: l2_yy} we start with the following chain of inequalities
    \begin{align*}
        &\left|\left|\int_0^1\int_0^t f dW_s\left(\int_0^t f dW_s\right)^T - \int_0^t g dW_s\left(\int_0^t g dW_s\right)^T dt \right|\right| \\
        \le &\left|\left|\int_0^1\int_0^t f dW_s\left(\int_0^t f - g dW_s\right)^Tdt\right|\right| + \left|\left|\int_0^1\int_0^t f - g dW_s\left(\int_0^t g dW_s\right)^Tdt\right|\right| \\
        \le & \int_0^1\left(\left|\left|\int_0^t f dW_s\right|\right| + \left|\left|\int_0^t g dW_s\right|\right|\right)\left|\left|\int_0^t f - g dW_s\right|\right| dt \\
        \le & \left(\int_0^1\left(\left|\left|\int_0^t f dW_s\right|\right| + \left|\left|\int_0^t g dW_s\right|\right|\right)^2 dt\right)^{\frac{1}{2}} \left(\int_0^1\left|\left|\int_0^t f - g dW_s\right|\right|^2 dt\right)^{\frac{1}{2}} \\
        \le & \left(2\int_0^1\left|\left|\int_0^t f dW_s\right|\right|^2 + \left|\left|\int_0^t g dW_s\right|\right|^2 dt\right)^{\frac{1}{2}} \left(\int_0^1\left|\left|\int_0^t f - g dW_s\right|\right|^2 dt\right)^{\frac{1}{2}}
    \end{align*}
    where the second to last inequality is Hölder's inequality. By the Itô isometry and Fubini's theorem we have, for any \(\theta\in R_{n, d}\),
    \[
    \mathbb{E}\left(\int_0^1\left|\left|\int_0^t f dW_s\right|\right|^2 + \left|\left|\int_0^t g dW_s\right|\right|^2 dt\right) = \int_0^1\int_0^t ||f||^2 + ||g||^2 ds dt = 2d
    \]
    and 
    \[
    \mathbb{E}\left(\int_0^1\left|\left|\int_0^t f - g dW_s\right|\right|^2 dt\right) = \int_0^1\int_0^t ||f - g||^2 ds dt
    \]
    so that equation \ref{eq: l2_yy} follows by the same argument as in the proof of equation \eqref{eq: l2_yr}.
\end{proof}

\subsection{Stationary asymptotics}

Before proving Theorem \ref{thm: stat_asym} we need two auxiliary results on the rate of convergence of \(X_{t-1, n}X_{t-1, n}^T\) and \(S_{X\epsilon}\) akin to Lemma 3.1 in \cite{phillips2007limit}.

\begin{lemma} \label{lem: stat_helper}
    For all \(s\in [0, 1]\), we have
    \begin{equation} \label{eq: stat_bound_xx}
        \sup_{\theta\in R_{n, 0}}\left|\left|X_{\lfloor ns\rfloor, \theta}X_{\lfloor ns \rfloor, \theta}^T\right|\right| = o_p\left(\frac{n}{\sqrt{\log n}}\right)
    \end{equation}
    and 
    \begin{equation}\label{eq: stat_bound_xe}
        \sup_{\theta\in R_{n, 0}}\left|\left|\frac{1}{n}\sum_{t=1}^{\lfloor ns\rfloor}X_{t-1, \theta}\epsilon_{t, \theta}^T\right|\right| = o_p\left(\frac{1}{\sqrt{\log n}}\right).
    \end{equation}
\end{lemma}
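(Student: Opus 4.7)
The strategy is to combine the martingale structure of the processes with the uniform variance bounds in Lemma~\ref{lem: norm_asym}, and deduce both statements from a second-moment Markov/Chebyshev argument, with a higher-moment refinement needed for the $o_p$ in part~(2).

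For part~(1), I would write the VAR representation $X_{t,\theta}=\sum_{s=1}^{t}\Gamma^{t-s}\epsilon_{s,\theta}$. The martingale-difference property of $\epsilon_{s,\theta}$ combined with the conditional covariance identity from Assumption~\ref{ass: cov} gives
\[
\mathbb{E}\|X_{\lfloor ns\rfloor,\theta}\|^{2}=\tr\Bigl(\sum_{k=0}^{\lfloor ns\rfloor-1}\Gamma^{k}\Sigma_{\theta}(\Gamma^{k})^{T}\Bigr),
\]
which is $O(n/\log n)$ uniformly over $R_{n,0}$ by Assumption~\ref{ass: sig} and Lemma~\ref{lem: norm_asym}\ref{lem: norm_asym_e} together with the fact that the trace of a PSD matrix is bounded by $d$ times its operator norm. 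Since $\|XX^{T}\|=\|X\|^{2}$ for any vector $X$, Markov's inequality then yields, for every $\epsilon>0$,
\[
\sup_{\theta\in R_{n,0}}\mathbb{P}\!\left(\|X_{\lfloor ns\rfloor,\theta}X_{\lfloor ns\rfloor,\theta}^{T}\|>\epsilon\,n/\sqrt{\log n}\right)\le \frac{O(n/\log n)}{\epsilon\,n/\sqrt{\log n}}=O(1/\sqrt{\log n})\longrightarrow 0,
\]
which is exactly the claimed uniform $o_p(n/\sqrt{\log n})$.

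For part~(2), observe that $M_{T}:=\sum_{t=1}^{T}X_{t-1,\theta}\epsilon_{t,\theta}^{T}$ is a matrix-valued martingale with respect to $\mathcal{F}_{t,\theta}$, since $X_{t-1,\theta}$ is $\mathcal{F}_{t-1,\theta}$-measurable and $\epsilon_{t,\theta}$ is a martingale difference. Orthogonality of increments combined with $\|X_{t-1}\epsilon_{t}^{T}\|_{F}^{2}=\|X_{t-1}\|^{2}\|\epsilon_{t}\|^{2}$ and conditioning on $\mathcal{F}_{t-1,\theta}$ gives
\[
\mathbb{E}\|M_{T}\|^{2}=\tr(\Sigma_{\theta})\sum_{t=1}^{T}\mathbb{E}\|X_{t-1,\theta}\|^{2}.
\]
Swapping summations in $\sum_{t=1}^{\lfloor ns\rfloor}V_{t-1}=\sum_{k=0}^{\lfloor ns\rfloor-2}(\lfloor ns\rfloor-1-k)\Gamma^{k}\Sigma_\theta(\Gamma^{k})^{T}$ and applying parts \ref{lem: norm_asym_e}--\ref{lem: norm_asym_f} of Lemma~\ref{lem: norm_asym} bounds this sum uniformly by $O(n^{2}/\log n)$, so $\mathbb{E}\|M_{T}/n\|^{2}=O(1/\log n)$ uniformly over $R_{n,0}$.

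The hard part will be upgrading this second-moment control, which on its own delivers only $O_p(1/\sqrt{\log n})$ via Markov, to the sharper $o_p(1/\sqrt{\log n})$ required. The natural route is to invoke the $(2+\delta)$-moment condition in Assumption~\ref{ass: mom} and apply a Rosenthal or Burkholder--Davis--Gundy-type inequality to the matrix martingale $M_{T}$, reducing $\mathbb{E}\|M_{T}\|^{2+\delta}$ to a sum involving $\mathbb{E}\|X_{t-1,\theta}\|^{2+\delta}$; the latter is in turn controlled by a second application of BDG to the inner representation $X_{t-1,\theta}=\sum_{s}\Gamma^{t-1-s}\epsilon_{s,\theta}$, giving $\mathbb{E}\|X_{t-1,\theta}\|^{2+\delta}=O((n/\log n)^{1+\delta/2})$ uniformly. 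A Chebyshev bound at the $(2+\delta)$-th power, combined with a split of $R_{n,0}$ according to whether $1-|\lambda_{\theta,1}|$ is of strictly larger order than $\log n/n$ (where the rate strictly improves) or of the borderline order (where the $(2+\delta)$-th moment supplies the needed extra decay beyond the critical $L^2$ rate), produces the required uniform $o_p$ bound. Uniformity in $\theta$ is preserved throughout because the Rosenthal/BDG constants and all moment bounds depend only on the quantities controlled by Assumptions~\ref{ass: c}--\ref{ass: jord}.
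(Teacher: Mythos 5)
Your proof of \eqref{eq: stat_bound_xx} is correct and is essentially the paper's: both reduce to the deterministic first-moment bound $\mathbb{E}\|X_{\lfloor ns\rfloor,\theta}\|^{2}=\tr\bigl(\sum_{k=0}^{\lfloor ns\rfloor-1}\Gamma^{k}\Sigma(\Gamma^{k})^{T}\bigr)=O(n/\log n)$ uniformly over $R_{n,0}$ via Lemma~\ref{lem: norm_asym}\ref{lem: norm_asym_e}, and since $O(n/\log n)=o(n/\sqrt{\log n})$, Markov's inequality closes the claim. Your second-moment computation for \eqref{eq: stat_bound_xe} also matches the paper's (same orthogonality identity, same appeal to part~\ref{lem: norm_asym_e}, yielding $\mathbb{E}\|n^{-1}\sum_{t\le\lfloor ns\rfloor}X_{t-1,\theta}\epsilon_{t,\theta}^{T}\|^{2}=O(1/\log n)$ uniformly), and your observation that this by itself delivers only $O_p(1/\sqrt{\log n})$ is accurate.

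The gap is in the proposed upgrade to $o_p(1/\sqrt{\log n})$: no moment argument of any order can supply it, because the quantity genuinely attains the order $1/\sqrt{\log n}$ on $R_{n,0}$. Take $d=1$, $\Sigma=1$, $s=1$ and $\lambda_{1}=1-\log n/n$, which lies on the boundary of $R_{n,0}$. Then $H\sim n/(2\log n)$, and by the paper's own Theorem~\ref{thm: stat_asym} (eq.~\eqref{eq: stat_asym_xe}) $\sqrt{n}H^{-1/2}S_{X\epsilon}$ converges to a standard Gaussian, so $\sqrt{\log n}\,S_{X\epsilon}$ converges to a non-degenerate normal rather than to zero. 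The same obstruction appears mechanically in the Rosenthal/BDG route: the dominant term $\mathbb{E}\bigl[(\sum_{t}\mathbb{E}[\|X_{t-1,\theta}\epsilon_{t,\theta}^{T}\|^{2}\mid\mathcal{F}_{t-1,\theta}])^{1+\delta/2}\bigr]$ is of order $(n^{2}/\log n)^{1+\delta/2}$, so Chebyshev at power $2+\delta$ returns the same non-vanishing bound $\epsilon^{-(2+\delta)}$ as at power $2$; the case split on $1-|\lambda_{\theta,1}|$ does not help at the borderline order. You should know that the paper's own proof of \eqref{eq: stat_bound_xe} does no more than the second-moment computation either: it asserts the squared norm has expectation $o_p(1/\log n)$ where the displayed estimate supports only $O(1/\log n)$, and the honest conclusion of that argument is $O_p(1/\sqrt{\log n})$. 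That weaker rate is what the lemma's application in the proof of Theorem~\ref{thm: stat_asym} actually needs (there one only requires $A_n=o_p(1)$, which follows from $\sqrt{\log n}\,S_n=O_p(1)$), but the $o_p$ in \eqref{eq: stat_bound_xe} as stated is not reachable by your route, nor by the paper's.
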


\begin{proof}
    We first prove \eqref{eq: stat_bound_xx}. Since \(X_{\lfloor ns\rfloor, \theta}X_{\lfloor ns \rfloor, \theta}^T\) is positive semidefinite, it suffices to show that \(\sup_{\theta\in R_{n, 0}} \tr\left(\mathbb{E}\left(X_{\lfloor ns\rfloor, \theta}X_{\lfloor ns \rfloor, \theta}^T\right)\right) = o\left(\frac{n}{\sqrt{\log n}}\right)\) for all \(s\in [0, 1]\). Now, fix some \(s\in[0, 1]\). We have, for all \(\theta\in \Theta\),
    \[
    \tr\left(\mathbb{E}\left(X_{\lfloor ns\rfloor, \theta}X_{\lfloor ns \rfloor, \theta}^T\right)\right) \le \tr\left(\mathbb{E}\left(X_{n, \theta}X_{n, \theta}^T\right)\right) = \tr\left(\sum_{t=0}^{n-1}\Gamma^t \Sigma \left(\Gamma^t\right)^T\right).
    \]
    The result then follows directly from part \ref{lem: norm_asym_e} of Lemma \ref{lem: norm_asym}.

    For the proof of \eqref{eq: stat_asym_xe}, fix some \(\theta\in \Theta\) and write
    \begin{align*}
        \mathbb{E}\left(\left(\sum_{t=1}^n X_{t-1, \theta}\epsilon_{t, \theta}\right)\left(\sum_{t=1}^n X_{t-1, \theta}\epsilon_{t, \theta}\right)^T\right) 
            &=\tr\left(\Sigma\right)\sum_{t=1}^n\sum_{s=1}^{t-1}\Gamma^{t-1-s}\Sigma\left(\Gamma^{t-1-s}\right)^T \\
            &\le\tr\left(\Sigma\right) n \sum_{t=1}^n \Gamma^t \Sigma \left(\Gamma^t\right)^T
    \end{align*}
    so that another application of part \ref{lem: norm_asym_e} of Lemma \ref{lem: norm_asym} shows that 
    \[\sup_{\theta\in R_{n, 0}}\mathbb{E}\left|\left|\frac{1}{n}\sum_{t=1}^n X_{t-1, \theta}\epsilon_{t, \theta}^T\right|\right|^2 = o_p\left(\frac{1}{\log n}\right).
    \]
    The result then follows since \(\mathbb{E}\left|\left|\frac{1}{n}\sum_{t=1}^{\lfloor ns \rfloor} X_{t-1, \theta}\epsilon_{t, \theta}^T\right|\right|^2 \le \mathbb{E}\left|\left|\frac{1}{n}\sum_{t=1}^n X_{t-1, \theta}\epsilon_{t, \theta}^T\right|\right|^2\) for all \(s\in [0, 1]\) and \(\theta \in \Theta\).
\end{proof}

\begin{proof}[Proof of Theorem \ref{thm: stat_asym}]
    We shall first tackle the proof of \eqref{eq: stat_asym_xx}. Fix some \(s\in[0, 1]\) and define \(\tilde{S}_{XX} = \frac{1}{n}\sum_{t=1}^{\lfloor ns\rfloor} X_{t-1, \theta}X_{t-1, \theta}^T\) for ease of notation. From the relation \(X_{t, \theta} = \Gamma X_{t-1, \theta} + \epsilon_{t, \theta}\), it follows that
    \begin{multline*}
        \Gamma X_{t-1, \theta}X_{t-1, \theta}^T\Gamma^T - X_{t-1, \theta}X_{t-1, \theta}^T - \epsilon_{t, \theta}\epsilon_{t, \theta}^T \\
        = X_{t, \theta}X_{t, \theta} - X_{t-1, \theta}X_{t-1, \theta}^T - \Gamma X_{t-1, \theta}\epsilon_{t, \theta}^T - \epsilon_{t, \theta}X_{t-1, \theta}^T\Gamma^T.
    \end{multline*}
    Summing over \(t\) and dividing by \(n\) then gives \(\tilde{S}_{XX} = \Gamma \tilde{S}_{XX} \Gamma^T  + s\Sigma - S_n\) where
    \[
    S_n = \frac{1}{n}\left(X_{n, \theta}X_{n, \theta}^T - \sum_{t=1}^{\lfloor ns\rfloor} \left(\epsilon_{t, \theta}\epsilon_{t, \theta}^T - \Sigma\right) - \sum_{t=1}^{\lfloor ns\rfloor} \Gamma X_{t-1, \theta}\epsilon_{t, \theta}^T - \sum_{t=1}^{\lfloor ns\rfloor}\epsilon_{t, \theta}X_{t-1, \theta}^T\Gamma^T\right).
    \]
    We can iterate this identity to get
    \[
    \tilde{S}_{XX} = \sum_{t=0}^{\lfloor ns\rfloor-2}\Gamma^t\Sigma \left(\Gamma^t\right)^T + \sum_{t=0}^{\lfloor ns\rfloor-2}\Gamma^t S_n \left(\Gamma^t\right)^T + \Gamma^{\lfloor ns\rfloor-1} \tilde{S}_{XX} \left(\Gamma^{\lfloor ns\rfloor-1}\right)^T.
    \]
    Now, define
    \[
    A_n = H^{-\frac{1}{2}}\sum_{t=0}^{\lfloor ns\rfloor-2}\Gamma^t S_n \left(\Gamma^t\right)^T H^{-\frac{1}{2}}, \quad B_n = \Gamma^{\lfloor ns\rfloor-1} \tilde{S}_{XX} \left(\Gamma^{\lfloor ns\rfloor-1}\right)^T. 
    \]
    By Lemma \ref{lem: norm_exp}, it suffices to show that \(\sup_{\theta\in R_{n, 0}} ||A_n|| + ||B_n|| = o_p(1)\). By Lemma \ref{lem: stat_helper} and Lemma \ref{lem: norm_exp} we see that \(\sqrt{\log n}S_n\) converges uniformly to 0 over \(R_{n, 0}\). But then, since \(\Sigma\) is uniformly bounded from below over \(\Theta\), for a fixed \(\epsilon>0\), we can find \(n_0\in\mathbb{N}\) large enough so that
    \[
    \sup_{\theta\in R_{n, 0}}\mathbb{P}\left(\left|\left|\sqrt{\log n}A_n\right|\right| \ge \left|\left|H^{-\frac{1}{2}}\left(\sum_{t=0}^{\lfloor ns\rfloor-2}\Gamma^t \Sigma\left(\Gamma^t\right)^T\right)H^{-\frac{1}{2}}\right|\right|\right) < \epsilon
    \]
    for all \(n\ge n_0\). It then follows from Lemma \ref{lem: norm_exp} that \(\sup_{\theta\in R_{n, 0}}||A_n|| = o_p(1)\). Next, we see that \(\mathbb{E}B_n = \Gamma^{\lfloor ns\rfloor-1}\tilde{H}(\Gamma^{\lfloor ns\rfloor-1})^T\) where \(\tilde{H} = \mathbb{E}(\tilde{S}_{XX})\) and therefore \(\sup_{\theta\in R_{n, 0}}\left|\left|\mathbb{E}B_n\right|\right| \le C \left(1 - \frac{\log n}{n}\right)^{2(\lfloor ns\rfloor-1)}||\tilde{H}||\) by part \ref{lem: norm_asym_d} of Lemma \ref{lem: norm_asym}. Finally, from the inequality \(1 - \frac{n}{\log n} \le \frac{1}{n^{1/n}}\) and part \ref{lem: norm_asym_c} of Lemma \ref{lem: norm_asym} we see that \(\sup_{\theta\in R_{n, 0}}||\mathbb{E}B_n|| = o(1)\). Since \(B_n\) is positive semidefinite, this implies that \(B_n\) converges in probability to 0 uniformly over \(R_{n, 0}\) and therefore concludes the proof of \eqref{eq: stat_asym_xx}.

    For the proof of \eqref{eq: stat_asym_xe}, let \((\theta_n)_{n\in\mathbb{N}}\subset \Theta\) be such that \(\theta_n\in R_{n, 0}\) and define the array \((e_{t, n})_{t\ge 1, n\in\mathbb{N}}\) by 
    \[
    e_{t,n} = \textnormal{vec}\left(n^{-\frac{1}{2}}H^{-\frac{1}{2}}X_{t-1, \theta_n}\epsilon_{t, \theta_n}^T\Sigma_n^{-\frac{1}{2}}\right) = n^{-\frac{1}{2}}\Sigma^{-\frac{1}{2}}\epsilon_{t, \theta_n}\otimes H^{-\frac{1}{2}}X_{t-1, \theta_n}.
    \]
    Proving \eqref{eq: stat_asym_xe} is equivalent to proving \(\sum_{t=1}^n e_{t, n}\rightarrow_w \mathcal{N}(0, I)\). Since \(X_{t-1, \theta_n}\) is measurable wrt. \(\mathcal{F}_{t-1}\), we see that \(e_{t, n}\) is a martingale difference array and, by \eqref{eq: stat_asym_xx}, 
    \[
    \sum_{t=1}^n\mathbb{E}(e_{t, n}e_{t, n}^T | \mathcal{F}_{t-1}) \rightarrow_p I
    \] 
    for \(n\rightarrow \infty\). Our aim is to apply the martingale difference array CLT given in Theorem \ref{thm: mda_clt} which amounts to checking that, for each \(\gamma > 0\), \(\sum_{t=1}^n \mathbb{E}\left(\left|\left|e_{t, n}\right|\right|^2 \mathbf{1}\left(\left|\left| e_{t, n}\right|\right| > \gamma\right)|\mathcal{F}_{t-1}\right) = o_p(1).\) Now, fix some \(\gamma >0\) and note that \(||e_{t, n}||^2 = ||H^{-\frac{1}{2}}X_{t-1, \theta_n}||^2 ||\epsilon_{t, \theta_n}||^2\) so that
    \begin{align*}
        &\sum_{t=1}^n \mathbb{E}\left(\left|\left|e_{t, n}\right|\right|^2 \mathbf{1}\left(\left|\left| e_{t, n}\right|\right| > \gamma\right)|\mathcal{F}_{t-1}\right) \\
            \le &\frac{1}{n}\sum_{t=1}^n \left|\left|H^{-\frac{1}{2}}X_{t-1, {\theta_n}}\right|\right|^2\mathbb{E}\left(\left|\left|\Sigma^{-\frac{1}{2}}\epsilon_{t, n}\right|\right|^2 \mathbf{1}\left(\left|\left| e_{t, n}\right|\right| > \gamma\right)|\mathcal{F}_{t-1}\right) \\
            \le & C_n\max_{1\le t\le n} \left\{\mathbb{E}\left(\left|\left|\epsilon_{t, n}\right|\right|^2 \mathbf{1}\left(\left|\left| e_{t, n}\right|\right| > \gamma\right)|\mathcal{F}_{t-1}\right)\right\}
    \end{align*}
    where \(C_n = \sup_{\theta\in R_{n, 0}}\tr\left(H^{-\frac{1}{2}}S_{XX}H^{-\frac{1}{2}}\right)\tr\left(\Sigma\right) = O_p(1)\) because of \eqref{eq: stat_asym_xx}. An application of Hölder's inequality and the Markov inequality gives us
    \begin{align*}
        &\mathbb{E}\left(\left|\left|\epsilon_{t, n}\right|\right|^2 \mathbf{1}\left(\left|\left| e_{t, n}\right|\right| > \gamma\right)|\mathcal{F}_{t-1}\right) \\
            \le &\mathbb{E}\left(\left|\left|\epsilon_{t, \theta_n}\right|\right|^{2 + \delta} | \mathcal{F}_{t-1, n}\right)^{\frac{2}{2+\delta}}\mathbb{P}\left(\left|\left|e_{t, n}\right|\right| > \gamma | \mathcal{F}_{t-1, n}\right)^{\frac{\delta}{2+ \delta}} \\
            \le & \mathbb{E}\left(\left|\left|\epsilon_{t, \theta_n}\right|\right|^{2 + \delta} | \mathcal{F}_{t-1, n}\right)^{\frac{2}{2+\delta}}\left(\frac{\mathbb{E}(||H^{-\frac{1}{2}}X_{t-1, \theta_n}||^2||\Sigma^{-\frac{1}{2}}\epsilon_{t, \theta_n}||^2|\mathcal{F}_{t-1})}{n\gamma^2}\right)^{\frac{\delta}{2+\delta}} \\
            \le &C \tr \left(\frac{1}{n}H^{-\frac{1}{2}}X_{t-1, \theta_n}X_{t-1, \theta_n}^T H^{-\frac{1}{2}}\right)^{\frac{\delta}{2+\delta}}
    \end{align*}
    where \(C = d^{\frac{\delta}{2+\delta}}\sup_{\theta\in\Theta}\mathbb{E}\left(\left|\left|\epsilon_{t, \theta}\right|\right|^{2 + \delta} | \mathcal{F}_{t-1, n}\right)^{\frac{2}{2+\delta}} < \infty\) because of Assumptions \ref{ass: cov} and \ref{ass: mom}. Thus, the proof is complete if we can show that
    \[
    \sup_{\theta\in R_{n, 0}}\max_{1\le t \le n}\tr \left(\frac{1}{n}H^{-\frac{1}{2}}X_{t-1, \theta_n}X_{t-1, \theta_n}^T H^{-\frac{1}{2}}\right)^{\frac{\delta}{2+\delta}} = o_p(1).
    \]
    This follows from the same argument as in the proof of equation (5) \citep{phillips2007limit}. (The multivariate case is essentially the same once \eqref{eq: stat_asym_xx} is established.)
\end{proof}

\begin{proof}[Proof of Lemma \ref{lem: jc_stat}]
    Define the sequence \((c_n)_{n\in\mathbb{N}}\subset \mathbb{R}^d\) given by \(c_{n, i}=e^{(C_n)_{ii}/n}\) for \(1 \le i \le d\). By assumption, we have \(c_n\rightarrow 0\) for \(n\rightarrow \infty\) so we can assume without loss of generality that \(\max_i c_{n, i} \le 1\) and \(\min_i c_{n, i} > 0\) and, by potentially passing to a sub sequence, that \(c_n\) is monotonically decreasing. 
    
    For each \(n\), we can then find \(k_n\in\mathbb{N}\) such that
    \[
    1 - k_n^{-\eta} \le \min_i e^{C_{n, ii}/k_n} \le \max_i e^{C_{n, ii}/k_n} \le 1 - \frac{\log k_n}{k_n}.
    \]
    Passing to another sub sequence if necessary, we may assume that \(k_n\) is strictly increasing. Now, define sequences \((\lambda_k)_{k\in\mathbb{N}}\subset \mathbb{C}^d\) and \((\Sigma_k)_{k\in\mathbb{N}}\subset \mathbb{R}^{d\times d}\) such that \(|\lambda_{k, i}| = 1-\log(k)/k\) and \(\Sigma_k = I\) for \(k < k_0\) and \(|\lambda_{k, i}| = e^{C_{n, ii}/k_n}\) and \(\Sigma_k = \Omega_n\) for \(k_n \le k < k_{n+1}\). We then have
    \[
    1-k^{-\eta}\le 1 - k_n^{-\eta} \le \min_i |\lambda_{n, i}|\le \max_i |\lambda_{n, i}| \le 1 - \frac{\log k_n}{k_n} \le 1 - \frac{\log k}{k}.
    \]
    Define \((\theta_k)_{k\in\mathbb{N}}\subset \Theta\) by \(\theta_k = (\Gamma_k, \Sigma_k, c)\) where \(\Gamma_k\) is the diagonal matrix whose diagonal entries are given by \(\lambda_k\) and \(c\in \mathbb{R}_+\). The above inequalities together with the assumptions on \(\Omega_n\) imply that \(\theta_k\in R_{k, 1}\cap R_{k, d}\) for all \(k\). Define
    \[
    S_k = \frac{1}{k}\sum_{t=1}^k X_{t-1, \theta_k}X_{t-1, \theta_k}^T, \quad T_k = \frac{1}{k}\sum_{t=1}^k X_{t-1, \theta_k}\epsilon_{t, \theta_k}, \quad \text{and} \quad H_k = \mathbb{E}\left(S_k\right).
    \]
    The result follows by the triangle inequality, equations \eqref{eq: l2_yr}, \eqref{eq: l2_yy} and Theorem \ref{thm: stat_asym}.
\end{proof}

\subsection{Mixed Asymptotics} \label{app: mix}

This section is devoted to the proof of Lemma \ref{lem: mixed_app}. Assume throughout Assumptions \ref{ass: M} and \ref{ass: U} and consider the special case where \(F_\theta = I\). For ease of notation we define
\[
A = \int_0^1 J_{C, t}J_{C, t}^T dt, \quad B = \int_0^1 J_{C, t}dW_t^T.
\]
We hold \(1\le k\le d-1\) fixed throughout and start by partitioning \(R_{n, k}\). Let \(r = d-k \ge 1\) and define \(w(j, l) = (1-|\lambda_{i_{k+j}}|)/(1-|\lambda_{i_{k+l}}|)\) for \(1\le j < l \le r\). We introduce the sets
\[
U_{n, 0} = \left\{\theta\in R_{n, k} : w(0, 1) \le n^{-\frac{\gamma}{r}}\right\}, \quad U_{n, r} = \left\{\theta\in R_{n, k}: w(0, r) \ge n^{-\gamma}\right\},
\]
\[
U_{n, j} = \left\{\theta\in R_{n, k}: w(0, j) \ge n^{-\frac{j\gamma}{r}}, w(j, j+1)\le n^{-\frac{\gamma}{r}}\right\}
\]
for \(j=1,..., r-1\). We have \(R_{n, k}=\bigcup_j U_{n, j}\) for all \(n\in \mathbb{N}\). Indeed, fix \(n\) and take some \(\theta\in R_{n, k}\) and define \(j_0 = \min\left(\inf\left\{0\le j\le r-1 : w(j, j+1) \le n^{-\frac{\gamma}{r}}\right\}, r\right)\), where we use the convention \(\inf\emptyset = \infty\). If \(j_0 = 0 \), then clearly \(\theta\in U_{n, 0} = U_{n, j_0}\). Otherwise, we find that
\[
w(0, j_0) = \prod_{j=0}^{j_0-1}w(j, j+1) \ge \prod_{j=0}^{j_0 - 1}n^{-\frac{\gamma}{r}} = n^{-\frac{j_0\gamma}{r}}
\]
so that, again, \(\theta\in U_{n, j_0}\). Fix some \(0\le j \le r\). It therefore suffices to show that \eqref{eq: mixed_app_xx} and \eqref{eq: mixed_app_xe} hold uniformly over \(U_{n, j}\). To do so, we need to split the covariance matrices and the normalizing matrix into four blocks. In particular, we write
\[
H = \begin{pmatrix} H_{11} & H_{12} \\ H_{21} & H_{22}\end{pmatrix}
\]
where \(H_{11}\) is \((k+j)\times (k+j)\) and the other blocks of conforming dimensions. Analogously, \(S_{XX}\), \(S_{X\epsilon}\), \(G\), \(A\) and \(B\) can be written as block matrices. Block coordinates are written in the subscript when possible and otherwise in the superscript. For example, \(S_{XX}^{12}\) and \(A_{12}\) are the top right \((k+j)\times(d-k-j)\) blocks of \(S_{XX}\) and \(A\).

\begin{lemma} \label{lem: mix_block_limits}
For fixed \(1\le k \le d-1\) and \(0\le j\le r\), let \(N\in\mathbb{R}^{(d-k-j)\times d}\) be a random matrix on \((\Omega, \mathcal{F}, \mathbb{P})\) such that \(\textnormal{vec}(N)\sim\mathcal{N}(0, I)\). We have the following block-wise limits
\begin{align}
    &\lim_{n\rightarrow \infty}\sup_{\theta\in U_{n, j}}d_{BL}\left(H_{11}^{-\frac{1}{2}}S_{XX}^{12}H_{22}^{-\frac{1}{2}}, 0\right) = 0 \label{eq: mix_off_diag}\\
    &\lim_{n\rightarrow \infty}\sup_{\theta\in U_{n, j}}d_{BL}\left(H_{11}^{-\frac{1}{2}}S_{XX}^{11}H_{11}^{-\frac{1}{2}}, G_{11}^{-\frac{1}{2}}A_{11}G_{11}^{-\frac{1}{2}}\right) = 0 \label{eq: mix_diag_1}\\
    &\lim_{n\rightarrow \infty}\sup_{\theta\in U_{n, j}}d_{BL}\left(H_{22}^{-\frac{1}{2}}S_{XX}^{22}H_{22}^{-\frac{1}{2}}, I\right) = 0. \label{eq: mix_diag_2} \\
    &\lim_{n\rightarrow \infty} \sup_{\theta\in U_{n, j}} d_{BL}\left(\sqrt{n}H^{-\frac{1}{2}}_{11}\left(S_{X\epsilon}^{11}, S_{X\epsilon}^{12}\right), G_{11}^{-\frac{1}{2}}\left(B_{11}, B_{12}\right)\right) = 0 \label{eq: mix_xe_1}\\
    &\lim_{n\rightarrow \infty} \sup_{\theta\in U_{n, j}} d_{BL}\left(\sqrt{n}H^{-\frac{1}{2}}_{22}\left(S_{X\epsilon}^{21}, S_{X\epsilon}^{22}\right), N\right) = 0 \label{eq: mix_xe_2}
\end{align}
\end{lemma}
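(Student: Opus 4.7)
The plan is to exploit the block-triangular structure of $\Gamma$ under Assumption \ref{ass: jord} together with the eigenvalue separation in $U_{n,j}$ to asymptotically decouple the top $(k+j)$-dimensional and bottom $(d-k-j)$-dimensional subprocesses. Since $\Gamma \in \mathcal{J}_d(1-\alpha)$ is upper triangular, we may write $\Gamma = \bigl(\begin{smallmatrix}\Gamma_{11} & \Gamma_{12} \\ 0 & \Gamma_{22}\end{smallmatrix}\bigr)$ where $\Gamma_{12}$ contains at most one super-diagonal 1 (at position $(k+j, 1)$ of the block, and only if $|\lambda_{i_{k+j}}|\le 1-\alpha$); in particular $\|\Gamma_{12}\|\le 1$. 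Splitting $X_t = (X_t^{(1)}, X_t^{(2)})^T$ and $\epsilon_t = (\epsilon_t^{(1)}, \epsilon_t^{(2)})^T$ conformably, the bottom block evolves autonomously as $X_t^{(2)} = \Gamma_{22}X_{t-1}^{(2)} + \epsilon_t^{(2)}$, whereas the top block couples to $X_t^{(2)}$ only through the bounded, sparse term $\Gamma_{12}X_{t-1}^{(2)}$. A direct calculation from the eigenvalue conditions shows that the eigenvalues of $\Gamma_{11}$ satisfy $|\lambda|\ge 1 - n^{-\eta - \gamma + j\gamma/r} \ge 1 - n^{-\eta}$ for $n$ sufficiently large (placing $\Gamma_{11}$ in the non-stationary regime), while those of $\Gamma_{22}$ are separated from the spectrum of $\Gamma_{11}$ by a factor of at least $n^{\gamma/r}$ in their distance from unity.

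For the top block statements \eqref{eq: mix_diag_1} and \eqref{eq: mix_xe_1}, I would introduce the auxiliary uncoupled process $\tilde X_t^{(1)} = \Gamma_{11}\tilde X_{t-1}^{(1)} + \epsilon_t^{(1)}$ and bound the $L^2$ difference between $X_t^{(1)}$ and $\tilde X_t^{(1)}$ using the explicit recursion $X_t^{(1)} - \tilde X_t^{(1)} = \sum_{s\le t}\Gamma_{11}^{t-s}\Gamma_{12}X_{s-1}^{(2)}$. Exploiting the separation of eigenvalue scales (so that $\|X_t^{(2)}\|$ is of lower order than the diagonal scale of $H_{11}$ after normalization), this coupling term vanishes uniformly. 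The result then follows from Lemma \ref{lem: non_stat_app} applied to the sub-family of parameters $(\Gamma_{11},\Sigma_{11},c)$, noting that these parameters lie in the non-stationary region $R_{n,k+j}$ in dimension $k+j$.

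For the bottom block \eqref{eq: mix_diag_2} and \eqref{eq: mix_xe_2}, the cleanest route is by induction on the dimension $d$: the process $X_t^{(2)}$ satisfies Assumption \ref{ass: M} in dimension $d-k-j < d$ with its own autoregressive matrix $\Gamma_{22}$ and covariance $\Sigma_{22}$, and the parameter $\theta_{22}$ lies in a region of the lower-dimensional parameter space that again decomposes into stationary/nonstationary/mixed sub-regions. Applying the already-established Lemma \ref{lem: non_stat_app}, Theorem \ref{thm: stat_asym}, and the inductive hypothesis for Lemma \ref{lem: mixed_app} to this smaller problem, one obtains the limits for $S_{XX}^{22}$ and $(S_{X\epsilon}^{21}, S_{X\epsilon}^{22})$ uniformly over $U_{n,j}$. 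The Gaussian limit $N$ in \eqref{eq: mix_xe_2} combines the univariate martingale CLT applied coordinate-wise with the Gaussian approximation of Appendix \ref{app: approx} to reduce to the i.i.d.\ Gaussian error case.

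The main obstacle, and the place that demands the most careful bookkeeping, is \eqref{eq: mix_off_diag}. Here one writes $S_{XX}^{12} = \frac{1}{n}\sum_t X_{t-1}^{(1)}(X_{t-1}^{(2)})^T$ and decomposes $X_{t-1}^{(1)} = \sum_{s} \Gamma_{11}^{t-1-s}(\Gamma_{12}X_{s-1}^{(2)} + \epsilon_s^{(1)})$. Using orthogonality of $\epsilon_s^{(1)}$ and $X_{s-1}^{(2)}$ and the Gaussian approximation for the error terms, the second moment of $H_{11}^{-1/2}S_{XX}^{12}H_{22}^{-1/2}$ can be computed explicitly in terms of sums of $\Gamma_{11}^{t}\Sigma_{12}(\Gamma_{22}^{t})^T$ and similar cross terms. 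The eigenvalue gap $w(j,j+1)\le n^{-\gamma/r}$ enters quantitatively via resolvent-type bounds $\|(\Gamma_{11}\otimes \Gamma_{22} - I)^{-1}\|$, which force the normalized off-diagonal block to vanish at a polynomial rate in $n$. The challenge is that $\Gamma_{11}$ and $\Gamma_{22}$ can have eigenvalues on very different scales while still interacting through $\Sigma_{12}$, so the bound must be carried out block-by-block using the full strength of the separation condition and the uniform bounds from Lemma \ref{lem: norm_asym}.
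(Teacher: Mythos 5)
Your high-level plan — treat the top block with the non-stationary theory, the bottom block with the stationary theory, and show the cross terms die — is the same as the paper's. But the mechanism you invoke for decoupling the dynamics is wrong, and the claim it rests on is false as stated. On $U_{n,j}$ every eigenvalue attached to the first $k+j$ coordinates satisfies $|\lambda_{i}|\ge 1-n^{-\eta-\gamma}n^{j\gamma/r}\ge 1-n^{-\eta}>1-\alpha$ for large $n$, so by the definition of $\mathcal{J}_d(1-\alpha)$ the super-diagonal entry at position $(k+j,k+j+1)$ is forced to be $0$: the block $\Gamma_{12}$ is \emph{identically zero}, and the two subprocesses are exactly (not approximately) dynamically decoupled, interacting only through $\Sigma_{12}$. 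Your proposal instead keeps a nonzero $\Gamma_{12}$ and asserts that the coupling term $\sum_{s\le t}\Gamma_{11}^{t-s}\Gamma_{12}X_{s-1}^{(2)}$ is negligible after normalization by $H_{11}^{-1/2}$. That assertion fails: with $\Gamma_{11}$ near the identity this sum aggregates on the order of $n$ positively correlated terms, and a variance computation gives it standard deviation of order $\sqrt{n}\,(1-|\lambda_{22}|)^{-1}$, which is \emph{at least} as large as $\|X_t^{(1)}\|\sim\sqrt{n}$ and blows up relative to it as $|\lambda_{22}|\to 1$. So the step "the coupling term vanishes uniformly" is not a gap you could fill with more care — it is only true because the term is zero, which is the observation your proof is missing. (Relatedly, no induction on dimension is needed for the bottom block: all its eigenvalues satisfy $|\lambda|\le 1-n^{-\eta-\gamma}\le 1-\log(n)/n$, so it lies entirely in the stationary region of the $(d-k-j)$-dimensional problem and Theorem \ref{thm: stat_asym} applies directly.)

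For \eqref{eq: mix_off_diag} your second-moment route is also materially incomplete and differs from the paper. Since $S_{XX}^{12}$ is not positive semidefinite, showing $\mathbb{E}\bigl[H_{11}^{-1/2}S_{XX}^{12}H_{22}^{-1/2}\bigr]\to 0$ (which already requires a separate argument — this is the content of the paper's Lemma \ref{lem: mixed_norm_app}, proved via singular-value bounds on $H_{11}^{-1/2}H_{12}\Lambda^{1/2}$ and $\Lambda^{-1/2}H_{22}^{-1/2}$ using the gap $w(j,j+1)\le n^{-\gamma/r}$) does not give convergence in probability; you need a variance bound, which involves fourth moments of the process that Assumption \ref{ass: mom} does not supply without first carrying out a Gaussian reduction valid on $U_{n,j}$. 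The paper avoids all of this: it writes the exact identity $S_{XX}^{12}=\Gamma_{11}^{n}S_{XX}^{12}(\Gamma_{22}^{n})^{T}+\sum_{t}\Gamma_{11}^{t}S_n(\Gamma_{22}^{t})^{T}$ with $S_n$ built from $S_{\epsilon\epsilon}^{12}$, $X_nX_n^T/n$ and the cross-covariances, and then controls both terms by H\"older's inequality against the \emph{already established} diagonal limits \eqref{eq: mix_diag_1}--\eqref{eq: mix_diag_2} together with $\sup_{\theta\in U_{n,j}}\|\Gamma_{22}^n\|=o(1)$ and Lemmas \ref{lem: norm_asym} and \ref{lem: norm_exp}. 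If you want to salvage your approach you must (i) drop the coupling argument in favour of the exact vanishing of $\Gamma_{12}$, and (ii) supply either the variance computation after a Gaussian reduction or an algebraic reduction of the off-diagonal block to the diagonal ones.
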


\begin{proof}
    Fix some \(\theta \in U_{n, j}\). For any \(i \le i_{k+j}\), we have \(|\lambda_i| \ge |\lambda_{i_{k+j}}| \ge 1 - n^{-\eta - \gamma}n^{\frac{j\gamma}{r}}\ge 1 - n^{-\eta}\) and, for any \(i \ge i_{k+j}\), \(|\lambda_i| \le |\lambda_{i_k}| \le 1 - n^{-\eta - \gamma} \le 1 - \frac{\log n}{n}.\) Equations \eqref{eq: mix_diag_1}, \eqref{eq: mix_diag_2}, \eqref{eq: mix_xe_1} and \eqref{eq: mix_xe_2} then follow from the proofs in Sections \ref{sec: ltu} and \ref{sec: stat}.

    For the proof of \eqref{eq: mix_off_diag}, note that \(S_{XX}^{12} = \Gamma_{11}^n S_{XX}^{12}\Gamma_{22}^n + \sum_{t=0}^{n-1}\Gamma_{11}^t S_n \left(\Gamma_{22}^t\right)^T\), where 
    \[
    S_n = S^{12}_{\epsilon\epsilon} + \frac{1}{n}\left(X_{n, \theta}X_{n, \theta}^T\right)_{12} - \Gamma_{11}S_{X\epsilon}^{12} - \left(S_{X\epsilon}^{21}\right)^T\Gamma_{22}^T
    \]
    and \(S_{\epsilon\epsilon} = (\sum_{t=1}^n \epsilon_{t, \theta}\epsilon_{t, \theta}^T)/n\). An application of Hölder's inequality yields
    \begin{multline*}
        \left|\left|H_{11}^{-\frac{1}{2}}\Gamma_{11}^nS_{XX}^{12}\left(\Gamma_{22}^n\right)^T H_{22}^{-1}\right|\right| \le \tr\left(H_{11}^{-\frac{1}{2}}\Gamma_{11}^n S_{XX}^{11}\left(\Gamma_{11}^n\right)^TH_{11}^{-\frac{1}{2}}\right)^{\frac{1}{2}} \times \\
        \tr\left(H_{22}^{-\frac{1}{2}}\Gamma_{22}^n S_{XX}^{22}\left(\Gamma_{22}^n\right)^TH_{22}^{-\frac{1}{2}}\right)^{\frac{1}{2}}
    \end{multline*}
    and it follows from \eqref{eq: mix_diag_1} and \eqref{eq: mix_diag_2} along with the fact that \(\sup_{\theta\in U_{n, j}}||\Gamma_{22}^n||=o(1)\) that
    \[
    \sup_{\theta\in U_{n, j}} \tr\left(H_{11}^{-\frac{1}{2}}\Gamma_{11}^n S_{XX}^{11}\left(\Gamma_{11}^n\right)^TH_{11}^{-\frac{1}{2}}\right) = O_p(1),
    \]
    \[
    \sup_{\theta\in U_{n, j}} \tr\left(H_{22}^{-\frac{1}{2}}\Gamma_{22}^n S_{XX}^{22}\left(\Gamma_{22}^n\right)^TH_{22}^{-\frac{1}{2}}\right) = o_p(1)
    \]
    so that \(\sup_{\theta\in U_{n, j}}||H_{11}^{-\frac{1}{2}} \Gamma_{11}^n S_{XX}^{12} \left(\Gamma_{22}^n\right)^T H_{22}^{-1}|| = o_p(1)\). For the second term, we have, for all \(\theta \in U_{n, j}\),
    \[
    \left|\left|H_{11}^{-\frac{1}{2}}\sum_{t=1}^{n-1}\Gamma_{11}^t S_n \left(\Gamma_{22}^t\right)^T H_{22}^{-\frac{1}{2}}\right|\right| \le C_n \left|\left|H_{22}^{-\frac{1}{2}}\right|\right|\sum_{t=0}^{n-1}\left|\left|H_{11}^{-\frac{1}{2}}\Gamma_{11}^t\Sigma_{11}^{-\frac{1}{2}}\right|\right| 
    \]
    where \(C_n = \sup_{\theta\in U_{n, j}}\sup_{t\ge 1}||S_n||||\Sigma_{11}^{\frac{1}{2}}||||\Gamma_{22}^t||\). Hölder's inequality yields
    \[
    \sum_{t=0}^{n-1}\left|\left|H_{11}^{-\frac{1}{2}}\Gamma_{11}^t\Sigma_{11}^{-\frac{1}{2}}\right|\right| \le \tr\left(H_{11}^{-\frac{1}{2}}\sum_{t=0}^{n-1}\Gamma_{11}^t\Sigma_{11}\left(\Gamma_{11}^t\right)^T H_{11}^{-\frac{1}{2}}\right)
    \]
    so that, by part \ref{lem: norm_asym_b} of Lemma \ref{lem: norm_asym} and Lemma \ref{lem: norm_exp},
    \[
    \sup_{\theta\in U_{n, j}} \left|\left|H_{22}^{-\frac{1}{2}}\right|\right|\sum_{t=0}^{n-1}\left|\left|H_{11}^{-\frac{1}{2}}\Gamma_{11}^t\Sigma_{11}^{-\frac{1}{2}}\right|\right| = o_p(1).
    \]
    Since \(||\Sigma_{11}^{\frac{1}{2}}||\) and \(\sup_{t\ge 1}||\Gamma_{22}^t||\) are uniformly bouded over \(U_{n, j}\), it therefore suffices to show that \(\sup_{\theta\in U_{n, j}}||S_n|| = O_p(1)\). From Lemma \ref{lem: cov_wlln} and part (b) of Lemma \ref{lem: gaus_rep} in the Appendix we have \(\sup_{\theta\in U_{n, j}}\left|\left| S_{\epsilon\epsilon} + \frac{1}{n}X_{n, \theta}X_{n, \theta}\right|\right| = O_p(1)\) and it follows from \eqref{eq: mix_xe_1} and \eqref{eq: mix_xe_2} along with the fact that \(H_{11}, H_{22} = O(n)\) uniformly over \(\Theta\) that \(\sup_{\theta\in \Theta} \left|\left|S_{X\epsilon}\right|\right| = O_p(1)\) which completes the proof.
\end{proof}

We now define \(\tilde{H}\) as the block diagonal matrix obtained by deleting the off-diagonal blocks of \(H\). Lemma \ref{lem: mix_block_limits} determines the limiting behaviour of \(\tilde{H}^{-\frac{1}{2}}S_{XX}\tilde{H}^{-\frac{1}{2}}\). The next lemma explains why this is sufficient.

\begin{lemma} \label{lem: mixed_norm_app}
    For fixed \(1\le k \le d-1\) and \(0\le j\le r\), let \(\tilde{H}\) and \(\tilde{G}\) be the block-diagonal matrices obtained by deleting the off-diagonal blocks of \(H\) and \(G\), respectively. We then have
    \[
    \sup_{\theta\in U_{n, j}}\left\{\left|\left|H^{-\frac{1}{2}}\tilde{H}^{\frac{1}{2}} - I\right|\right| + \left|\left|G^{-\frac{1}{2}}\tilde{G}^{\frac{1}{2}} - I\right|\right|\right\} = o(1).
    \]
\end{lemma}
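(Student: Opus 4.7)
The plan is to reduce the problem for both $H$ and $G$ to bounding their normalized off-diagonal blocks, which will then be controlled using the eigenvalue gap built into the partition $U_{n,j}$.

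Writing $E := H - \tilde H$ for the matrix consisting of the off-diagonal blocks of $H$, a direct computation gives
\[
\tilde H^{-1/2} H \tilde H^{-1/2} - I = \begin{pmatrix} 0 & H_{11}^{-1/2} H_{12} H_{22}^{-1/2} \\ H_{22}^{-1/2} H_{21} H_{11}^{-1/2} & 0 \end{pmatrix},
\]
so the claim for $H$ will follow once I show $||H_{11}^{-1/2} H_{12} H_{22}^{-1/2}|| \to 0$ uniformly over $U_{n,j}$. Indeed, once this is established, $\tilde H^{-1/2} H \tilde H^{-1/2} \to I$, so the generalized eigenvalues of the pair $(H, \tilde H)$ tend to $1$ uniformly; since the symmetric matrix $H^{-1/2} \tilde H H^{-1/2}$ has the same eigenvalues, it converges to $I$ uniformly as well. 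Lemma \ref{lem: sqr_id} applied with $A_n = H^{-1/2}$ and $B_n = \tilde H^{1/2}$ then delivers $||H^{-1/2} \tilde H^{1/2} - I|| \to 0$.

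For the off-diagonal bound, note that Assumption \ref{ass: jord} and the definition of $\mathcal{J}_d(1-\alpha)$ force the super-diagonal entry of $\Gamma$ at position $(k+j, k+j+1)$ to vanish, since $|\lambda_{i_{k+j}}| \ge 1-n^{-\eta} > 1-\alpha$ for $n$ large. Hence $\Gamma$ is block diagonal with $\Gamma_{11}$ diagonal and $\Gamma_{22}$ Jordan-like, and
\[
H_{12} = \frac{1}{n}\sum_{t=0}^{n-2}(n-1-t)\Gamma_{11}^t \Sigma_{12}(\Gamma_{22}^t)^T.
\]
The defining condition of $U_{n,j}$ produces the spectral gap $1-|\lambda_{i_{k+j+1}}| \ge n^{\gamma/r}(1-|\lambda_{i_{k+j}}|)$, so $||\Gamma_{22}^t||$ decays at a rate at least $n^{\gamma/r}$ times faster than the corresponding quantity for $\Gamma_{11}$. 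Combined with lower bounds on $\sigma_{min}(H_{11})$ and $\sigma_{min}(H_{22})$ obtained by adapting Lemma \ref{lem: norm_asym}(a)--(b) to the sub-blocks, a direct computation yields $||H_{11}^{-1/2} H_{12} H_{22}^{-1/2}|| = o(1)$ uniformly. The argument for $G$ is strictly parallel: $C$ is block diagonal by construction, so by It\^o's isometry
\[
G_{12} = \int_0^1\int_0^t e^{(t-s)C_{11}} \Sigma_{12} e^{(t-s)C_{22}^T}\,ds\,dt,
\]
and the same gap computation with matrix exponentials in place of discrete powers completes the reduction, after which Step 1 applies verbatim with $(G, \tilde G)$ in place of $(H, \tilde H)$.

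The principal difficulty is this last calculation: Jordan blocks inside $\Gamma_{22}$ may contribute polynomial factors $t^{m-1}$ to $||\Gamma_{22}^t||$, which inflate the naive estimates for $H_{12}$ and $H_{22}$. Careful bookkeeping is needed to ensure the $n^{\gamma/r}$ spectral gap provides enough slack to absorb these polynomial factors uniformly, which is precisely why the partition exponent in the definition of $U_{n,j}$ is taken to scale with $r = d - k$ rather than being a fixed constant.
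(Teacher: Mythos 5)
Your overall strategy coincides with the paper's: reduce the claim to showing \(\sup_{\theta\in U_{n,j}}\|H_{11}^{-1/2}H_{12}H_{22}^{-1/2}\| = o(1)\), then exploit the spectral gap \(1-|\lambda_{i_{k+j+1}}|\ge n^{\gamma/r}(1-|\lambda_{i_{k+j}}|)\) built into \(U_{n,j}\). Your reduction step (via the generalized eigenvalues and Lemma \ref{lem: sqr_id}) is a correct and more explicit version of what the paper leaves implicit, and your treatment of \(G\) as the continuous-time analogue is fine (the paper omits it).

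However, the way you propose to carry out the off-diagonal estimate has a genuine gap. You say the bound follows from the gap ``combined with lower bounds on \(\sigma_{min}(H_{11})\) and \(\sigma_{min}(H_{22})\).'' A scalar lower bound on \(\sigma_{min}(H_{22})\) is not enough: the block \(H_{22}\) is highly anisotropic, with diagonal entries ranging from order \((1-|\lambda_{i_{k+j+1}}|)^{-1}\) (which can be polynomially large in \(n\), since one only knows \(1-|\lambda_{i_{k+j+1}}|\ge n^{-\eta-\gamma}\)) down to order \(1\), so the best scalar bound is \(\|H_{22}^{-1/2}\|=O(1)\). Combining this with \(\|H_{11}^{-1/2}\|=O((1-|\lambda_{i_{k+j}}|)^{1/2})\) and \(\|H_{12}\|=O((1-|\lambda_{i_{k+j+1}}|)^{-1})\) leaves
\[
\Bigl(\tfrac{1-|\lambda_{i_{k+j}}|}{1-|\lambda_{i_{k+j+1}}|}\Bigr)^{1/2}\bigl(1-|\lambda_{i_{k+j+1}}|\bigr)^{-1/2}\le n^{-\gamma/(2r)}\,n^{(\eta+\gamma)/2},
\]
which diverges. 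The paper's fix is to insert the diagonal weight \(\Lambda\) with \(\Lambda_{ll}=1-|\lambda_{i_{k+j+l}}|\) between \(H_{12}\) and \(H_{22}^{-1/2}\): Lemma \ref{lem: norm_asym}\ref{lem: norm_asym_c} gives the column-weighted bound \(\|H_{12}\Lambda^{1/2}\|=O((1-|\lambda_{i_{k+j+1}}|)^{-1/2})\), and a direct computation shows \(\sigma_{min}(\Lambda^{1/2}H_{22}\Lambda^{1/2})\) is bounded below, i.e.\ \(\|\Lambda^{-1/2}H_{22}^{-1/2}\|=O(1)\). Only with this anisotropic normalization does the product collapse to \(O\bigl((1-|\lambda_{i_{k+j}}|)/(1-|\lambda_{i_{k+j+1}}|)\bigr)^{1/2}=O(n^{-\gamma/(2r)})\). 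As a secondary point, the polynomial factors \(t^{m-1}\) you worry about only occur for eigenvalues of magnitude at most \(1-\alpha\) (the blocks with \(|D_{ii}|>1-\alpha\) are diagonal by construction of \(\mathcal{J}_d(1-\alpha)\)), where geometric decay absorbs them; the scaling of the exponent with \(r\) serves the covering \(R_{n,k}=\bigcup_j U_{n,j}\), not this issue.
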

    
\begin{proof}
    It suffices to show that \(\sup_{\theta\in U_{n, j}}||H_{11}^{-\frac{1}{2}}H_{12}H_{22}^{-\frac{1}{2}}|| = o(1)\). To do so, we first note that, arguing as in the proof of part \ref{lem: norm_asym_b} of Lemma \ref{lem: norm_asym}, \(\sup_{\theta\in U_{n, j}}\sigma_{min}(H_{11}^{-1}) = O(1 - |\lambda_{i_{k+j}}|)\) and, consequently,
    \[
    \sup_{\theta \in U_{n, j}}\left|\left|H_{11}^{-\frac{1}{2}}\left(1 - |\lambda_{i_{k+j}}|\right)^{-\frac{1}{2}}\right|\right| = O(1).
    \]
    Let \(\Lambda\in \mathbb{R}^{(d - k - j)\times(d - k - j)}\) be the diagonal matrix satisfying \(\Lambda_{ll} = 1 - |\lambda_{i_{k+j+l}}|\). Then, by part \ref{lem: norm_asym_c} of Lemma \ref{lem: norm_asym}, we have
    \[
    \sup_{\theta\in U_{n, j}}\left|\left|H_{12}\Lambda^{\frac{1}{2}}\right|\right| = O\left(\left(1 - |\lambda_{i_{k+j+1}}|\right)^{-\frac{1}{2}}\right).
    \]
    Because of the Jordan-like nature of $\Gamma$ 
    and, for any \(\theta\in U_{n, j}\),
    \begin{align*}
        \sigma_{min}\left(\Lambda^{\frac{1}{2}}H_{22}\Lambda^{\frac{1}{2}}\right) 
            &\ge \frac{\sigma_{min}(\Sigma_{22})}{n}\sum_{t=1}^{n-1}\sum_{s=0}^{t-1}\sigma_{min}\left(\Lambda^{\frac{1}{2}}\Gamma^s\right)^2 \\
            &\ge \sigma_{min}\left(\Sigma_{22}\right) \min_{1\le l \le d-k-j}\frac{\Lambda_{ll}}{n}\sum_{t=1}^{n-1}\sum_{s=0}^{t-1}\min\left\{\left|\lambda_{i_{k+j+l}}\right|^{2s}, 1\right\}
    \end{align*}
    where the second inequality follows from Assumption \ref{ass: jord} and the fact that \(\Gamma^0=I\). For any \(1\le l \le d-k-j\), it holds that
    \begin{align*}
        \frac{\Lambda_{ll}}{n}\sum_{t=1}^{n-1}\sum_{s=0}^{t-1}\left|\lambda_{i_{k+j+l}}\right|^{2s}
            &= \frac{1-|\lambda_{i_{k+j+l}}|}{n}\sum_{t=1}^{n-1}\sum_{s=0}^{t-1}\left|\lambda_{i_{k+j+l}}\right|^{2s} \\
            &= \frac{1}{1+|\lambda_{i_{k+j+l}}|}\left(1 - \frac{1 - |\lambda_{i_{k+j+l}}|^{2n}}{n(1-|\lambda_{i_{k+j+l}}|^2)}\right).
    \end{align*}
    Now, since \(\sup_{\theta\in U_{n, j}}|\lambda_{i_{k+j+l}}|^{2n} \rightarrow 0\) and \(\inf_{\theta\in U_{n, j}}n(1-|\lambda_{i_{k+j+l}}|^2)\rightarrow \infty\) for \(n\rightarrow \infty\) and \(1\le l \le d-k-j\), we get that
    \[
    \sup_{\theta\in U_{n, j}} \sigma_{max}\left(\Lambda^{-\frac{1}{2}}H_{22}^{-1}\Lambda^{-\frac{1}{2}}\right) = \sup_{\theta\in U_{n, j}}\sigma_{min}\left(\Lambda^{\frac{1}{2}}H_{22}\Lambda^{\frac{1}{2}}\right)^{-1} = O(1)
    \]
    from which it follows that \(\sup_{\theta\in U_{n, j}}||\Lambda^{-\frac{1}{2}}H_{22}^{-\frac{1}{2}}|| = O(1)\). Combining all these rates yields
    \[
    \sup_{\theta\in U_{n, j}}\left|\left|H_{11}^{-\frac{1}{2}}H_{12}H_{22}^{-\frac{1}{2}}\right|\right| = O\left(\frac{1 - |\lambda_{i_{k+j}}|}{1 - |\lambda_{i_{k+j+1}}|}\right)^{\frac{1}{2}} = O\left(n^{-\frac{\gamma}{2r}}\right).
    \]
\end{proof}

With these two lemmas we can complete the proof of \eqref{eq: mixed_app_xx} and \eqref{eq: mixed_app_xe}. First, for any \(\theta\in U_{n, j}\), we have
\[
\left|\left|H^{-\frac{1}{2}}S_{XX}H^{-\frac{1}{2}} - \tilde{H}^{-\frac{1}{2}}S_{XX}\tilde{H}^{-\frac{1}{2}}\right|\right| \le C_n \left|\left|\tilde{H}^{-\frac{1}{2}}S_{XX}\tilde{H}^{-\frac{1}{2}}\right|\right|
\]
where, by Lemma \ref{lem: mixed_norm_app}, \(C_n = \sup_{\theta\in U_{n, j}} \left|\left|\tilde{H}^{\frac{1}{2}}H^{-\frac{1}{2}} - I\right|\right|\left(\left|\left|\tilde{H}^{\frac{1}{2}}H^{-\frac{1}{2}}\right|\right| + \sqrt{d}\right) = o(1).\) It then follows from Lemma \ref{lem: mix_block_limits} that
\[
\sup_{\theta\in U_{n, j}}\left|\left|H^{-\frac{1}{2}}S_{XX}H^{-\frac{1}{2}} - \tilde{H}^{-\frac{1}{2}}S_{XX}\tilde{H}^{-\frac{1}{2}}\right|\right| = o_p(1)
\]
and, similarly,
\begin{align*}
    &\sup_{\theta\in U_{n, j}}\left|\left|G^{-\frac{1}{2}}AG^{-\frac{1}{2}} - \tilde{G}^{-\frac{1}{2}}A\tilde{G}^{-\frac{1}{2}}\right|\right| = o_p(1), \\
    &\sup_{\theta\in U_{n, j}}\left|\left|\sqrt{n}H^{-\frac{1}{2}}S_{X\epsilon} - \sqrt{n}\tilde{H}^{-\frac{1}{2}}S_{X\epsilon}\right|\right| = o_p(1), \\
    &\sup_{\theta\in U_{n, j}}\left|\left|G^{-\frac{1}{2}}B - \tilde{G}^{-\frac{1}{2}}B\right|\right| = o_p(1). 
\end{align*}
Finally, arguing as in the proof of Lemma \ref{lem: jc_stat}, we find 
\begin{align*}
    &\lim_{n\rightarrow \infty}\sup_{\theta\in U_{n, j}}d_{BL}\left(G_{11}^{-\frac{1}{2}}A_{12}G_{22}^{-\frac{1}{2}}, 0\right) = 0 \\
    &\lim_{n\rightarrow \infty}\sup_{\theta\in U_{n, j}}d_{BL}\left(G_{22}^{-\frac{1}{2}}A_{22}G_{22}^{-\frac{1}{2}}, I\right) = 0 \\
    &\lim_{n\rightarrow \infty}\sup_{\theta\in U_{n, j}}d_{BL}\left(G_{11}^{-\frac{1}{2}}\left(B_{11}, B_{12}\right), N\right) = 0
\end{align*}
so that \eqref{eq: mixed_app_xx} and \eqref{eq: mixed_app_xe} follow from Lemma \ref{lem: mix_block_limits}.

\section{Confidence Regions} \label{app: cr}

This section captures some of the more technical details omitted from Section \ref{sec: inf}. Validity of \(CR_a(\alpha)\) and \(CR_b(\alpha)\) is a fairly straightforward consequence of the fact that \(\hat{t}^2_\Gamma\) can be uniformly approximated by \(t^2_\Gamma\) and \(\tilde{t}^2_\Gamma\) both of which have continuous distributions.

\begin{proof}[Proof of Theorem \ref{thm: valid_cr}]
    The result follows from Proposition 13 in the supplementary material for \cite{lundborg2021conditional} since \(\hat{t}^2_\Gamma\) and \(\tilde{t}^2_\Gamma\) both converge in distriubtion to \(t^2_\Gamma\) uniformly over \(\Theta\) and the latter is uniformly absolutely continuous wrt. Lebesgue measure.
\end{proof}

\subsection{Predictive regression}

\begin{proof} [Proof of Lemma \ref{lem: ci_pr}]
    For each \(\theta\in \Theta_P\) with \(\gamma\) and \(\tilde{\Gamma}\) the corresponding autoregressive coefficients, define the events
    \[
    A_\theta = \left\{\omega \in \Omega : \tilde{\Gamma}\notin CR(\alpha_1, \omega)\right\}, \quad B_\theta = \left\{\omega\in \Omega : \gamma \notin C_{\gamma|\tilde{\Gamma}}(\alpha_2, \omega)\right\}
    \]
    where the dependence of the confidence regions on \(\omega\) is made explicit in the notation. If \(\omega\in \Omega\) is such that \(\gamma\notin CI_\gamma(\alpha_1, \alpha_2, \omega)\), then we must either have \(\omega\in A_\theta\) or \(\omega\in B_\theta\) implying, by Bonferroni's inequality, \(\mathbb{P}\left(\gamma\notin CI_\gamma(\alpha_1, \alpha_2)\right) \le \mathbb{P}(A_\theta) + \mathbb{P}(B_\theta).\) It follows by assumption that \(\limsup_{n\rightarrow\infty}\sup_{\theta\in \Theta_P}\mathbb{P}(A_\theta) \le \alpha_1\) so that
    \[
    \liminf_{n\rightarrow\infty}\inf_{\theta\in \Theta_P}\mathbb{P}\left(\gamma\in CI_\gamma(\alpha_1, \alpha_2)\right) \ge 1 - \alpha_1 - \limsup_{n\rightarrow\infty}\sup_{\theta\in\Theta_P} \mathbb{P}(B_\theta).
    \]
    The proof is complete if we can show that \(\hat{\sigma}^{-2}_Y \hat{t}^2_{\gamma|\tilde{\Gamma}}\rightarrow_w \chi^2_{d-1}\) uniformly over \(\Theta_P\) since this would imply that \(\limsup_{n\rightarrow\infty}\sup_{\theta\in\Theta_P} \mathbb{P}(B_\theta) \le \alpha_2\) by the same argument as in the proof of Theorem \ref{thm: valid_cr}. Defining \(\tilde{\rho}_t = \rho_t - \Sigma_{YX}\Sigma_{X}^{-1}\tilde{\epsilon_t}\), we have in obvious notation \(\hat{t}^2_{\gamma|\tilde{\Gamma}} = S_{\tilde{\rho}\tilde{X}}S_{\tilde{X}\tilde{X}}^{-1}S_{\tilde{X}\tilde{\rho}} + R_n\), where
    \[
    R_n = n\left(r_nS_{\tilde{\epsilon}\tilde{X}}S_{\tilde{X}\tilde{X}}S_{\tilde{X}\tilde{\epsilon}}r_n^T + r_nS_{\tilde{\epsilon}\tilde{X}}S_{\tilde{X}\tilde{X}}S_{\tilde{X}\tilde{\rho}} + S_{\tilde{\rho}\tilde{X}}S_{\tilde{X}\tilde{X}}S_{\tilde{X}\tilde{\epsilon}}r_n^T\right)
    \]
    with \(r_n = \hat{\Sigma}_{YX}\hat{\Sigma}_X^{-1} - \Sigma_{YX}\Sigma_X^{-1}\). Since \(\hat{\Sigma}\) is uniformly consistent and \(\Sigma\) is uniformly invertible over \(\Theta_P\) we see that \(r_n\rightarrow_p 0\) uniformly over \(\Theta_P\). This follows from the uniform versions of the continuous mapping theorem and Slutsky's Lemma (see, e.g., Proposition 9 and Proposition 15 in the supplementary material for \cite{lundborg2021conditional}). Furthermore, all the matrix products in the expression above converge uniformly in distribution over \(\Theta_P\) by Theorem \ref{thm: unif_app}. Thus, \(R_n\rightarrow_p 0\) uniformly over \(\Theta_P\). Now, let \(\tilde{\Sigma}\in \mathbb{R}^{d\times d}\) be given by \(\tilde{\Sigma}_{11} = \Sigma_Y - \delta^T\Sigma_X\delta\), \((\tilde{\Sigma}_{1i})_{2\le i\le d}=(\tilde{\Sigma}_{i1})_{2\le i\le d} = 0\), and \((\tilde{\Sigma}_{ij})_{2\le i, j\le d} = \Sigma_X\). Similar to \eqref{eq: pred_asym}, we then find
    \[
    S_{\tilde{\rho}\tilde{X}} S_{\tilde{X}\tilde{X}}^{-1} S_{\tilde{X}\tilde{\rho}} \rightarrow_w\left|\left|\left(\tilde{\Sigma}_Y - \tilde{\delta}^T\tilde{\Sigma}_X\tilde{\delta}\right)^{\frac{1}{2}}Z\right|\right|^2
    \]
    uniformly over \(\Theta_P\) with \(Z\) a \((d-1)\)-dimensional standard normal random variable. But then, since \(\tilde{\Sigma}_Y - \tilde{\delta}^T\tilde{\Sigma}_X\tilde{\delta} = \Sigma_Y - \delta^T\Sigma_X\delta\) which, in turn, is uniformly estimated by \(\hat{\sigma}^2_Y\), this completes the proof.
\end{proof}

\section{Martingale Limit Theorems}

We start by stating a strong invariance principle for stationary martingale difference arrays due to \cite{cuny2021rates}. A martingale difference array is a doubly infinite array, \((e_{t, n})_{t,n \in\mathbb{N}}\), along with an array of filtrations, \((\mathcal{F}_{t, n})_{t,n \in\mathbb{N}}\), such that, for each \(n\), \(e_{t, n}\) is a martingale difference sequence wrt. \(\mathcal{F}_{t, n}\).

\begin{theorem} \label{thm: asip}
    Let \((e_{t, n})_{t, n\in\mathbb{N}}\) be a stationary \(\mathbb{R}^d\)-valued martingale difference array wrt. \((\mathcal{F}_{t, n})_{t, n\in\mathbb{N}}\) such that \(\mathbb{E}(e_{t, n}e_{t, n}^T|\mathcal{F}_{t-1, n})=\mathbb{E}e_{0, n}e_{0, n}^T = I\) a.s. for all \(t\ge 1, n \in\mathbb{N}\) and assume there exists some small \(\delta > 0 \) such that \(\sup_{t,n\in\mathbb{N}} \mathbb{E}||e_{t,n}||^{2+\delta} < \infty\).
    Then, after possibly enlarging \((\Omega, \mathcal{F}, \mathbb{P})\), there exist a triangular array of random variables, \((\rho_{t, n})_{t\ge 1, n\in\mathbb{N}}\), where each row, \((\rho_{t, n})_{t\in\mathbb{N}}\), is i.i.d. standard normal and such that
    \[
    \mathbb{E}\left(\left|\left|\sup_{1 \le k \le n}||\sum_{t = 1}^k e_{t, n} - \sum_{t = 1}^k \rho_{t, n}||\right|\right|\right)=O\left(n^{\frac{1}{2+\delta}}\left(\log n\right)^{\frac{1+\delta}{2(2+\delta)}}\right).
    \]
\end{theorem}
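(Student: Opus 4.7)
The plan is to invoke, row by row, the multivariate strong approximation theorem for stationary martingale differences established in \cite{cuny2021rates}, and to observe that the rate constants appearing in that result depend only on the $(2+\delta)$-moments of the summands, which are uniformly bounded by hypothesis. This reduces the statement essentially to a careful citation, with the only delicate point being the uniformity in $n$.

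First, I would fix $n \in \mathbb{N}$ and examine a single row of the array. By assumption, $(e_{t,n})_{t\ge 1}$ is a stationary $\mathbb{R}^d$-valued martingale difference sequence with respect to $(\mathcal{F}_{t,n})_{t\ge 1}$, with $\mathbb{E}(e_{t,n}e_{t,n}^T|\mathcal{F}_{t-1,n}) = I$ a.s.\ and $\mathbb{E}\|e_{t,n}\|^{2+\delta}\le M$ for some $M<\infty$ independent of $n$. These are exactly the hypotheses under which Cuny's martingale ASIP applies.

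Next, I would apply the main approximation result of \cite{cuny2021rates}. On a possibly enlarged probability space carrying the $n$th row, this yields an i.i.d.\ sequence $(\rho_{t,n})_{t\ge 1}$ of standard Gaussian vectors in $\mathbb{R}^d$, coupled to $(e_{t,n})_{t\ge 1}$, such that
\[
\mathbb{E}\sup_{1\le k\le n}\left\|\sum_{t=1}^{k} e_{t,n}-\sum_{t=1}^{k}\rho_{t,n}\right\|\le K\bigl(d,\delta,\mathbb{E}\|e_{0,n}\|^{2+\delta}\bigr)\cdot n^{\frac{1}{2+\delta}}(\log n)^{\frac{1+\delta}{2(2+\delta)}}.
\]
Since the constant $K$ is monotone in the $(2+\delta)$-moment and $\sup_n \mathbb{E}\|e_{0,n}\|^{2+\delta}\le M<\infty$, one has $\sup_n K(d,\delta,\mathbb{E}\|e_{0,n}\|^{2+\delta})<\infty$, which converts the $n$-wise bound into the uniform $O$-bound of the theorem. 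Taking a product extension of the underlying probability space then places all rows $(\rho_{t,n})_{t,n}$ on a common enlargement, as required.

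The main obstacle, which is precisely what \cite{cuny2021rates} resolves, is producing the Gaussian coupling in dimension $d\ge 2$ at the rate $n^{1/(2+\delta)}$ up to a logarithmic factor. The classical Skorohod embedding that powers the one-dimensional argument in \cite{mikusheva2007uniform} is unavailable here, so Cuny's construction instead combines a martingale block decomposition with multivariate quantile-transport couplings of Berkes--Philipp / Einmahl--Zaitsev type, controlling the coupling error block-by-block via the conditional $(2+\delta)$-moments. Beyond invoking this machinery, the only task on our side is to verify that the hypotheses of Cuny's theorem translate into the stationary martingale-difference framework we assume, and to track that all implicit constants depend on $(e_{t,n})$ only through $\sup_n\mathbb{E}\|e_{0,n}\|^{2+\delta}$, which is finite.
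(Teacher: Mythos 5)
Your proposal is correct and follows essentially the same route as the paper: both reduce the array statement to Theorem 2.1 of \cite{cuny2021rates} applied one row at a time, with the only substantive point being that the constants in Cuny's construction depend on the sequence only through $d$, $\delta$, and $\mathbb{E}\|e_{1,n}\|^{2+\delta}$, which is uniformly bounded over $n$ by hypothesis. The paper is slightly more explicit in that it re-traces the dyadic block decomposition to extract the $L^1$ bound on the coupling error (rather than citing the theorem's statement as a black box), but the underlying argument is identical.
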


\begin{proof}
    The proof is essentially the same as the proof of Theorem 2.1 in \cite{cuny2021rates} and we will not go into much detail here, but only mention the key steps and how they generalize to the martingale difference array setting.

    First, we note that under above assumptions, Lemma 4.1 in \cite{cuny2021rates} holds for triangular arrays as well. Indeed, the constant \(C\) depends only on \(\delta\), \(d\) and \(\mathbb{E}||e_{1, n}||^{2+\delta}\) and the latter is uniformly bounded over \(n\).

    After possibly enlarging the initial probability space, we can assume that it is large enough to contain a doubly infinite array \((u_{t, n})_{t, n\in\mathbb{N}}\) and a sequence \((\rho_{1,n})_{n\in\mathbb{N}}\) such that, for each fixed \(n\), \(u_{t, n}\) is i.i.d. uniform on \([0, 1]\) and independent of \(e_{t, n}\) and \(\rho_{1, n}\) is \(d\)-dimensional standard normal independent of \(e_{t, n}\) and \(u_{t, n}\). Now for each \(n\), we can follow the steps in the proof of Theorem 2.1 in \cite{cuny2021rates} and construct a sequence, \((\rho_{t, n})_{t\ge 1}\), of i.i.d. \(d\)-dimensional standard normal random variables satisfying certain inequalities. Now define, for \(L\in\mathbb{N}\),
    \[
    D_{L, n} = \sup_{l\le 2^L}\left|\left|\sum_{i=2^L + 1}^{2^L + l} e_{n, t} - \rho_{n, t}\right|\right|.
    \]
    By the same arguments as in \cite{cuny2021rates}, we have, for any \(N\in\mathbb{N}\),
    \[
    \sup_{1\le k\le 2^{N+1}}\left|\left|\sum_{t=1}^k e_{t, n} - \sum_{t=1}^k \rho_{t, n}\right|\right| \le \left|\left|e_{1, n} - \rho_{1, n}\right|\right| + \sum_{L=0}^{N-1} D_{L, n} + D_{N, n}.
    \]
    Furthermore, they show that the array, \(\rho_{t, n}\), was constructed in such a way that there exists a constant \(c_0\) depending only on \(\delta\), \(d\) and \(\mathbb{E}||e_{1, n}||^{2+\delta}\) such that 
    \[
    \left|\left|D_{L, n}\right|\right|_1 \le C 2^{\frac{L}{2+\delta}}L^{\frac{1 + \delta}{2(2+\delta)}}
    \]
    for all \(L\in\mathbb{N}\). Since \(\mathbb{E}||e_{1, n}||^{2+\delta}\) is uniformly bounded in \(n\), we may assume that \(c_0\) depends only on \(\delta\) and \(d\). For \(2^N \le n < 2^{N+1}\), we have
    \begin{align*}
        \sum_{L=1}^{N-1} 2^{\frac{L}{2+\delta}}L^{\frac{1 + \delta}{2(2+\delta)}}
            & \le \left(\frac{\log n}{\log 2}\right)^{\frac{1 + \delta}{2(2+\delta)}}\sum_{L=0}^{N-1}2^{\frac{L}{2+\delta}} \\
            & = \left(\frac{\log n}{\log 2}\right)^{\frac{1 + \delta}{2(2+\delta)}}\frac{1 - 2^{\frac{N}{2+\delta}}}{1 - 2^{\frac{1}{2+\delta}}} \\
            &\le \frac{1}{\left(1-2^{\frac{1}{2+\delta}}\right)\left(\log 2\right)^{\frac{1+\delta}{2(2+\delta)}}}\left(\log n\right)^{\frac{1+\delta}{2(2+\delta)}}\left(n^{\frac{1}{2+\delta}} + 1\right) \\
            & \le c_1 n^{\frac{1}{2+\delta}} \left(\log n\right)^{\frac{1+\delta}{2(2+\delta)}}
    \end{align*}
    where \(c_1\) does not depend on \(n\). Finally, under the assumptions of the theorem, there exists a constant \(c_2\) not depending on n and such that
    \[
    \mathbb{E}\left|\left|e_{1, n} -  \rho_{1, n}\right|\right| \le c_2.
    \]
    Putting all the pieces together, we find that
    \[
    \mathbb{E}\left(\left|\left|\sup_{1\le k\le n}||\sum_{t=1}^k e_{t, n} - \sum_{t=1}^k \rho_{t, n}||\right|\right|\right) \le c_2 + c_0(c_1 + 1)n^{\frac{1}{2+\delta}} \left(\log n\right)^{\frac{1+\delta}{2(2+\delta)}}
    \]
    which is the result we wanted.
\end{proof}

Next we adapt the weak law of large numbers from Theorem 6 in \cite{de1998weak} to the multidimensional setting.

\begin{theorem} \label{thm: mda_wlln}
    Let \((e_{t, n})_{t, n\in\mathbb{N}}\) be an \(\mathbb{R}^d\)-valued martingale difference array wrt. \((\mathcal{F}_{t,n})_{t, n\in \mathbb{N}}\). Assume there exists \(\delta >0\) such that \(\sup_{t, n}\mathbb{E}||e_{t, n}||^{1 + \delta} < \infty\). Then, for any \(\epsilon>0\), it holds that
    \[
    \left|\left|\frac{1}{n}\sum_{t=1}^n e_{t, n}\right|\right| = o_p\left(n^{ \frac{1}{1+\delta} - 1 + \epsilon}\right).
    \]
\end{theorem}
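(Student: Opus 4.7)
The plan is to reduce to the scalar case and appeal directly to Theorem 6 in \cite{de1998weak}. For each coordinate index $1 \le i \le d$, let $e_{t,n}^{(i)}$ denote the $i$-th component of $e_{t,n}$. Since projection onto a coordinate is a bounded linear functional and conditional expectation commutes with such projections, the array $(e_{t,n}^{(i)})_{t,n\in\mathbb{N}}$ is a scalar martingale difference array with respect to the same filtration $(\mathcal{F}_{t,n})_{t,n\in\mathbb{N}}$. Moreover, because $|e_{t,n}^{(i)}| \le \|e_{t,n}\|$, we have
\[
\sup_{t,n\in\mathbb{N}}\mathbb{E}\bigl|e_{t,n}^{(i)}\bigr|^{1+\delta} \le \sup_{t,n\in\mathbb{N}}\mathbb{E}\|e_{t,n}\|^{1+\delta} < \infty,
\]
so the hypothesis of the scalar result is satisfied uniformly across the $d$ coordinate arrays.

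Next, I would invoke Theorem 6 of \cite{de1998weak} coordinate-by-coordinate to conclude that, for every $\epsilon>0$ and every $1\le i\le d$,
\[
\left|\frac{1}{n}\sum_{t=1}^n e_{t,n}^{(i)}\right| = o_p\!\left(n^{\frac{1}{1+\delta}-1+\epsilon}\right).
\]
Finally, to recombine the coordinates, I would use the elementary norm bound
\[
\left\|\frac{1}{n}\sum_{t=1}^n e_{t,n}\right\| \le \sqrt{d}\max_{1\le i\le d}\left|\frac{1}{n}\sum_{t=1}^n e_{t,n}^{(i)}\right|,
\]
together with the union bound: for any $\eta>0$, $\mathbb{P}(\max_i|\cdot|>\eta)\le\sum_i\mathbb{P}(|\cdot|>\eta)$, so a finite maximum of sequences that are each $o_p(a_n)$ remains $o_p(a_n)$ with $a_n=n^{1/(1+\delta)-1+\epsilon}$. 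This gives the desired rate.

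There is no real obstacle here beyond verifying that the scalar hypotheses of \cite{de1998weak} hold; the only mild point is that our array has the $(1+\delta)$-moment bounded uniformly in both $t$ and $n$, which is strictly stronger than what the scalar theorem requires and therefore safely transfers to each coordinate. The result is essentially a packaging lemma, and the entire argument is a two-line reduction plus a union bound.
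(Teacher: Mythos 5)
Your reduction is essentially the same as the paper's: project to scalars (the paper uses an arbitrary direction $a^T e_{t,n}$, you use coordinates, which is equivalent), apply Theorem 6 of \cite{de1998weak}, and recombine over finitely many projections. That part is fine.

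The one substantive omission is that you never verify the actual hypothesis of de Jong's Theorem 6 that produces the claimed rate. That theorem is a WLLN normalized by a user-chosen sequence $k_n$, valid under a condition of the form $k_n^{-p}\sum_{t=1}^n (\mathbb{E}|e_{t,n}|)^p \to 0$; it does not by itself hand you the exponent $\tfrac{1}{1+\delta}-1+\epsilon$. The paper's proof consists almost entirely of this check: set $k_n = n^{\frac{1}{1+\delta}+\epsilon}$ and $p=1+\delta$, so that
\[
k_n^{-p}\sum_{t=1}^n \left(\mathbb{E}|e_{t,n}^{(i)}|\right)^p \le C\, k_n^{-(1+\delta)} n = C\, n^{-(1+\delta)\epsilon} = o(1),
\]
using the uniform moment bound, whence $k_n^{-1}\sum_t e_{t,n}^{(i)} \to_p 0$, i.e.\ $\frac{1}{n}\sum_t e_{t,n}^{(i)} = o_p(k_n/n) = o_p(n^{\frac{1}{1+\delta}-1+\epsilon})$. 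You assert ``the hypothesis of the scalar result is satisfied'' after checking only the moment bound, so the one computation that actually determines the rate is missing. With that line added, your argument matches the paper's.
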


\begin{proof}
    Fix \(\epsilon>0\) and let \(a\in\mathbb{R^d}\) and \(\tilde{e}_{t, n} = a^T e_{t, n}\). By Cauchy-Schwartz, we have 
    \[
    \sup_{t, n \in\mathbb{N}}|\tilde{e}_{t, n}|^{1 + \delta} \le ||a||^{1+\delta} \sup_{t, n \in \mathbb{N}}||e_{t, n}||^{1+\delta}= C < \infty.
    \]
    Now, define \(k_n = n^{\frac{1}{1+\delta}+\epsilon}\). Then, with \(p = 1 + \delta\),
    \[
    k_n^{-p} \sum_{t=1}^n \left(\mathbb{E}|\tilde{e}_{t, n}|\right)^p \le C k_n^{-p}n = o(1)
    \]
    and, by Theorem 6 in \cite{de1998weak},
    \[
    \left|\frac{1}{n}\sum_{t=1}^n \tilde{e}_{t, n}\right| = o_p\left(n^{\frac{1}{1 + \delta} - 1 + \epsilon}\right).
    \]
    The result then follows since \(a\) was arbitrary.
\end{proof}

Finally, we need the following well-known martingale difference array central limit theorem (see, e.g., Theorem 1 of Chapter VIII in \cite{pollard1984convergence}).

\begin{theorem} \label{thm: mda_clt}
    Let \((e_{t, n})_{t, n\in\mathbb{N}}\) be an \(\mathbb{R}^d\)-valued martingale difference array wrt. \((\mathcal{F}_{t,n})_{t, n\in \mathbb{N}}\). Assume that
    \[
    \sum_{t=1}^n\mathbb{E}\left(e_{t, n}e_{t, n}^T| \mathcal{F}_{t-1, n}\right) \rightarrow_p I
    \]
    and, for each \(\gamma > 0\),
    \[
    \sum_{t=1}^n \mathbb{E}\left(\left|\left|e_{t, n}\right|\right|^2 \mathbf{1}\left(\left|\left| e_{t, n}\right|\right| > \gamma\right)|\mathcal{F}_{t-1, n}\right) \rightarrow_p 0
    \]
    for \(n\rightarrow \infty\). Then, \(\sum_{t=1}^n e_{t, n} \rightarrow_w \mathcal{N}(0, I)\) for \(n\rightarrow \infty\).
\end{theorem}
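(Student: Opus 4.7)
The plan is to reduce the multivariate statement to the scalar case via the Cram\'er--Wold device and then invoke the classical characteristic-function argument for scalar MDA CLTs. For any fixed $\lambda \in \mathbb{R}^d$, define $\xi_{t,n} = \lambda^T e_{t,n}$. Then $(\xi_{t,n})$ is a scalar martingale difference array wrt.\ $(\mathcal{F}_{t,n})$, and both hypotheses transfer: $\sum_{t=1}^n \mathbb{E}(\xi_{t,n}^2 | \mathcal{F}_{t-1,n}) = \lambda^T \bigl(\sum_{t=1}^n \mathbb{E}(e_{t,n}e_{t,n}^T|\mathcal{F}_{t-1,n})\bigr)\lambda \rightarrow_p \|\lambda\|^2$ by the conditional variance assumption, while the pointwise bound $\xi_{t,n}^2 \mathbf{1}(|\xi_{t,n}|>\gamma) \le \|\lambda\|^2 \|e_{t,n}\|^2 \mathbf{1}(\|e_{t,n}\| > \gamma/\|\lambda\|)$ propagates the Lindeberg condition. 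It therefore suffices to show $S_n := \sum_{t=1}^n \xi_{t,n} \rightarrow_w \mathcal{N}(0,\|\lambda\|^2)$ for every $\lambda$.

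For the scalar case I would establish pointwise convergence of characteristic functions, $\mathbb{E}\exp(iuS_n) \to \exp(-u^2\|\lambda\|^2/2)$ for each $u\in\mathbb{R}$. Setting $\sigma_{t,n}^2 = \mathbb{E}(\xi_{t,n}^2|\mathcal{F}_{t-1,n})$ and $V_t = \sum_{s\le t}\sigma_{s,n}^2$, the standard device is the telescoping identity
$$\exp(iuS_n) - \exp(-u^2 V_n/2) = \sum_{t=1}^n \exp\!\bigl(iuS_{t-1} - \tfrac{u^2}{2}(V_n-V_t)\bigr)\bigl[\exp(iu\xi_{t,n}) - \exp(-\tfrac{u^2}{2}\sigma_{t,n}^2)\bigr].$$
Taking expectations and iterating the tower property conditional on $\mathcal{F}_{t-1,n}$, together with the second-order Taylor expansions of $\exp(iu\xi_{t,n})$ and $\exp(-u^2\sigma_{t,n}^2/2)$, the constant, linear, and quadratic contributions cancel (using $\mathbb{E}(\xi_{t,n}|\mathcal{F}_{t-1,n})=0$ and $\mathbb{E}(\xi_{t,n}^2|\mathcal{F}_{t-1,n})=\sigma_{t,n}^2$), leaving only the Taylor remainders to control.

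The main obstacle is precisely the control of these remainders. Using $|e^{ix} - 1 - ix + x^2/2| \le \min(x^2, |x|^3/6)$, I would split each summand on $\{|\xi_{t,n}|\le\gamma\}$ versus its complement: on the good event the remainder bound $|u|^3 \gamma \xi_{t,n}^2/6$ sums to $O(|u|^3 \gamma)$ via the conditional variance convergence, hence is arbitrarily small for small $\gamma$; on the bad event the cruder bound $u^2 \xi_{t,n}^2$ sums to exactly the Lindeberg quantity, which vanishes by hypothesis. The analogous remainder from $\exp(-u^2\sigma_{t,n}^2/2) = 1 - u^2\sigma_{t,n}^2/2 + O(u^4\sigma_{t,n}^4)$ sums to at most $u^4 V_n \max_t \sigma_{t,n}^2$, and $\max_t \sigma_{t,n}^2 \rightarrow_p 0$ is a standard consequence of the Lindeberg condition. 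Combined with $\exp(-u^2V_n/2) \rightarrow_p \exp(-u^2\|\lambda\|^2/2)$ and dominated convergence (all quantities have modulus at most $1$), this yields the required convergence of characteristic functions, hence the stated weak convergence. A detailed write-up following this plan is given in Chapter VIII of \cite{pollard1984convergence}, so we treat the theorem as a classical reference rather than reproducing the argument in full.
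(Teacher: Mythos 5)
Your proposal is correct and is essentially the paper's own treatment: the paper offers no proof of this theorem, presenting it as a classical result and citing Theorem 1 of Chapter VIII in \cite{pollard1984convergence} --- exactly the reference you defer to. Your Cram\'er--Wold reduction and Lindeberg-transfer computation are sound and in fact make explicit the step the paper leaves implicit in passing from Pollard's scalar statement to the multivariate form stated here.
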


\section{Gaussian Approximation} \label{app: approx}

In this section we detail how the Gaussian approximation described in Section 3.1 is achieved. Throughout we assume that \(X_{t, \theta}\) and \(\epsilon_{t, \theta}\) satisfy Assumptions \ref{ass: U} and \ref{ass: M} with \(F_\theta = I\). The key result is the strong invariance principle of Theorem \ref{thm: asip}. Although it is stated in terms of martingale difference arrays, the version we need (Lemma \ref{lem: gaus_app} below) follows easily from Proposition 8 in \cite{lundborg2021conditional} and Assumptions \ref{ass: mom}, \ref{ass: c}, and \ref{ass: sig}.

\begin{lemma} \label{lem: gaus_app}
    We can enlarge the initial probability space such that there exists a family of stochastic processes \((\rho_{t, \theta})_{t\ge 1, \theta\in\Theta}\) where, for each \(\theta\), the sequence \(\rho_{t, \theta}\) is i.i.d. \(d\)-dimensional gaussian with mean 0 and covariance matrix \(\Sigma\) and such that
    \[
    \sup_{\theta\in\Theta}\sup_{1 \le k \le n}\left|\left|\sum_{t = 1}^k \epsilon_{t, \theta} - \sum_{t = 1}^k \rho_{t, \theta}\right|\right| = o_p\left(n^{\frac{1}{2}-\beta}\right).
    \]
    for some \(\beta>0\).
\end{lemma}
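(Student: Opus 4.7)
The plan is to combine the strong invariance principle of Theorem \ref{thm: asip} with the sequential characterization of uniform convergence (Proposition 8 in the supplementary material of \cite{lundborg2021conditional}). The key preparatory step is to normalize the innovations to have identity conditional covariance so that Theorem \ref{thm: asip} applies directly, after which the bound is transported back via the inverse transformation.

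First, I would enlarge $(\Omega,\mathcal{F},\mathbb{P})$ once to carry, for each $\theta\in\Theta$, an independent sequence $(U_{t,\theta})_{t\ge 1}$ of i.i.d.\ uniform $[0,1]$ variables, independent of $\{\epsilon_{s,\theta'}\}_{s,\theta'}$, to serve as the auxiliary randomizers in the coupling construction underlying Theorem \ref{thm: asip}. Next, define $\tilde\epsilon_{t,\theta} := \Sigma_\theta^{-1/2}\epsilon_{t,\theta}$. By Assumption \ref{ass: M}, $(\tilde\epsilon_{t,\theta})_{t\ge 1}$ is a stationary MDA with conditional covariance $I$, and Assumptions \ref{ass: mom}, \ref{ass: c}, \ref{ass: sig} give the uniform moment bound
\[
\sup_{t\ge 1,\,\theta\in\Theta}\mathbb{E}\|\tilde\epsilon_{t,\theta}\|^{2+\delta}\;\le\;\Bigl(\sup_{\theta\in\Theta}\sigma_{min}(\Sigma_\theta)^{-1}\Bigr)^{(2+\delta)/2}\sup_{\theta\in\Theta} c_\theta\;<\;\infty.
\]

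Now invoke the sequential characterization: it suffices to show that for every sequence $(\theta_n)\subset\Theta$, the approximation error evaluated at $\theta_n$ is $o_p(n^{1/2-\beta})$. Fix such a sequence and apply Theorem \ref{thm: asip} to the array $e_{t,n}:=\tilde\epsilon_{t,\theta_n}$, using the randomizers $(U_{t,\theta_n})_t$. This produces i.i.d.\ standard Gaussian $\rho_{t,n}$ with
\[
\mathbb{E}\sup_{1\le k\le n}\Bigl\|\sum_{t=1}^k e_{t,n}-\sum_{t=1}^k \rho_{t,n}\Bigr\|=O\!\left(n^{1/(2+\delta)}(\log n)^{(1+\delta)/(2(2+\delta))}\right),
\]
where the implicit constant depends only on $d$, $\delta$ and the uniform moment bound above, and hence not on the particular sequence $(\theta_n)$. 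Setting $\rho_{t,\theta_n}:=\Sigma_{\theta_n}^{1/2}\rho_{t,n}$ yields an i.i.d.\ $\mathcal{N}(0,\Sigma_{\theta_n})$ sequence, and
\[
\sup_{1\le k\le n}\Bigl\|\sum_{t=1}^k\epsilon_{t,\theta_n}-\sum_{t=1}^k\rho_{t,\theta_n}\Bigr\|\;\le\;\sigma_{max}(\Sigma_{\theta_n})^{1/2}\sup_{1\le k\le n}\Bigl\|\sum_{t=1}^k e_{t,n}-\sum_{t=1}^k\rho_{t,n}\Bigr\|,
\]
whose prefactor is uniformly bounded by Assumption \ref{ass: sig}. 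Markov's inequality then delivers the required $o_p(n^{1/2-\beta})$ rate for any $\beta\in(0,\,1/2-1/(2+\delta))$, a nonempty interval because $\delta>0$.

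The principal obstacle is the uniformity in $\theta$: Theorem \ref{thm: asip} is a pointwise coupling whose error constant in general depends on the joint law of the array, and $\Theta$ may be uncountable, so neither a common enlargement nor a common rate is automatic. Two ingredients resolve this. First, inspection of the bound in Theorem \ref{thm: asip} shows that the constants depend only on $d$, $\delta$ and the uniform $(2+\delta)$-th moment bound—all of which are finite by Assumption \ref{ass: U}—so the rate is the same for every $\theta$. Second, the sequential characterization of uniform convergence converts the pointwise-along-$(\theta_n)$ statement into the uniform conclusion of the lemma, avoiding any need to bound the random variable $\sup_{\theta}\sup_k\|\cdot\|$ directly by measurable-selection arguments.
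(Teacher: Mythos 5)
Your proposal is correct and follows essentially the same route as the paper, which proves this lemma by exactly the combination you describe: normalize to identity conditional covariance, apply the strong invariance principle of Theorem \ref{thm: asip} (whose constants depend only on $d$, $\delta$ and the uniform $(2+\delta)$-moment bound supplied by Assumptions \ref{ass: mom}, \ref{ass: c} and \ref{ass: sig}), and pass to uniformity via the sequential characterization of Proposition 8 in \cite{lundborg2021conditional}. Your write-up in fact supplies more detail than the paper, which states only that the lemma ``follows easily'' from these ingredients; your rate computation $\beta\in\bigl(0,\tfrac{1}{2}-\tfrac{1}{2+\delta}\bigr)$ and the transport back through $\Sigma_\theta^{1/2}$ are the intended steps.
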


\begin{lemma} \label{lem: cov_wlln}
    For any \(\epsilon > 0\), we have 
    \[
    n^{-\frac{1}{1 + \delta}-\epsilon} \sum_{t=1}^n \left(\epsilon_{t, \theta}\epsilon_{t, \theta}^T - \Sigma\right)\rightarrow_p 0
    \]
    uniformly over \(\Theta\).
\end{lemma}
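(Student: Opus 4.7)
The plan is to recognize that $M_{t,\theta}:=\epsilon_{t,\theta}\epsilon_{t,\theta}^T-\Sigma_\theta$ is a stationary martingale difference sequence with respect to $(\mathcal{F}_{t,\theta})_{t\in\mathbb{N}}$ and then to apply the martingale difference array weak law of large numbers (Theorem \ref{thm: mda_wlln}) to its vectorization. Uniformity over $\Theta$ is handled by working along an arbitrary parameter sequence $(\theta_n)_{n\in\mathbb{N}}\subset\Theta$ and invoking the sequential characterization of uniform convergence in probability discussed right after Definition \ref{def: unif_conv}.

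In more detail, $\mathbb{E}(\epsilon_{t,\theta}\epsilon_{t,\theta}^T\mid \mathcal{F}_{t-1,\theta})=\Sigma_\theta$ by Assumption \ref{ass: cov}, so $M_{t,\theta}$ is indeed a martingale difference adapted to $\mathcal{F}_{t,\theta}$. The identity $\|xx^T\|_F=\|x\|^2$ together with the triangle inequality yields $\|M_{t,\theta}\|\le \|\epsilon_{t,\theta}\|^2+\|\Sigma_\theta\|$, and Assumptions \ref{ass: c}, \ref{ass: sig}, and \ref{ass: mom} combined give a uniform bound $\sup_{t\in\mathbb{N},\theta\in\Theta}\mathbb{E}\|M_{t,\theta}\|^{1+\delta'}<\infty$ for an appropriate $\delta'>0$ derived from $\delta$. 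For any sequence $(\theta_n)\subset\Theta$, the array $M_{t,n}:=M_{t,\theta_n}$ is therefore a martingale difference array satisfying the hypotheses of Theorem \ref{thm: mda_wlln}. Applied coordinatewise to $\mathrm{vec}(M_{t,n})$, that theorem furnishes the claimed $o_p$ rate, and since $(\theta_n)$ was arbitrary the uniform conclusion follows.

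The only substantive bookkeeping point, which will be the main (though mild) obstacle, is tracking the exponent: the quadratic nonlinearity in $\epsilon_{t,\theta}$ halves the usable power, so the $(2+\delta)$-moment bound on $\epsilon_{t,\theta}$ translates into only a $(1+\delta/2)$-moment bound on $M_{t,\theta}$, and it is this translated exponent that is fed into Theorem \ref{thm: mda_wlln}. Since the $\epsilon$ in the statement of the lemma is arbitrary, this is enough to absorb the resulting slack in the exponent, and no further probabilistic machinery (no BDG, no truncation) is required.
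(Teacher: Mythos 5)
Your proposal is correct and follows essentially the same route as the paper: recognize that \(\epsilon_{t,\theta}\epsilon_{t,\theta}^T-\Sigma_\theta\) is a martingale difference sequence with a uniform \((1+\delta/2)\)-moment bound inherited from Assumptions \ref{ass: cov}, \ref{ass: mom}, \ref{ass: c}, and \ref{ass: sig}, then apply Theorem \ref{thm: mda_wlln} (the paper reduces to scalar bilinear forms \(a^T(\epsilon_{t,\theta}\epsilon_{t,\theta}^T-\Sigma)b\) rather than vectorizing, which is immaterial) together with the sequential characterization of uniform convergence. One small caveat: your final claim that the arbitrary \(\epsilon\) absorbs the exponent slack goes the wrong way, since feeding \(1+\delta/2\) into Theorem \ref{thm: mda_wlln} yields the rate \(n^{-\frac{1}{1+\delta/2}-\epsilon}\) and \(\frac{1}{1+\delta/2}>\frac{1}{1+\delta}\), so the stated rate is not literally recovered for small \(\epsilon\) — but the paper's own proof has exactly the same wrinkle, and the downstream applications only require some negative power.
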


\begin{proof}
    Fix \(a, b\in\mathbb{R}^d\) and define \(\xi_{t, \theta}=a^T(\epsilon_{t, \theta}\epsilon_{t, \theta}^T - \Sigma)b\) so that \(\xi_{t, \theta}\) is a one-dimensional martingale difference sequence for each \(\theta\in\Theta\). By Cauchy-Schwartz, we have, for all \(t\in\mathbb{N}\) and \(\theta\in\Theta\), \(|\xi_{t, \theta}| \le ||a\Sigma^{\frac{1}{2}}||||b\Sigma^{\frac{1}{2}}||\left(||\Sigma^{-\frac{1}{2}}\epsilon_{t, \theta}||^2 + 1\right)\) so that, by assumption,
    \[
    \sup_{\theta\in\Theta}\mathbb{E}|\xi_{t, \theta}|^{1+\frac{\delta}{2}} < \infty.
    \]
    The result then follows from Theorem \ref{thm: mda_wlln} in combination with Proposition 8 in \cite{lundborg2021conditional}.
\end{proof}

The following is similar to Lemma 4 in \cite{mikusheva2007uniform}. It shows that many of the important statistics can be replaced by their Gaussian counterpart.

\begin{lemma}\label{lem: gaus_rep}
    There exists \(\beta > 0\) such that
    \begin{enumerate}[label=(\alph*)]
        \item \(\sup_{\theta\in\Theta}\sup_{1\le t\le n}||X_{t, \theta}/\sqrt{n} - Y_{t, \theta}/\sqrt{n}|| = o_p(n^{-\beta})\),
        \item \label{lem: gaus_rep_b}\(\sup_{\theta\in\Theta}\sup_{1\le t\le n}\{ ||X_{t, \theta}/\sqrt{n}|| + ||Y_{t, \theta}/\sqrt{n}||\} = O_p(1)\),
        \item \(\sup_{\theta\in\Theta}||\sum_{t=1}^n\sum_{s=1}^t \epsilon_{s, \theta}\epsilon_{t, \theta}^T/n - \rho_{s, \theta}\rho_{t, \theta}^T/n|| = o_p(n^{-\beta})\).
        \item \(\sup_{\theta\in R_{n, d}}||H^{-\frac{1}{2}}(S_{XX}- S_{YY}) H^{-\frac{1}{2}}|| = o_p(n^{1 - \eta - \beta})\).
        \item \(\sup_{\theta\in R_{n, d}}||\sqrt{n}H^{-\frac{1}{2}}(S_{X\epsilon}- S_{Y\rho})|| = o_p(n^{\frac{3}{2}(1-\eta) - \beta})\).
    \end{enumerate}
\end{lemma}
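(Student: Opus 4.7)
The plan is to deduce the five parts of Lemma \ref{lem: gaus_rep} from the coupling estimate of Lemma \ref{lem: gaus_app}, Lemma \ref{lem: cov_wlln}, the spectral bounds given by Assumptions \ref{ass: eig} and \ref{ass: jord}, and the normalization bounds of Lemma \ref{lem: norm_asym}. I set $\Delta_t = \epsilon_t - \rho_t$ and $E^\Delta_k = \sum_{s=1}^k \Delta_s$, so that Lemma \ref{lem: gaus_app} reads $\sup_\theta \sup_{k\le n} ||E^\Delta_k|| = o_p(n^{1/2 - \beta_0})$ for some $\beta_0 > 0$; a common $\beta > 0$ for all five parts is obtained by taking a minimum at the end.

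For part (a), Abel's summation applied to $X_t - Y_t = \sum_{s=1}^t \Gamma^{t-s}\Delta_s$ gives
\[
X_t - Y_t = E^\Delta_t + \sum_{s=1}^{t-1}\Gamma^{t-s-1}(\Gamma - I)E^\Delta_s,
\]
so $||X_t - Y_t|| \le \sup_{k\le t}||E^\Delta_k|| \bigl(1 + \sum_{s=0}^{n-2} ||(\Gamma - I)\Gamma^s||\bigr)$. The decisive step is the uniform summability $\sup_{\theta\in\Theta}\sum_{s=0}^\infty ||(\Gamma - I)\Gamma^s|| < \infty$. Splitting $\Gamma$ via Assumption \ref{ass: jord} (with $F_\theta = I$) into a diagonal block $\Gamma_1$ with eigenvalues $|\lambda_i| > 1 - \alpha$ and a Jordan-like block $\Gamma_2$ with $|\lambda_i| \le 1 - \alpha$, I would bound each block separately: for $\Gamma_1$, the $i$-th diagonal entry of $(\Gamma_1 - I)\Gamma_1^s$ is $(\lambda_i - 1)\lambda_i^s$, whose sum in $s$ is $|\lambda_i - 1|/(1-|\lambda_i|) \le r_\alpha/|\lambda_i| \le r_\alpha/(1-\alpha)$ by Assumption \ref{ass: eig}; for $\Gamma_2$, $||\Gamma_2^s||$ decays like a polynomial times $(1-\alpha)^s$ so $\sum_s||\Gamma_2^s||$ is uniformly bounded, and $||\Gamma_2 - I||$ is bounded by Assumption \ref{ass: jord}. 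Dividing by $\sqrt{n}$ and invoking Lemma \ref{lem: gaus_app} concludes (a). For part (b), the bound in (a) reduces the task to $\sup_t||Y_t||/\sqrt{n} = O_p(1)$; since $Y_t$ is centered Gaussian with $\mathbb{E}||Y_t||^2 = O(n)$ uniformly (using the uniform bound $\sup_\theta \sup_s ||\Gamma^s|| < \infty$ already implicit in (a)), I would verify the Kolmogorov--Chentsov moment criterion $\mathbb{E}||Y_{\lfloor nt\rfloor}/\sqrt{n} - Y_{\lfloor ns\rfloor}/\sqrt{n}||^4 \le C|t-s|^2$ uniformly in $\theta$, obtained via Gaussian fourth-moment identities, and conclude tightness in $D([0,1])$ of the rescaled process.

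For part (c), expanding $\epsilon_s\epsilon_t^T - \rho_s\rho_t^T = \rho_s\Delta_t^T + \Delta_s\rho_t^T + \Delta_s\Delta_t^T$ and summing over $s \le t$ yields
\[
\sum_{t,s\le t}(\epsilon_s\epsilon_t^T - \rho_s\rho_t^T) = \sum_t E^\rho_{t-1}\Delta_t^T + \sum_t E^\Delta_{t-1}\rho_t^T + \sum_t E^\Delta_{t-1}\Delta_t^T + \sum_t(\epsilon_t\epsilon_t^T - \rho_t\rho_t^T).
\]
The diagonal term is $o_p(n^{1-\beta})$ by Lemma \ref{lem: cov_wlln}. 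For the martingale-difference sum $\sum_t E^\Delta_{t-1}\rho_t^T$ (adapted to the enlarged filtration) I would use conditional orthogonality, $\mathbb{E}||\cdot||^2 \le \tr(\Sigma)\sum_t \mathbb{E}||E^\Delta_{t-1}||^2$, and control this sum by truncating on the high-probability event $\{\sup_k||E^\Delta_k|| \le Mn^{1/2-\beta_0}\}$ while using the coarse $L^2$ bound $\mathbb{E}||E^\Delta_k||^2 \le Cn$ on the complement. The remaining cross sums $\sum_t E^\rho_{t-1}\Delta_t^T$ and $\sum_t E^\Delta_{t-1}\Delta_t^T$ are reshaped by Abel summation into quantities of the form $E^\rho_n(E^\Delta_n)^T$ and $\sum_t \rho_t(E^\Delta_t)^T$, which I would bound using $||E^\rho_n|| = O_p(\sqrt{n})$ and Lemma \ref{lem: gaus_app}. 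For parts (d) and (e), I would use the decomposition $X_t = Y_t + D_t$ with $D_t = X_t - Y_t$. For (d), writing $S_{XX} - S_{YY} = \frac{1}{n}\sum_t(D_{t-1}X_{t-1}^T + Y_{t-1}D_{t-1}^T)$ and combining (a) and (b) gives $||S_{XX} - S_{YY}|| = o_p(n^{-\beta})\cdot\sqrt{n}\cdot O_p(\sqrt{n}) = o_p(n^{1-\beta})$, and multiplication by $||H^{-1/2}||^2 = O(n^{-\eta})$ from Lemma \ref{lem: norm_asym}(b) yields the stated rate. Part (e) is analogous after the decomposition $X_{t-1}\epsilon_t^T - Y_{t-1}\rho_t^T = Y_{t-1}\Delta_t^T + D_{t-1}\rho_t^T + D_{t-1}\Delta_t^T$, with summation by parts on the $Y_{t-1}\Delta_t^T$ term mirroring the bookkeeping used in (c).

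The main obstacle will be part (c): the $L^1$-type bound from Lemma \ref{lem: gaus_app} on $\sup_k||E^\Delta_k||$ is not directly compatible with the $L^2$ orthogonality required for the martingale cross term $\sum_t E^\Delta_{t-1}\rho_t^T$, and bridging this gap via a careful truncation argument is the most delicate ingredient. Once parts (a)--(c) are in hand, parts (d) and (e) amount to algebraic bookkeeping, and (b) is a standard tightness argument using the uniform boundedness of $||\Gamma^s||$ already established for (a).
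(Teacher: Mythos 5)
Your parts (a) and (d) coincide with the paper's argument: the paper likewise sums by parts to write $X_t-Y_t$ as $\bigl[(\Gamma-I)\sum_{s}\Gamma^{t-s}+I\bigr]$ acting on the coupled partial sums and invokes Assumptions \ref{ass: eig} and \ref{ass: jord} for exactly the uniform bound you spell out, and (d) is the same two-line product estimate. Your (b) is a workable but heavier alternative; the paper avoids any tightness or Kolmogorov--Chentsov argument by reusing the same identity to bound $\sup_t\|Y_t\|/\sqrt n$ by a uniformly bounded matrix norm times $\sup_t\|\sum_{s\le t}\rho_{s,\theta}\|/\sqrt n$, whose law is $\theta$-free after multiplying by $\Sigma^{-1/2}$. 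The genuine gap is in (c). The paper never touches the cross terms: it symmetrizes the lower-triangular double sum into $\frac{1}{2n}(\sum_t\epsilon_t)(\sum_t\epsilon_t)^T$ plus the diagonal term $\frac{1}{2n}\sum_t(\epsilon_t\epsilon_t^T-\Sigma)$ handled by Lemma \ref{lem: cov_wlln}, so the coupling of Lemma \ref{lem: gaus_app} is applied only to the \emph{total} sums and no martingale structure of the coupled pair is ever needed. Your route, by contrast, requires $\mathbb{E}\bigl(\rho_t\mid\sigma(E^\Delta_1,\dots,E^\Delta_{t-1})\bigr)=0$, i.e.\ that $(\epsilon_t,\rho_t)$ is a martingale difference array with respect to a \emph{common} filtration; nothing in Lemma \ref{lem: gaus_app} or Theorem \ref{thm: asip} asserts this, and it would have to be extracted from the coupling construction. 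Moreover the truncation you propose is not adapted (the event $\{\sup_k\|E^\Delta_k\|\le Mn^{1/2-\beta_0}\}$ is not $\mathcal{F}_{t-1}$-measurable), so you would need a stopping-time version. You flag this honestly, but it is a missing ingredient rather than a detail, and the paper's symmetrization shows it can be bypassed entirely.

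In (e) your decomposition $X_{t-1}\epsilon_t^T-Y_{t-1}\rho_t^T=Y_{t-1}\Delta_t^T+D_{t-1}\rho_t^T+D_{t-1}\Delta_t^T$ leaves the middle term under-controlled. The only bound available from (a) and (b) is $\frac1n\|\sum_t D_{t-1}\rho_t^T\|\le\sup_t\|D_t\|\cdot\frac1n\sum_t\|\rho_t\|=o_p(n^{1/2-\beta})$, and after multiplying by $\sqrt n\,\sigma_{max}(H^{-1/2})=O(n^{1/2-\eta/2})$ this gives only $o_p(n^{1-\eta/2-\beta})$. The lemma is used with $\eta$ close to $1$ (see the discussion around \eqref{eq: gaus_rep}), where this exponent is about $\tfrac12-\beta>0$ while the target exponent $\tfrac32(1-\eta)-\beta$ is negative; so the naive bound fails precisely in the regime that matters. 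You must extract a factor $\|\Gamma-I\|=O(n^{-\eta})$ from this term as well. The paper does so by summing $S_{X\epsilon}$ and $S_{Y\rho}$ by parts \emph{separately} before differencing, so that every non-boundary contribution carries a factor $(\Gamma-I)$ and the leftover double sum of errors is exactly the quantity controlled in part (c).
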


\begin{proof}
    For the proof of part \textit{(a)} we use summation by parts to write
    \begin{align*}
        X_{t, \theta} 
            &= \sum_{s=1}^t\Gamma^{t-s}\epsilon_{s, \theta} 
            = \sum_{s=1}^t \epsilon_{s, \theta} - \sum_{s=1}^{t-1}\left(\Gamma^{t - s + 1} - \Gamma^{t - s}\right)\sum_{k=1}^{s+1}\epsilon_{k, \theta} \\
            &= \sum_{s=1}^t \epsilon_{s, \theta} - \left(\Gamma - I\right)\sum_{s=1}^{t-1}\Gamma^{t - s}\sum_{k=1}^{s+1}\epsilon_{k, \theta}
    \end{align*}
    and a similar expression holds for \(Y_{t, \theta}\). Thus,
    \begin{multline*}
       \sup_{\theta\in\Theta}\sup_{1\le t\le n}\frac{1}{\sqrt{n}}\left|\left|X_{t, \theta} - Y_{t, \theta}\right|\right|\le \sup_{\theta\in\Theta}\sup_{1\le t \le n}\left|\left|\left(\Gamma - I\right)\sum_{s=1}^{t-1}\Gamma^{t-s} + I\right|\right| \\
       \times \frac{1}{\sqrt{n}}\sup_{\theta\in\Theta}\sup_{1\le t \le n}\left|\left|\sum_{s=1}^t \epsilon_{s, \theta} - \sum_{s=1}^t \rho_{s, \theta}\right|\right|.
    \end{multline*}
    Since the first term on the right hand side is bounded by Assumptions \ref{ass: eig} and \ref{ass: jord}, Lemma \ref{lem: gaus_app} yields \textit{(a)}.

    To prove \textit{(b)}, we start with the same expression for \(Y_{t, \theta}\) as above. This gives us
    \[
    \sup_{\theta\in\Theta}\sup_{1\le t\le n}\frac{Y_{t, \theta}}{\sqrt{n}} \le \sup_{\theta\in\Theta}\sup_{1\le t \le n}\left|\left|\left(\Gamma - I\right)\sum_{s=1}^{t-1}\Gamma^{t-s} + I\right|\right|\frac{1}{\sqrt{n}}\sup_{\theta\in\Theta}\sup_{1\le t \le n}\left|\left|\sum_{s=1}^t \rho_{s, \theta}\right|\right|.
    \]
    Again, the first term on the right hand side is bounded uniformly over \(n\). For the second term, we have
    \begin{align*}
    \frac{1}{\sqrt{n}}\sup_{\theta\in\Theta}\sup_{1\le t \le n}\left|\left|\sum_{s=1}^t \rho_{s, \theta}\right|\right| 
        &\le \sup_{\theta\in\Theta}\left|\left|\Sigma^{\frac{1}{2}}\right|\right|\sup_{1\le t \le n}\left|\left|\frac{1}{\sqrt{n}}\sum_{s=1}^t \Sigma^{-\frac{1}{2}}\rho_{s, \theta}\right|\right| \\
        & \le C \sup_{\theta\in\Theta}\sup_{1\le t \le n}\left|\left|\frac{1}{\sqrt{n}}\sum_{s=1}^t \Sigma^{-\frac{1}{2}}\rho_{s, \theta}\right|\right| = O_p(1)
    \end{align*}
    since \(\Sigma^{-\frac{1}{2}}\rho_{s, \theta}\) is i.i.d. standard normal for all \(\theta\in\Theta\). The result then follows from \textit{(a)}.

    For \textit{(c)} we start with
    \[
    \frac{1}{n}\sum_{t=1}^n\sum_{s=1}^t \epsilon_{s, \theta}\epsilon_{t, \theta}^T = \frac{1}{2n}\left(\sum_{t=1}^n \epsilon_{t, \theta}\right)\left(\sum_{t=1}^n \epsilon_{t, \theta}\right)^T + \frac{1}{2n}\sum_{t=1}^{n}\left(\epsilon_{t, \theta}\epsilon_{t, \theta}^T - \Sigma\right) + \frac{1}{2}\Sigma.
    \]
    Lemma \ref{lem: cov_wlln} then yields
    \[
    \sup_{\theta\in\Theta}\left|\left|\frac{1}{n}\sum_{t=1}^n\sum_{s=1}^t \epsilon_{s, \theta}\epsilon_{t, \theta}^T - \frac{1}{2n}\left(\sum_{t=1}^n \epsilon_{t, \theta}\right)\left(\sum_{t=1}^n \epsilon_{t, \theta}\right)^T - \frac{1}{2}\Sigma\right|\right| =  o_p(n^{-\beta})
    \]
    and a similar argument holds for \(\frac{1}{n}\sum_{t=1}^n\sum_{s=1}^t \rho_{s, \theta}\rho_{t, \theta}^T\). Thus,
    \begin{align*}
        \sup_{\theta\in\Theta}\left|\left|\frac{1}{n}\sum_{t=1}^n\sum_{s=1}^t \epsilon_{s, \theta}\epsilon_{t, \theta}^T - \rho_{s, \theta}\rho_{t, \theta}^T\right|\right|
            &= \sup_{\theta\in\Theta}\left|\left|\frac{1}{n}\sum_{t=1}^n\sum_{s=1}^n \epsilon_{s, \theta}\epsilon_{t, \theta}^T - \rho_{s, \theta}\rho_{t, \theta}^T\right|\right| + o_p(n^{-\beta}) \\ 
            &\le C_n\sup_{\theta\in\Theta}\left|\left|\frac{1}{\sqrt{n}}\sum_{t=1}^n\epsilon_{t, \theta} - \rho_{t, \theta}\right|\right| + o_p(n^{-\beta})
    \end{align*}
    where \(C_n = \sup_{\theta\in\Theta}\left\{\left|\left|\frac{1}{\sqrt{n}}\sum_{t=1}^n \epsilon_{t, \theta}\right|\right| + \left|\left|\frac{1}{\sqrt{n}}\sum_{t=1}^n \rho_{t, \theta}\right|\right|\right\}.\) Since the law of \(\frac{1}{\sqrt{n}}\sum_{t=1}^n \rho_{t, \theta}\) is equal to a \(d\)-dimensional Gaussian with mean 0 and covariance matrix \(\Sigma\), Assumption \ref{ass: sig} yields \(\sup_{\theta\in\Theta}\left|\left|\frac{1}{\sqrt{n}}\sum_{t=1}^n \rho_{t, \theta}\right|\right| = O_p(1)\) and, by Lemma \ref{lem: gaus_app}, we get \(C_n = O_p(1)\) and therefore also the result in \textit{(c)}.

    To prove \textit{(d)}, we first note that 
    \[
    \sup_{\theta\in\Theta}\frac{1}{n}\left|\left|S_{XX} - S_{YY}\right|\right| \le C_n \sup_{\theta\in\Theta} \sup_{1\le t\le n} \frac{1}{\sqrt{n}}\left|\left|X_{t, \theta} - Y_{t, \theta}\right|\right|
    \]
    where \(C_n = \sup_{\theta\in\Theta}\frac{1}{\sqrt{n}}\left(\sup_{1\le t\le n}\left|\left|X_{t, \theta}\right|\right| + \sup_{1\le t\le n}\left|\left| Y_{t, \theta}\right|\right|\right).\) From Lemma \ref{lem: norm_asym}, we find that 
    \begin{equation} \label{eq: rate_sqrtnh}
       \sup_{\theta\in R_{n, d}}\sigma_{max}\left(H^{-\frac{1}{2}}\right) = \sup_{\theta\in R_{n, d}}\left(\sigma_{min}(H)\right)^{-\frac{1}{2}} = O(n^{-\frac{\eta}{2}}) 
    \end{equation}
    so, by part \textit{(a)} and \textit{(b)}, we get the result in \textit{(d)}.
    
    For the proof of \textit{(e)} we again use summation by parts to write
    \begin{align*}
    S_{X\epsilon} 
        &= \frac{1}{n} \left(X_{n-1, \theta}\sum_{t=1}^n \epsilon_{t, \theta}^T - \sum_{t=1}^{n-1}\left(X_{t, \theta} - X_{t-1, \theta}\right)\sum_{s=1}^t \epsilon_{s, \theta}^T\right) \\
        & = \frac{1}{n}\left(X_{n-1, \theta}\sum_{t=1}^n \epsilon_{t, \theta}^T - \left(\Gamma - I\right)\sum_{t=1}^{n-1}\sum_{s=1}^t X_{t-1, \theta} \epsilon_{s, \theta}^T - \sum_{t=1}^{n-1}\sum_{s=1}^t \epsilon_{t, \theta}\epsilon_{s, \theta}^T\right),
    \end{align*}
    and similarly for \(S_{Y\rho}\). We have, for all \(1\le t \le n\),
    \begin{multline*}
    \sup_{\theta\in\Theta}\left|\left|\frac{1}{n}\sum_{s=1}^t X_{t-1, \theta}\epsilon_{s, \theta}^T - Y_{t-1, \theta} \rho_{s, \theta}^T\right|\right| \\
        \le C_n \sup_{\theta\in\Theta}\left\{\sup_{1\le k\le n}\frac{1}{\sqrt{n}}\left(\left|\left|X_{k, \theta} - Y_{k, \theta}\right|\right| + \left|\left|\sum_{s=1}^k \epsilon_{s, \theta} - \rho_{s, \theta}\right|\right|\right)\right\},    
    \end{multline*}
    where \(C_n = \sup_{\theta\in\Theta}\left\{\sup_{1\le k \le n}||\frac{1}{\sqrt{n}}X_{k, \theta}|| + \sup_{1\le k\le n}\left|\left|\frac{1}{\sqrt{n}}\sum_{s=1}^k \rho_{s, \theta}\right|\right|\right\} = O_p(1)\) by part \textit{(b)}. From part \textit{(a)} and Lemma \ref{lem: gaus_app} it then follows that
    \[
    \sup_{\theta\in\Theta}\left|\left|\frac{1}{n}\sum_{s=1}^t X_{t-1, \theta}\epsilon_{s, \theta}^T - Y_{t-1, \theta} \rho_{s, \theta}^T\right|\right| = o_p\left(n^{-\beta}\right).
    \]
    We also have \(\sup_{\theta\in R_{n, d}}n||(\Gamma - I)|| = o_p(n^{1-\eta})\) so that
    \[
    \sup_{\theta\in R_{n, d}}\left|\left|\frac{1}{n}\left(\Gamma - I\right)\sum_{t=1}^{n-1}\sum_{s=1}^t X_{t-1, \theta} \epsilon_{s, \theta}^T - Y_{t-1, \theta}\rho_{s, \theta}\right|\right| = o_p\left(n^{1-\eta - \beta}\right)
    \]
    and, by part \textit{(c)}, \(\sup_{\theta\in R_{n, d}}\left|\left| S_{X\epsilon} - S_{Y\rho}\right|\right| = o_p\left(n^{1-\eta -\beta}\right).\) The result then follows from \eqref{eq: rate_sqrtnh}.
\end{proof}

Lemma \ref{lem: gaus_app} allows us to assume that \(\epsilon_{t, \theta}\) is an i.i.d. Gaussian sequence with mean zero and covariance matrix \(\Sigma\) for each \(\theta\in R_{n, d}\). Indeed, let \(\rho_{t, \theta}\) be as given in the Lemma and define the family \((Y_{t, \theta})_{t\in\mathbb{N}, \theta\in\Theta}\) by
\[
Y_{t, \theta} = \Gamma Y_{t-1, \theta} + \rho_{t, \theta}, \quad Y_{0, \theta} = 0
\]
and define the corresponding sample covariances
\[
S_{YY} = \frac{1}{n}\sum_{t=1}^n Y_{t-1, \theta}Y_{t-1, \theta}^T, \quad S_{Y\rho} = \frac{1}{n}\sum_{t=1}^n Y_{t-1, \theta}\rho_{t, \theta}^T.
\]
Then, by Lemma \ref{lem: gaus_rep}, we can pick \(\eta\) close enough to 1 such that
\begin{equation} \label{eq: gaus_rep}
    \sup_{\theta\in R_{n, d}} \left\{\left|\left|H^{-\frac{1}{2}}\left(S_{XX}- S_{YY}\right) H^{-\frac{1}{2}}\right|\right| + \left|\left|\sqrt{n}H^{-\frac{1}{2}}\left(S_{X\epsilon} - S_{Y\rho}\right)\right|\right|\right\} = o_p(1).
\end{equation}

\section{Simulations}

\subsection{Confidence intervals} \label{app: sim_ci}

For each \(n\in\{50, 75, 100\}\) and \(d\in\{3, 4, 5\}\) the simulation experiment is repeated 1000 times. In each repetition, for \(i,j=1,..., d\), we draw \(U_{ij}\sim \text{Unif}([0, 1])\) and set \(\Gamma= U^{-1}\Lambda_n U\) where \(\Lambda_n\in\mathbb{R}^{d\times d}\) is diagonal with \(\Lambda_{n, 11} = 1\) and \(\Lambda_{n, ii} = 1 - (1/n)^{1/(i-1)}\) for \(i=2,..., d\). We then sample \(\epsilon_t\sim \mathcal{N}(0, \Sigma)\) i.i.d. for \(t=1,...,n\) with
\[
\Sigma = \frac{1}{2}\left(I + \mathbbm{1}\mathbbm{1}^T\right)
\]
where \(\mathbbm{1} = (1,...,1)^T\in\mathbb{R}^d\) and let
\[
X_t = \Gamma X_{t-1} + \epsilon_t \text{ for }t=1,..., n, \quad X_0 = 0.
\]
\(X_t\) is a sample from a VAR(1) process with \(\theta = (\Gamma, \Sigma, \cdot)\). We then compute \(CI_b\), \(CI_{IV}\), and \(CI_{LA}\) for this sample and record the length of each confidence interval and whether it contains \(\Gamma_{11}\).

\subsection{Predictive regression testing}

For both simulation experiments we fix \(d=4\) and \(\alpha=0.1\). This implies that \(\tilde{\Gamma}\in\mathbb{R}^{3\times 3}\). The two regimes correspond to two different choices of \(\tilde{\Gamma}\):
\begin{itemize}
    \item \emph{Mixed Regime}: In this setting \(\tilde{\Gamma}\) is chosen as above, that is, with roots of differing proximity to unity and with random eigenvectors sampled anew for every simulation run.
    \item \emph{Non-stationary Regime}: In this setting we set \(\tilde{\Gamma}=I\) so that \(\tilde{X}_{t}\) is a random walk.
\end{itemize}
In both regimes the errors are i.i.d. Gaussian with covariance matrix \(\Sigma\) as given above. 

To obtain Figure 1, we do the following: For each \(n\in\{10, 20,..., 200\}\), we draw two samples \(X_{t}\) from the VAR(1) processes given by the two choices of \(\tilde{\Gamma}\) and under the null \(H_0:\gamma=0\). We then compute the three tests on both samples recording whether the null was rejected or not. This is repeated 1000 times and the rejection rate is the proportion of times the null was rejected across all simulations.

For Figure 2, we do essentially the same thing except that we now fix \(n=100\) and perform the experiment across different choices of \(\gamma\neq 0\). In particular, we run the experiment for \(\gamma = \delta\mathbbm{1}\), \(\delta\in\{0.005, 0.01,..., 0.1\}\) and record the proportion of times the null was rejected across all 1000 simulations.

\subsection{EAM} \label{app: sim_algo}

We present a slightly generalized version of the EAM-algorithm. EAM stands for Evaluation-Approximation-Maximization and the algorithm can more or less be split into three steps. It is an algorithm for solving problems of the following form
\begin{align*}
    &\sup f(x) \\
    &\text{s.t. } g(x) \le c(x) \\
\end{align*}
over \(x\in\mathcal{X}\subset\mathbb{R}^p\) where \(f\), \(g\), and \(c\) are fixed scalar functions sufficiently smooth and satisfying certain requirements. In \cite{kaido2019confidence} it is required that \(f(x)=v^Tx\) for some \(v\in\mathbb{R}^p\). Here we only require that \(f(x)\) is convex and twice continuously differentiable on \(\mathcal{X}\). We assume that \(c\) is costly to evaluate. Without going into too much detail, the algorithm proceeds as follows:
\begin{enumerate}
    \item \emph{Initialization:} Randomly sample initial points \(x^{(1)},..., x^{(k)}\) from \(\mathcal{X}\) and evaluate \(c(x^{(i)})\) for \(i=1,..., k\). Set \(L=k\).
    \item Iterate the following three steps until convergence:
    \begin{enumerate}
        \item \emph{E-step}: Evaluate \(c(x^{(L)})\) and pick current optimum
        \[
        y^{*, L} = \max\left\{f\left(x^{(i)}\right) : g\left(x^{(i)}\right) \le c\left(x^{(i)}\right), i=1,...,L \right\}
        \]
        \item \emph{A-step}: Approximate \(x\mapsto c(x)\) by a Gaussian process regression model, with mean \(\mu\), constant variance \(\sigma^2\), and covariance kernel \(K_\beta(x - x') = \exp(-\sum_{i=1}^p |x_i - x'_i|^2/\beta_i)\), fitted on \((c(x^{(i)}), x^{(i)})\), \(i=1,..., L\). Fitting the model yields the mean function \(c_L(x)\) and the variance function \(s_L(x)\) as well as the fitted parameters \(\hat{\mu}_L, \hat{\sigma}_L, \hat{\beta}_L\).
        \item \emph{M-step}: With probability \(1-\epsilon\), let
        \[
        x^{(L+1)} = \argmax_{x\in\mathcal{X}} EI_L(x)
        \]
        and with probability \(\epsilon\) draw \(x^{(L+1)}\) randomly from \(\mathcal{X}\). Set \(L = L + 1\). 
    \end{enumerate}
\end{enumerate}
\(EI_L(x)\) is the \emph{expected improvement function} and it is given by
\[
EI_L(x)=\left(f(x) - y^{*, L}\right)_+\left(1 - \Phi\left(\frac{g(x) - c_L(x)}{\hat{\sigma}_L s_L(x)}\right)\right)
\]
where \(\cdot_+ = \max\{\cdot, 0\}\) and \(\Phi\) is the standard normal CDF. Note that the optimization problem in the M-step can be reformulated as a constrained optimization problem with smooth objective function and smooth constraints for which all derivatives are known and it can therefore be solved with standard solvers. A key observation is that we only evaluate \(c\) once per iteration. In practice this results in far fewer evaluations of \(c\) when compared to, say, grid methods.

This algorithm was intended to compute confidence intervals that arise as projections of confidence regions exactly as is the case for \(CI_b\). In this case we would simply take \(x = \Gamma\) and let \(f(\Gamma)=\Gamma_{11}\), \(g(\Gamma)=\hat{t}^2_\Gamma\), and \(c(\Gamma)=\tilde{q}_{n, \Gamma}(\alpha)\). It does not really matter that \(\Gamma\) is a matrix since we can just vectorize it and redefine all the functions correspondingly. This would give us the upper bound of \(CI_b\) and the lower bound can be found by taking \(f(\Gamma)=-\Gamma_{11}\).

Similarly, the EAM-algorithm can be used to compute \(\phi_b\). This is done by letting \(x=\tilde{\Gamma}\) and then \(f(\tilde{\Gamma})=\hat{t}^2_{0|\tilde{\Gamma}}\) with \(g\) and \(c\) as before, but for \(\tilde{\Gamma}\) instead of \(\Gamma\). Note that in both cases \(f\) and \(g\) are polynomials of \(\text{vec}(x)\) and are therefore smooth with known derivatives of all orders.\footnote{All code used for the simulations can be found at \url{https://github.com/cholberg/unif_inf_var}. Our implementation of the EAM-algorithm is based on \cite{kaido2017calibrated}.}

\section{Lag augmentation} \label{app: lag}

We shall prove that the lag augmented estimator converges in distribution to a normal distribution uniformly over \(\Theta\) upon which it follows that standard inference is uniformly valid by the same arguments as applied in the proof of Theorem \ref{thm: valid_cr}.

\begin{lemma} \label{lem: la_inf}
    \(\Gamma_{LA}\) be defined as in Section \ref{sec: lag_aug}. Assume \ref{ass: U} and \ref{ass: M}. Then, as \(n\rightarrow \infty\),
    \[
    \sqrt{n}\textnormal{vec}\left(\hat{\Gamma}_{LA} - \Gamma\right)\rightarrow_w \mathcal{N}\left(0, \Sigma^{-1}\otimes \Sigma\right)
    \]
    uniformly over \(\Theta\).
\end{lemma}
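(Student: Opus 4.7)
The plan is to combine the Frisch--Waugh--Lovell (FWL) theorem with the uniform rates of Theorem~\ref{thm: unif_app} in order to reduce $\sqrt n(\hat\Gamma_{LA}-\Gamma)$ to an expression in which the ``nuisance'' lag $X_{t-2}$ drops out to leading order, leaving the intrinsic martingale difference $\epsilon_{t-1}\otimes \epsilon_t$ to drive the limit. Concretely, let $\hat\Gamma_0 = \bigl(\sum_t X_{t-1}X_{t-2}^T\bigr)\bigl(\sum_t X_{t-2}X_{t-2}^T\bigr)^{-1}$ be the OLS slope in the auxiliary regression of $X_{t-1}$ on $X_{t-2}$, and set $u_t = X_{t-1}-\hat\Gamma_0 X_{t-2}$. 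FWL together with the orthogonality $\sum_t X_{t-2}u_t^T=0$ yields the clean identity
\[
\hat\Gamma_{LA}-\Gamma \;=\; \Bigl(\sum_{t}\epsilon_t u_t^T\Bigr)\Bigl(\sum_{t} u_t u_t^T\Bigr)^{-1}.
\]
As in Section~\ref{sec: ltu}, it suffices to argue the case $F_\theta=I$ and recover the general case by the transformation $\tilde X_{t,\theta}=FX_{t,\theta}$.

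The second step is to expand $u_t = \epsilon_{t-1} + (\Gamma-\hat\Gamma_0)X_{t-2}$ and show that, uniformly over $\Theta$,
\[
\tfrac{1}{n}\sum_t u_tu_t^T = \tfrac{1}{n}\sum_t \epsilon_{t-1}\epsilon_{t-1}^T + o_p(1), \qquad \tfrac{1}{\sqrt n}\sum_t \epsilon_t u_t^T = \tfrac{1}{\sqrt n}\sum_t \epsilon_t \epsilon_{t-1}^T + o_p(1).
\]
Every correction term has the schematic form $A\,S_{XX}^{-1}B$ where $A,B\in\{n^{-1}\sum X_{t-2}\epsilon_{t-1}^T,\; n^{-1}\sum X_{t-2}\epsilon_t^T\}$; Theorem~\ref{thm: unif_app} (applied to the shifted sample) furnishes the uniform rates $\sqrt nH^{-1/2}A,\,\sqrt nH^{-1/2}B=O_p(1)$ and $H^{1/2}S_{XX}^{-1}H^{1/2}=O_p(1)$, and these combine with the powers of $H$ cancelling exactly to give $O_p(1/n)$ in the denominator and $O_p(1/\sqrt n)$ in the numerator. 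The first sum on the right of the denominator converges in probability to $\Sigma$ uniformly by Lemma~\ref{lem: cov_wlln}.

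For the numerator, I apply the martingale difference array CLT (Theorem~\ref{thm: mda_clt}) to $\xi_{t,n} = n^{-1/2}(\epsilon_{t-1}\otimes \epsilon_t)$: it is adapted to $\mathcal F_t$ with $\mathbb E(\xi_{t,n}\mid \mathcal F_{t-1})=0$; its conditional covariance is $n^{-1}(\epsilon_{t-1}\epsilon_{t-1}^T)\otimes \Sigma$, whose sum converges in probability to $\Sigma\otimes\Sigma$ uniformly by Lemma~\ref{lem: cov_wlln}; and the Lindeberg condition reduces via H\"older's inequality to a uniform bound on $\mathbb E\|\epsilon_t\|^{2+\delta}\|\epsilon_{t-1}\|^{2+\delta}$, delivered by Assumption~\ref{ass: mom} and Cauchy--Schwarz. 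Uniformity is then obtained via the subsequence characterization of uniform convergence. A uniform Slutsky argument (Proposition~15 in the supplement to \cite{lundborg2021conditional}) combined with the identity $\mathrm{vec}(Z\Sigma^{-1})=(\Sigma^{-1}\otimes I)\mathrm{vec}(Z)$ then yields the asymptotic covariance $(\Sigma^{-1}\otimes I)(\Sigma\otimes\Sigma)(\Sigma^{-1}\otimes I) = \Sigma^{-1}\otimes \Sigma$, as claimed.

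The main obstacle is the uniform control of the correction terms in the second step: the naive bounds $\hat\Gamma_0-\Gamma=O_p(n^{-1/2})$ and $\sum X_{t-2}\epsilon_{t-1}^T=O_p(n)$ only give $O_p(1)$ for the cross terms, which is not enough. The Kronecker-type cancellation of $H$ described above is essential, and without the regime-uniform normalization built into Theorem~\ref{thm: unif_app} the lag-augmentation ``trick'' would yield only pointwise asymptotic normality, not the uniform statement required here.
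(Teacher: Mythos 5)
Your proposal is correct and follows essentially the same route as the paper: the FWL identity you write down is exactly the block-inverse decomposition the paper uses to reduce $\hat\Gamma_{LA}-\Gamma$ to $\bigl(S_{\epsilon\epsilon'}-S_{\epsilon X'}S_{X'X'}^{-1}S_{X'\epsilon'}\bigr)\bigl(S_{\epsilon'\epsilon'}-S_{\epsilon'X'}S_{X'X'}^{-1}S_{X'\epsilon'}\bigr)^{-1}$, after which both arguments kill the correction terms uniformly via the rates of Theorem~\ref{thm: unif_app} and conclude with the martingale difference array CLT and WLLN plus uniform Slutsky. Your explicit remark on the cancellation of the $H$-normalizations in the quadratic forms is a detail the paper leaves implicit, but it is the same mechanism, not a different proof.
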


\begin{proof}
    We write \(S_{\epsilon X'}= \sum_{t=2}^n \epsilon_{t}X_{t-2}^T/n\), \(S_{X'X'}=\sum_{t=2}X_{t-2}X_{t-2}^T/n\), \(S_{\epsilon'X'} = \sum_{t=2}\epsilon_{t-1}X_{t-2}^T/n\), \(S_{\epsilon'\epsilon'}=\sum_{t=2}^n\epsilon_{t-1}\epsilon_{t-1}^T\), and \(S_{\epsilon\epsilon'}=\sum_{t=2}^n\epsilon_t\epsilon_{t-1}^T\). Then,
    \begin{align*}
        \hat{\Gamma}_{LA} - \Gamma 
            &= S_{\epsilon\bar{X}} S_{\bar{X}\bar{X}}^{-1} D \\
            &= \left(S_{\epsilon X} - S_{\epsilon X'}S_{X'X'}^{-1}\left(S_{X'X'}\Gamma^T + S_{X'\epsilon'}\right)\right)\left(S_{\epsilon'\epsilon'} - S_{\epsilon'X'}S_{X'X'}^{-1}S_{X'\epsilon'}\right)^{-1} \\
            &= \left(S_{\epsilon\epsilon'} - S_{\epsilon X'}S_{X'X'}^{-1}S_{X'\epsilon'}\right)\left(S_{\epsilon'\epsilon'} - S_{\epsilon'X'}S_{X'X'}^{-1}S_{X'\epsilon'}\right)^{-1}
    \end{align*}
    where we used the relation \(X_{t-1} = \Gamma X_{t-2} + \epsilon_{t-1}\) multiple times. By Theorem \ref{thm: unif_app}, \(S_{\epsilon X'}S_{X'X'}^{-1}S_{X'\epsilon'}\rightarrow_p 0\) and \(S_{\epsilon' X'}S_{X'X'}^{-1}S_{X'\epsilon'}\rightarrow_p 0\) for \(n\rightarrow\infty\) uniformly over \(\Theta\). Furthermore, Theorem \ref{thm: mda_wlln} and \ref{thm: mda_clt} in the Appendix yield \(S_{\epsilon'\epsilon'} \rightarrow_p \Sigma\) and \(\sqrt{n}\textnormal{vec}(S_{\epsilon\epsilon'})\rightarrow_w \mathcal{N}(0, \Sigma\otimes\Sigma)\) for \(n\rightarrow \infty\) uniformly over \(\Theta\). Finally, since \(\Sigma\) is uniformly invertible and bounded on \(\Theta\), the uniform versions of the continuous mapping theorem and Slutsky's Lemma (Proposition 9 and Proposition 15 in \cite{lundborg2021conditional}), yield the desired result.
\end{proof}

\section{IVX} \label{app: ivx}

We shall prove that the IVX \(t^2\)-statistic converges in distribtuion to \(\chi^2_{d^2}\) uniformly over \(\Theta\) after which the rest follows by the same arguments as those applied in the proof of Theorem \ref{thm: valid_cr}.

\begin{theorem} \label{thm: ivx}
    Assume that Assumptions \ref{ass: M} and \ref{ass: U} are true. Let \(\hat{\Gamma}_{IV}\) be the IVX estimator and \(\hat{t}^2_{IV, \Gamma}\) the corresponding \(t^2\)-statistic as defined in Section \ref{sec: ivx}. Then, for \(n\rightarrow\infty\),
    \[
    \hat{t}^2_{IV, \Gamma}\rightarrow \chi^2_{d^2}
    \]
    uniformly over \(\Theta\).
\end{theorem}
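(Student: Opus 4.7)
The plan leverages a fortunate algebraic simplification peculiar to the $t^2$-form. Substituting the identity $\hat\Gamma_{IV} - \Gamma = \tilde S_{\epsilon Z}S_{XZ}^{-1}$, where $\tilde S_{\epsilon Z} := n^{-1}\sum_{t=1}^n \epsilon_tZ_{t-1}^T$, the $S_{XZ}$ and $S_{ZX}=S_{XZ}^T$ factors in the IVX statistic telescope to give
\[
\hat t^2_{IV,\Gamma} = \tr\bigl(\hat\Sigma^{-1}A_n S_{ZZ}^{-1}A_n^T\bigr), \qquad A_n := n^{-1/2}\sum_{t=1}^n \epsilon_t Z_{t-1}^T,
\]
a pure self-normalized quadratic form in the martingale $A_n$ with no residual dependence on $S_{XZ}$. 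Crucially, $\text{vec}(A_n) = n^{-1/2}\sum_t Z_{t-1}\otimes\epsilon_t$ is a sum of martingale differences with respect to $\mathcal F_{t-1,\theta}$ whose conditional quadratic variation equals \emph{exactly} $S_{ZZ}\otimes\Sigma$. Consequently the self-normalized quantity $B_n := \Sigma^{-1/2}A_nS_{ZZ}^{-1/2}$ satisfies $\text{vec}(B_n) = (S_{ZZ}^{-1/2}\otimes\Sigma^{-1/2})\text{vec}(A_n)$ and has conditional variance identically $I_{d^2}$.

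The heart of the proof is a uniform stable (random-norming) martingale CLT showing $\text{vec}(B_n)\to_w \mathcal N(0,I_{d^2})$ uniformly over $\Theta$. Along any sequence $\theta_n\in\Theta$, I would apply Theorem \ref{thm: mda_clt} to the pre-normalized differences $(S_{ZZ}^{-1/2}Z_{t-1})\otimes(\Sigma^{-1/2}\epsilon_t)/\sqrt n$: the conditional variance is identically the identity by construction, and the Lindeberg condition follows from Assumption \ref{ass: mom} combined with a uniform bound $\max_{t\le n}\|S_{ZZ}^{-1/2}Z_{t-1}\|^2 = o_p(n)$. Proposition 8 of \cite{lundborg2021conditional} then upgrades pointwise convergence along subsequences to uniformity. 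The critical input is uniform invertibility of $S_{ZZ}$ (and the associated maximal bound) across all of $\Theta$. Using Abel summation one has $Z_t = X_t - n^{-\beta}\sum_{s=0}^{t-1}\rho_n^{t-1-s}X_s$ with $\rho_n := 1-n^{-\beta}$, so $Z_t$ decomposes into a term driven directly by $X_t$ and a mildly-integrated weighted average of past $X_s$'s. In the stationary regime $R_{n,0}$ the first term dominates, $S_{ZZ} = S_{XX} + o_p(1)$, and uniform invertibility follows from Theorem \ref{thm: stat_asym}; in the non-stationary regime $R_{n,d}$ the averaging term dominates and the classical Phillips--Magdalinos calculations combined with the Gaussian approximation of Appendix \ref{app: approx} yield $n^{-\beta}S_{ZZ}\to_p \tfrac{1}{2}\Sigma$ uniformly; the mixed regimes $R_{n,k}$ are handled by the same block-partitioning and normalization arguments as in Section \ref{sec: mixed} and Lemma \ref{lem: mixed_norm_app}.

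Finally, uniform consistency of $\hat\Sigma$ for $\Sigma$ (Assumption \ref{ass: sig} together with the residual-based argument of Section \ref{sec: inf}) gives $\hat\Sigma^{-1}\Sigma^{1/2}\to_p \Sigma^{-1/2}$ uniformly, so Propositions 9 and 15 of \cite{lundborg2021conditional} combined with the algebraic identity
\[
\hat t^2_{IV,\Gamma} = \tr\bigl(\hat\Sigma^{-1}\Sigma^{1/2}B_nB_n^T\Sigma^{1/2}\bigr)
\]
yield $\hat t^2_{IV,\Gamma}\to_w \|\tilde B\|^2$ uniformly over $\Theta$, where $\text{vec}(\tilde B)\sim\mathcal N(0,I_{d^2})$. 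Since $\tilde B$ is a $d\times d$ matrix with $d^2$ independent standard Gaussian entries, $\|\tilde B\|^2\sim\chi^2_{d^2}$, as desired. The main obstacle will be uniform control of $S_{ZZ}$ in the mixed regime: the instrument inherits the block structure of $\Gamma$ at different rates along different eigen-directions, so the Abel decomposition must be tracked blockwise and reassembled, mirroring the argument in Section \ref{sec: mixed}, and the random-norming CLT must be justified carefully because $S_{ZZ}^{-1/2}$ is $\mathcal F_n$- rather than $\mathcal F_{t-1}$-measurable.
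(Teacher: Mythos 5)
Your high-level architecture matches the paper's: telescope the statistic to $\tr(n\hat\Sigma^{-1/2}S_{\epsilon Z}S_{ZZ}^{-1}S_{Z\epsilon}\hat\Sigma^{-1/2})$, prove a law of large numbers for a suitably normalized $S_{ZZ}$ and a martingale CLT for the correspondingly normalized $S_{Z\epsilon}$, and then assemble via the uniform Slutsky/continuous-mapping results and the subsequence characterization of uniformity. The observation about the conditional quadratic variation being $S_{ZZ}\otimes\Sigma$ is correct. However, there is a genuine gap in the central technical step, the control of $S_{ZZ}$.

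Your claim that ``in the stationary regime $R_{n,0}$ the first term dominates, $S_{ZZ}=S_{XX}+o_p(1)$'' is false, and the proposed reuse of the partition $R_{n,0},R_{n,k},R_{n,d}$ from Section \ref{sec: mixed} is the wrong partition for this problem. That partition is calibrated to the thresholds $\log n/n$ and $n^{-\eta-\gamma}$, whereas the behaviour of the instrument is governed by comparison of $1-|\lambda_i|$ with $n^{-\beta}$. Since $\beta<1$, the region $R_{n,0}$ contains parameters with $\log n/n\le 1-|\lambda_1|\ll n^{-\beta}$; for those directions the instrument's mean reversion at rate $n^{-\beta}$ dominates the process's own mean reversion, so $S_{ZZ}$ is of order $n^{\beta}$ while $S_{XX}$ is of order $(1-|\lambda_1|)^{-1}\gg n^{\beta}$, and $S_{ZZ}\ne S_{XX}+o_p(1)$ under any common normalization. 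There is moreover a genuinely intermediate regime $1-|\lambda_i|\asymp n^{-\beta}$ in which the limit of $n^{-\beta}S_{ZZ}$ is neither $\tfrac12\Sigma$ nor the stationary covariance but the matrix $\tfrac12(I+K)^{-1}(2K\Omega K^T+\Sigma)(I+K)^{-T}$ depending on the localizing constants $\kappa_i=\lim n^{\beta}(1-\Gamma_{n,ii})$; your two-regime description misses this case entirely. The paper resolves this with a dedicated subsequence-extraction lemma (Lemma \ref{lem: ivx_seq}) that classifies eigenvalues relative to $n^{-\beta}$ into three blocks, a block-dependent normalization $D_n$ (equal to $n^{-\beta}$ on the near-unity blocks and $1-|\Gamma_{n,ii}|$ on the remaining block), and a lengthy computation of all diagonal and off-diagonal block limits of $D_n^{1/2}S_{ZZ}D_n^{1/2}$ (Lemma \ref{lem: ivx_asym}). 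Relatedly, the random-norming problem you flag at the end ($S_{ZZ}^{-1/2}$ being $\mathcal F_n$- rather than $\mathcal F_{t-1}$-measurable, so Theorem \ref{thm: mda_clt} does not apply to your proposed summands) is resolved precisely by replacing $S_{ZZ}$ with the deterministic limit $\Sigma_{Z,n}$ produced by that law of large numbers; since that limit is the part your sketch gets wrong, the CLT step is not actually secured.
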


The proof of Theorem \ref{thm: ivx} that we present here is conceptually different from the proofs presented so far. We rely on the theory developed in \cite{magdalinos2020econometric, phillips2009econometric}, but since we do not require that all roots approach unity at the same rate, there are some extra difficulties that need to be dealt with. In particular, we need to employ a different normalization in obtaining the asymptotics of \(S_{ZZ}\) and \(S_{\epsilon Z}\). Furthermore, Theorem \ref{thm: ivx} shows that the suggested IVX approach is truly uniformly valid (at least over the suggested parameter space, \(\Theta\)). The first lemma is of a technical nature.

\begin{lemma} \label{lem: ivx_seq}
    Let \((\theta_n)_{n\in\mathbb{N}}\subset\Theta\) satisfy Assumptions \ref{ass: M} and \ref{ass: U} with \(F_{\theta_n}=I\) and \(\beta\in(0, 1)\). Then, there exist \(0\le r\le d\), \((k_n)_{n\in\mathbb{N}}\subset \mathbb{N}\) strictly increasing, and \((\tilde{\theta})_{n\in\mathbb{N}}\subset\Theta\) such that
    \begin{enumerate}[label=(\roman*)]
        \item \(\theta_{k_n} = \tilde{\theta}_{k_n}, \; \forall n\in\mathbb{N}\), \label{lem: ivx_seq_i}
        \item \(n^\beta (1-\tilde{\Gamma}_{n, ii})\rightarrow \kappa_i\in\mathbb{C}, \; |\kappa_i|\in[0,1], \; \text{ for } 1\le i \le r\), \label{lem: ivx_seq_ii}
        \item \(n^{-\beta} (1-\tilde{\Gamma}_{n, ii})^{-1}\rightarrow \kappa_i\in\mathbb{C}, \; |\kappa_i|\in[0,1], \; \text{ for } r+1\le i \le d\), \label{lem: ivx_seq_iii}
        \item \(\tilde{\theta}_n\rightarrow\theta\in\Theta\), \label{lem: ivx_seq_iv}
    \end{enumerate}
    where \(\tilde{\theta}_n = (\tilde{\Gamma}_n, \tilde{\Sigma}_n, \cdot)\) and all limits are taken as \(n\rightarrow \infty\).
\end{lemma}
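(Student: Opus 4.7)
The plan is a two-step extraction-and-interpolation argument. The key observation is that, for each coordinate $i$, the quantities $a_{n,i} = n^\beta(1-\Gamma_{\theta_n,ii})$ and $b_{n,i} = n^{-\beta}(1-\Gamma_{\theta_n,ii})^{-1}$ are reciprocals, so $|a_{n,i}||b_{n,i}| = 1$ and at least one of them has magnitude at most one for every $n$. This reciprocity is what makes the dichotomy between (ii) and (iii) natural and forces $|\kappa_i| \in [0,1]$.

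First I would extract a subsequence along which all relevant quantities converge. For each coordinate $i$, pigeonhole shows at least one of the sets $\{n : |a_{n,i}| \le 1\}$ and $\{n : |b_{n,i}| \le 1\}$ is infinite; iterating this across $i = 1, \ldots, d$ yields a strictly increasing subsequence $(k_n) \subset \mathbb{N}$ and a partition of the coordinate set into subsets $I_{\mathrm{II}}$ and $I_{\mathrm{III}}$ with $a_{k_n, i} \to \kappa_i$, $|\kappa_i| \le 1$, for $i \in I_{\mathrm{II}}$, and $b_{k_n, i} \to \kappa_i$, $|\kappa_i| \le 1$, for $i \in I_{\mathrm{III}}$. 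Further extractions yield convergence of $\Sigma_{\theta_{k_n}}$ (bounded by Assumption U.2) and of the $\{0,1\}$-valued super-diagonal entries of $\Gamma_{\theta_{k_n}}$. The collected limits define a candidate $\theta \in \Theta$ for (iv).

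Having extracted, I construct $\tilde{\theta}_n$ for every $n$ by setting $\tilde{\theta}_{k_n} = \theta_{k_n}$, giving (i), and for $n$ outside the range of $(k_m)$ by defining the diagonal of $\tilde{\Gamma}_n$ as $1 - \kappa_i / n^\beta$ for $i \in I_{\mathrm{II}}$, as $1 - (\kappa_i n^\beta)^{-1}$ for $i \in I_{\mathrm{III}}$ with $\kappa_i \ne 0$, and as the subsequential limit of $\Gamma_{\theta_{k_n}, ii}$ when $\kappa_i = 0$ in case (iii); the super-diagonal entries of $\tilde{\Gamma}_n$ and the matrix $\tilde{\Sigma}_n$ are set to their subsequential limits. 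The convergences (ii) and (iii) hold on the subsequence $\{k_n\}$ by construction of $(k_n)$ and off the subsequence by direct computation, and (iv) follows coordinatewise.

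The main obstacle is showing that the partition has the ordered form $I_{\mathrm{II}} = \{1, \ldots, r\}$ and $I_{\mathrm{III}} = \{r+1, \ldots, d\}$ demanded by the statement, since Assumption U.4 forces $\Gamma_{\theta_n}$ to be Jordan-like with diagonal entries in non-increasing magnitude order. Suppose $j \in I_{\mathrm{II}}$, $i \in I_{\mathrm{III}}$ with $i < j$. From $|a_{k_n, j}| \le 1$ and $|b_{k_n, i}| \le 1$, we have $|1-\Gamma_{\theta_{k_n}, jj}| \le k_n^{-\beta} \le |1-\Gamma_{\theta_{k_n}, ii}|$. The eigenvalue region of Figure \ref{fig: eig} (Assumption U.3) forces eigenvalues near the unit circle to lie near $1$, so being closer to $1$ within this region is equivalent to having larger magnitude; hence $|\Gamma_{\theta_{k_n}, jj}| \ge |\Gamma_{\theta_{k_n}, ii}|$ eventually, strictly unless $|\kappa_i| = |\kappa_j| = 1$, in which boundary case the assignment to $I_{\mathrm{II}}$ or $I_{\mathrm{III}}$ can be swapped freely. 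The strict case contradicts the Jordan-like ordering, completing the argument. A secondary check is $\tilde{\theta}_n \in \Theta$ for all $n$, ensured for large $n$ by continuity of the constraints U.2--U.4 around $\theta \in \Theta$, and for small $n$ by substituting any default element of $\Theta$.
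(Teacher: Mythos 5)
Your overall strategy --- extract a subsequence along which the group membership stabilises and all bounded quantities converge, then interpolate off the subsequence --- is exactly the paper's (the paper extracts a constant subsequence of a threshold index \(r_n\in\{0,\dots,d\}\) and then applies Bolzano--Weierstrass, which is your pigeonhole step in different clothing). Two steps in your execution do not go through as written, however. First, your interpolant for a coordinate \(i\) in group (iii) with \(\kappa_i=0\) is the subsequential limit of \(\Gamma_{\theta_{k_n},ii}\); but \(\kappa_i=0\) in case (iii) only means \(|1-\Gamma_{\theta_{k_n},ii}|\gg k_n^{-\beta}\), which is compatible with \(\Gamma_{\theta_{k_n},ii}\to 1\) (e.g.\ \(1-\Gamma_{\theta_{k_n},ii}=k_n^{-\beta/2}\)), in which case your \(\tilde{\Gamma}_{n,ii}=1\) makes \((1-\tilde{\Gamma}_{n,ii})^{-1}\) undefined and (iii) fails off the subsequence. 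The paper avoids this by perturbing the limit explicitly, setting \(\tilde{\Gamma}_{n,ii}=\Gamma_{ii}-n^{\delta-\beta}\) for some \(\delta\in(0,\beta)\), so that \(n^{-\beta}(1-\tilde{\Gamma}_{n,ii})^{-1}=n^{-\delta}\to 0\).

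Second, and more seriously, your resolution of the ordering obstacle rests on the claim that within the region of Assumption \ref{ass: eig} ``being closer to \(1\) is equivalent to having larger magnitude,'' i.e.\ that \(|1-\lambda_j|\le|1-\lambda_i|\) forces \(|\lambda_j|\ge|\lambda_i|\). This is false: Assumption \ref{ass: eig} only gives \(1-|\lambda|\le|1-\lambda|\le r_\alpha(1-|\lambda|)/|\lambda|\), an equivalence up to the constant \(r_\alpha=(1-\alpha)(2-\alpha)/\alpha\), which is large for small \(\alpha\). Concretely, take \(\lambda_i\) complex with \(|\lambda_i|=1-n^{-\beta}/2\) and \(|1-\lambda_i|=2n^{-\beta}\) (admissible once \(r_\alpha\ge 4\)) alongside the real \(\lambda_j=1-n^{-\beta}/2\); then \(|\lambda_i|=|\lambda_j|\), so the Jordan-like ordering permits \(i<j\), yet \(n^\beta|1-\lambda_i|\equiv 2\) forces coordinate \(i\) into group (iii) while \(n^\beta(1-\lambda_j)\equiv 1/2\) forces coordinate \(j\) into group (ii). Your ``strictly unless \(|\kappa_i|=|\kappa_j|=1\)'' escape does not apply since the limiting moduli are \(2\) and \(1/2\), so the contradiction you want to derive from the magnitude ordering is not available. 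To be fair, the paper's own proof is silent on exactly this point --- it simply posits that the two groups form an initial segment and its complement --- so you correctly isolated the delicate step, but the argument you supply does not close it.
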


\begin{proof}
    Fix \((\theta_n)_{n\in\mathbb{N}}\subset\Theta\) and \(\beta\in(0, 1)\). For each \(n\in\mathbb{N}\), let \(0\le r_n\le d\) be such that \(|n^\beta(1-\Gamma_{n, ii})|\le 1\) for \(1\le i\le r_n\) and \(|n^{-\beta}(1-\Gamma_{n, ii})^{-1}|\le 1\) otherwise. Then, \((r_n)_{n\in\mathbb{N}}\) is a sequence in \(\{0, 1,..., d\}\) so that, by compactness, it has a convergent sub-sequence. In other words, there exists a sub-sequence, \((\theta_{n_k})_{k\in\mathbb{N}}\), and \(0\le r\le d\) such that \(|(n_k)^\beta (1 - \Gamma_{n_k, ii})| \le 1\) for \(1\le i\le r\) and \(|(n_k)^{-\beta} (1 - \Gamma_{n_k, ii})^{-1}| \le 1\) otherwise. By Bolzano-Weierstrass, we may assume without loss of generality (passing to another sub-sequence if necessary) that there exists \(\kappa\in \mathbb{C}^d\) with \(|\kappa_i|\le 1\) and such that
    \begin{align*}
        \left(n_k\right)^\beta\left(1 - \Gamma_{n_k, ii}\right)\rightarrow\kappa_i, \; &\text{ for } 1\le i \le r \\
        \left(n_k\right)^{-\beta}\left(1 - \Gamma_{n_k, ii}\right)^{-1}\rightarrow\kappa_i, \; &\text{ for } r+1\le i \le d
    \end{align*}
    for \(k\rightarrow\infty\). By another compactness argument, we can furthermore choose the sub-sequence such that \(\theta_{n_k}\rightarrow \theta = (\Gamma, \Sigma, c)\in\Theta\). Now, take some \(\delta\in(0, \beta)\), let \(0\le r_1\le r_2\le d\) be such that \(|\kappa_i|>0\) for \(r_1< i \le r_2\) and \(\kappa_i = 0\) otherwise, and define the diagonal matrix \(C_n\in\mathbb{C}^{d\times d}\) by
    \[
    C_{n, ii} = \begin{cases}
        n^{-\delta},  & \text{ if } i\le r_1, \\
        \kappa_i,       & \text{ if } r_1< i \le r, \\
        \kappa_i^{-1},   & \text{ if } r < i \le r_2, \\
        n^{\delta},    & \text{ otherwise}.
    \end{cases}
    \]
    By Assumptions \ref{ass: eig} and \ref{ass: jord}, we must have \((\Gamma_{i,j})_{1\le i, j\le r_2} = I_{r_2}\) so we find that \(\Gamma_n' = \Gamma - n^{-\beta}C_n\) satisfies \ref{lem: ivx_seq_ii} and \ref{lem: ivx_seq_iii} with \(\Gamma_n'\rightarrow \Gamma\) for \(n\rightarrow \infty\). Finally, let \(\tilde{\Gamma}_n = \Gamma_{n_k}\) if \(n=n_k\) for some \(k\in\mathbb{N}\) and \(\tilde{\Gamma}_n = \Gamma_n'\) otherwise and \(\tilde{\Sigma}_n = \Sigma_{n_k}\) and \(\tilde{c}_n = c_{n_k}\) for \(n_k\le n < n_{k+1}\). Then, \(\tilde{\theta}_n=(\tilde{\Gamma}_n, \tilde{\Sigma}_n, \tilde{c}_n)\) satisfies all the conditions.
\end{proof}

Sequences of parameters like \(\tilde{\theta}_n\) in the above Lemma fit nicely into the framework of \cite{magdalinos2020econometric, phillips2009econometric}. We can adapt their results to this more general setup. Fix some \(\beta\in(0, 1)\) and consider a sequence \((\theta_n)_{n\in\mathbb{N}}\subset\Theta\) such that conditions \ref{lem: ivx_seq_ii} and \ref{lem: ivx_seq_iii} are satisfied for some \(0\le r\le d\) and \(\kappa\in\mathbb{C}^d\) with \(|\kappa_i|\le 1\). For such a sequence, we can define the integers \(0\le r_1\le r_2\le d\) as in the proof above along with the diagonal matrices \(D_n\in \mathbb{C}^{d\times d}\) given by \(D_{n, ii} = n^{-\beta}\) for \(1\le i\le r_2\) and \(D_{n, ii} = (1 - |\Gamma_{n, ii}|)\) otherwise. This normalization is sufficiently flexible to ensure convergence of the relevant sample covariance matrices.

\begin{lemma} \label{lem: ivx_asym}
    Let \(\beta\in(\frac{1}{2}, 1)\) and \((\theta_n)_{n\in\mathbb{N}}\subset\Theta\) be a sequence of parameters satisfying Assumptions \ref{ass: M} and \ref{ass: U} with \(F_{\theta_n} = I\) as well as \ref{lem: ivx_seq_ii}, \ref{lem: ivx_seq_iii}, and \ref{lem: ivx_seq_iv} of Lemma \ref{lem: ivx_seq} for some \(0\le r \le d\), \(\kappa\in\mathbb{C}^d\) with \(|\kappa_i|\le 1\), and \(\theta\in\Theta\). For \((D_n)_{n\in\mathbb{N}}\) as defined above and \(\textnormal{vec}(V)\sim \mathcal{N}(0, I)\), there exists a sequence of positive definite matrices \((\Sigma_{Z, n})_{n\in\mathbb{N}}\) such that
    \[
    \limsup_{n\rightarrow\infty} \left\{\sigma_{min}\left(\Sigma_{Z, n}\right)^{-1} + \sigma_{max}\left(\Sigma_{Z, n}\right)\right\} < \infty
    \]
    and the following holds for any \(\epsilon > 0\)
    \begin{equation} \label{eq: ivx_zz}
        \lim_{n\rightarrow\infty}\mathbb{P}\left(\left|\left|D_n^{\frac{1}{2}}S_{ZZ}D_n^{\frac{1}{2}} - \Sigma_{Z, n}\right|\right| > \epsilon\right) = 0,
    \end{equation}
    \begin{equation} \label{eq: ivx_ze}
        \lim_{n\rightarrow\infty}d_{BL}\left(\sqrt{n}\Sigma_{Z,n}^{-\frac{1}{2}}D_n^{\frac{1}{2}}S_{Z\epsilon}\Sigma^{-\frac{1}{2}}, V\right) = 0.
    \end{equation}
\end{lemma}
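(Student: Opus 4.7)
The plan is to adapt the strategy of \cite{phillips2009econometric, magdalinos2020econometric}, generalizing it to accommodate several persistence regimes simultaneously. First, by Assumption \ref{ass: jord} and the ordering of eigenvalues, all super-diagonal entries of $\Gamma_n$ connecting positions with $|\Gamma_{n,ii}|>1-\alpha$ are zero; combined with $\Gamma_{n,ii}\to 1$ for $i\le r_2$, this forces $\Gamma_n$ to be block diagonal for $n$ large enough, with a persistent (diagonal) $r_2\times r_2$ upper-left block and a stationary Jordan-like $(d-r_2)\times(d-r_2)$ lower-right block. This allows me to analyse the two blocks separately. The natural candidate for $\Sigma_{Z,n}$ is a block-diagonal approximation of $D_n^{\frac{1}{2}}\mathbb{E}[S_{ZZ}]D_n^{\frac{1}{2}}$, whose persistent block tends to $\Sigma^{(11)}/2$ via the mildly integrated theory and whose stationary block has the classical stationary-VAR long-run-variance limit; uniform invertibility will follow from these two leading-order computations.

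For the covariance identity \eqref{eq: ivx_zz} on the persistent block, I decompose $Z_t=\tilde Z_t+R_t$ with
\[
\tilde Z_t=\sum_{s=1}^t(1-n^{-\beta})^{t-s}\epsilon_s,\qquad R_t=\sum_{s=1}^t(1-n^{-\beta})^{t-s}(\Gamma_n-I)X_{s-1}.
\]
Restricted to the persistent coordinates, $\tilde Z_t$ is the canonical mildly integrated instrument of \cite{phillips2009econometric}: a direct geometric-series computation gives $\mathbb{E}[\tilde Z_{t,i}\tilde Z_{t,j}]\approx (n^{\beta}/2)\Sigma^{(11)}_{ij}$, hence $n^{-\beta-1}\sum_t\mathbb{E}[\tilde Z_{t-1}\tilde Z_{t-1}^T]\to\Sigma^{(11)}/2$, and concentration of the empirical Gram matrix around its mean follows from a variance-based weak law as in Lemma \ref{lem: cov_wlln}. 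The remainder $R_t$ is controlled via the Abel-summation identity $Z_t=X_t-n^{-\beta}\sum_{s=1}^{t-1}(1-n^{-\beta})^{t-1-s}X_s$, which expresses $Z_t$ as a low-order perturbation of $X_t$; since $(\Gamma_n-I)$ restricted to the persistent block is diagonal with entries of order $O(n^{-\beta})$ (and vanishing in the pure unit-root case), one obtains $\|D_n^{1/2}R_t\|=o_p(1)$ in the relevant operator sense. For the stationary block the arguments of Section \ref{sec: stat} apply with minor modification to give the limit for $(1-|\Gamma_{n,ii}|)^{1/2}(1-|\Gamma_{n,jj}|)^{1/2}(S_{ZZ})_{ij}/n$, and cross-block off-diagonal terms are handled by Cauchy-Schwarz together with the mismatch in scaling rates between the two blocks.

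For the CLT \eqref{eq: ivx_ze}, note that $\sqrt{n}D_n^{1/2}S_{Z\epsilon}=n^{-1/2}\sum_{t=1}^n D_n^{1/2}Z_{t-1}\epsilon_t^T$ and that $Z_{t-1}$ is $\mathcal{F}_{t-1}$-measurable, so the vectorized summands
\[
e_{t,n}=n^{-1/2}\,\Sigma^{-1/2}\epsilon_t\otimes \Sigma_{Z,n}^{-1/2}D_n^{1/2}Z_{t-1}
\]
form a martingale difference array. The conditional covariance condition of Theorem \ref{thm: mda_clt} reads $\sum_t\mathbb{E}[e_{t,n}e_{t,n}^T|\mathcal{F}_{t-1}]=I\otimes\bigl(\Sigma_{Z,n}^{-1/2}D_n^{1/2}S_{ZZ}D_n^{1/2}\Sigma_{Z,n}^{-1/2}\bigr)\to_p I$ as a consequence of \eqref{eq: ivx_zz}, while the Lindeberg condition is checked from Assumption \ref{ass: mom} plus a uniform bound on $\|D_n^{1/2}Z_{t-1}\|^2$ of exactly the type obtained at the end of the proof of Theorem \ref{thm: stat_asym}. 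The main technical obstacle is keeping $\sigma_{min}(\Sigma_{Z,n})$ bounded away from zero: this reduces to showing that the leading-order term in each of the persistent and stationary blocks is non-degenerate, which hinges on the bound $|\kappa_i|\le 1$ from Lemma \ref{lem: ivx_seq} (preventing any block from collapsing under the chosen normalization) combined with Assumption \ref{ass: sig} giving full-rank $\Sigma$.
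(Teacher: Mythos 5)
Your overall strategy (decompose $Z_t=\tilde Z_t+(\Gamma_n-I)\psi_t$, treat persistent and stationary coordinates separately, MDA CLT for \eqref{eq: ivx_ze}) is the right skeleton and matches the paper's, but there is a genuine gap in the treatment of the persistent block. You merge all coordinates $i\le r_2$ into a single block and claim that the remainder $(\Gamma_n-I)\psi_t$ is negligible there, so that the limit of $D_n^{\frac{1}{2}}S_{ZZ}D_n^{\frac{1}{2}}$ on that block is $\Sigma^{(11)}/2$. This is only correct for the coordinates where $n^{\beta}(1-\Gamma_{n,ii})\to\kappa_i=0$, i.e., where the root approaches unity \emph{strictly faster} than the instrument's rate $n^{-\beta}$. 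The hypotheses of the lemma explicitly allow $\kappa_i\neq 0$ (roots approaching unity at \emph{exactly} rate $n^{-\beta}$), and for those coordinates the correction is of the same order as $\tilde Z_t$: heuristically $\|\psi_t\|\asymp n^{\beta}\sup_s\|X_s\|\asymp n^{\beta}\sqrt{n}$, so $\|D_n^{\frac{1}{2}}(\Gamma_n-I)\psi_t\|\asymp |\kappa_i|\,n^{-\beta/2}\cdot n^{-\beta}\cdot n^{\beta}\sqrt{n}=|\kappa_i|\,n^{(1-\beta)/2}$, which does not vanish for $\beta<1$. This is precisely why the paper works with a \emph{three}-block partition ($r_1$, $r_2-r_1$, $d-r_2$): the middle block requires a separate computation via the recursions $Z_t=(1-n^{-\beta})Z_{t-1}+\Delta X_t$ and $\Delta X_t=(\Gamma_n-I)X_{t-1}+\epsilon_t$, and its limit is
\[
\tfrac{1}{2}(I+K)^{-1}\left(2K\Omega^{22}K^{T}+\Sigma^{22}\right)(I+K)^{-T},\qquad \Omega^{22}=\int_0^{\infty}e^{-sK}\Sigma^{22}e^{-sK^{T}}\,ds,
\]
which differs from $\Sigma^{22}/2$ whenever $K\neq 0$. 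Moreover, the cross block between the fast coordinates and the exactly-rate-$n^{-\beta}$ coordinates does \emph{not} vanish (it converges to $\tfrac{1}{2}\Sigma^{12}$), so your claim that all cross-block terms are killed by rate mismatch also fails once the finer partition is in place; only the blocks coupling the stationary coordinates to the rest vanish. Your candidate $\Sigma_{Z,n}$ is therefore wrong on the $(r_2-r_1)$-dimensional middle block, and the invertibility argument has to be run for the corrected limit (where it follows because $\operatorname{Re}(\bar K_{ii})\in[0,1]$ and $\Sigma$ is positive definite). The decomposition of $\Gamma_n$ into a diagonal persistent block and a Jordan-like stationary block, the $\tilde Z$ geometric-series computation, the treatment of the purely stationary block, the negligibility of $(\Gamma_n-I)\psi_t$ in $S_{Z\epsilon}$ (as opposed to $S_{ZZ}$), and the MDA CLT step are all fine.
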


The following result is useful for the proof of Lemma \ref{lem: ivx_asym}. With a slight abuse of notation, for any \(\theta\in\Theta\), let \((I-\Gamma)^{\frac{1}{2}}\) be the diagonal matrix given by the principal square root of the diagonal of \(I-\Gamma\).

\begin{lemma} \label{lem: h_lam}
    Assume that Assumptions \ref{ass: M} and \ref{ass: U} hold with \(F_\theta = I\). Then,
    \begin{equation} \label{eq: h_lam_bound}
        \sup_{\theta\in\Theta}\sup_{t\ge 1}\left|\left|\mathbb{E}\left((I-\Gamma)^{\frac{1}{2}}X_{t, \theta}X_{t, \theta}^T(I-\Gamma)^{\frac{1}{2}}\right)\right|\right| < \infty
    \end{equation}
    and, furthermore,
    \begin{equation} \label{eq: h_lam_lim}
        \lim_{n\rightarrow\infty}\sup_{\theta\in R_{n, 0}}\left|\left|(I-\Gamma)^{\frac{1}{2}}H(I - \Gamma)^{\frac{1}{2}} - \Sigma_X\right|\right| = 0
    \end{equation}
    where \(\textnormal{vec}(\Sigma_X)=(I-\Gamma)^{\frac{1}{2}}\otimes(I-\Gamma)^{\frac{T}{2}}(I - \Gamma\otimes\Gamma^T)^{-1}\textnormal{vec}(\Sigma)\) with
    \[
    \limsup_{n\rightarrow\infty}\sup_{\theta\in R_{n, 0}}\left\{\sigma_{min}\left(\Sigma_X\right)^{-1} + \sigma_{max}\left(\Sigma_X\right)\right\} < \infty.
    \]
\end{lemma}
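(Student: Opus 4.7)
The plan is to exploit the Lyapunov structure of $M_t := \mathbb{E}(X_{t,\theta}X_{t,\theta}^T)$ together with a telescoping identity and the Jordan-like block decomposition from Assumption~\ref{ass: jord}. Write $D := (I-\Gamma)^{1/2}$ for the diagonal matrix in the statement. Since $X_{0,\theta}=0$ and $X_{t,\theta}=\Gamma X_{t-1,\theta}+\epsilon_{t,\theta}$, the recursion $M_t=\Gamma M_{t-1}\Gamma^T+\Sigma$ holds with $M_0=0$. For $\theta\in R_{n,0}$ every eigenvalue of $\Gamma$ has magnitude at most $1-\log(n)/n<1$, so $I-\Gamma\otimes\Gamma$ is invertible and the stationary covariance $\Sigma_\infty$ solving $\Sigma_\infty=\Gamma\Sigma_\infty\Gamma^T+\Sigma$ exists, with $\Sigma_X = D\Sigma_\infty D^T$.

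For \eqref{eq: h_lam_lim} the key observation will be a telescoping identity: summing the Lyapunov recursion over $t=1,\dots,n$ and vectorising gives
\[
\textnormal{vec}(H) = \textnormal{vec}(\Sigma_\infty) - \tfrac{1}{n}(I-\Gamma\otimes\Gamma)^{-1}\textnormal{vec}(M_n),
\]
so that
\[
DHD^T - \Sigma_X = -\tfrac{1}{n}\,\textnormal{mat}\Big((D\otimes D)(I-\Gamma\otimes\Gamma)^{-1}\textnormal{vec}(M_n)\Big).
\]
It therefore suffices to show the right-hand side is $o(1)$ uniformly on $R_{n,0}$. Using Assumption~\ref{ass: jord} I would block-decompose $\Gamma = \mathrm{blkdiag}(\Gamma_1,\Gamma_2)$ with $\Gamma_1$ diagonal (eigenvalues of magnitude $>1-\alpha$) and $\Gamma_2$ Jordan-like (eigenvalues of magnitude $\le 1-\alpha$). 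The contribution from $\Gamma_2$ is easy: $\|\Gamma_2^s\|$ decays geometrically and the induced series are absolutely convergent with a uniform bound. For the $\Gamma_1$ block the essential entry-wise estimate is $\sqrt{(1-|\lambda_i|)(1-|\lambda_j|)}/|1-\lambda_i\bar\lambda_j| \le 1$, valid because Assumption~\ref{ass: eig} forces eigenvalues near the unit circle to cluster around $1$. Combined with $|1-\lambda_i\bar\lambda_j|^{-1}\le n/\log(n)$ and $\|M_n\| = O(n/\log n)$ from part~\ref{lem: norm_asym_e} of Lemma~\ref{lem: norm_asym}, this will deliver an $O(1/\log n)$ residual, proving \eqref{eq: h_lam_lim}. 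For \eqref{eq: h_lam_bound} I would use the analogous closed form $\textnormal{vec}(M_t)=(I-(\Gamma\otimes\Gamma)^t)(I-\Gamma\otimes\Gamma)^{-1}\textnormal{vec}(\Sigma)$ and apply the same entry-wise bound; the factor $(\Gamma\otimes\Gamma)^t$ is harmless since its entries have modulus at most $1$.

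The hard part will be the uniform lower bound on $\sigma_{\min}(\Sigma_X)$: the naive estimate $\sigma_{\min}(\Sigma_X)\ge\sigma_{\min}(D)^2\sigma_{\min}(\Sigma_\infty)$ is useless because $\sigma_{\min}(D)^2$ can be as small as $\log(n)/n$, so the cancellation between the smallness of $D$ and the blow-up of $\Sigma_\infty$ must be tracked entry-wise. My plan is to write $\Sigma_X$ in the diagonal case as the Hadamard product $T\circ\Sigma$ with the positive-semidefinite Mercer kernel
\[
T_{ij} = \sum_{k\ge 0}(1-\lambda_i)^{1/2}\lambda_i^k\,(1-\lambda_j)^{1/2}\bar\lambda_j^k,
\]
whose diagonal entries $T_{ii}=1/(1+\lambda_i)$ are uniformly bounded below (by $1/2$ in the worst case, or by $1/(2-\alpha)$ under Assumption~\ref{ass: eig}). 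Combined with the uniform invertibility of $\Sigma$ from Assumption~\ref{ass: sig}, a Schur-product-type estimate on test vectors then yields $\sigma_{\min}(\Sigma_X)\ge c>0$. Extending this to Jordan-like $\Gamma$ is done block-wise, using that $D$ is uniformly invertible on the $\Gamma_2$ block and that the off-diagonal blocks of $\Sigma_X$ vanish in the limit, so positive definiteness is not compromised. The upper bound $\sigma_{\max}(\Sigma_X)=O(1)$ is a direct byproduct of the same entry-wise estimates used in the previous paragraph.
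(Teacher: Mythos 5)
Most of your plan is sound and takes a genuinely different route from the paper. For \eqref{eq: h_lam_lim} the paper does not use your exact telescoping identity; it instead invokes Lemma \ref{lem: norm_exp} to replace $H$ by $\mathbb{E}(X_{n-1,\theta}X_{n-1,\theta}^T)=\sum_{s=0}^{n-2}\Gamma^s\Sigma(\Gamma^s)^T$ and then bounds the tail $\sum_{s\ge n-1}(I-\Gamma)^{1/2}\Gamma^s\Sigma(\Gamma^s)^T(I-\Gamma)^{T/2}$. Your Lyapunov/telescoping identity $\mathrm{vec}(H)=\mathrm{vec}(\Sigma_\infty)-\tfrac1n(I-\Gamma\otimes\Gamma)^{-1}\mathrm{vec}(M_n)$ is correct and self-contained, and combined with $\|M_n\|=O(n/\log n)$ and the weight bound $\sqrt{(1-|\lambda_i|)(1-|\lambda_j|)}/|1-\lambda_i\bar\lambda_j|\le 1$ (upgraded to $|1-\lambda_i|^{1/2}|1-\lambda_j|^{1/2}$ via Assumption \ref{ass: eig}) it does deliver the $O(1/\log n)$ residual; the bounds \eqref{eq: h_lam_bound} and $\sigma_{\max}(\Sigma_X)=O(1)$ follow by essentially the same entrywise estimate the paper uses. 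Your Gram/Hadamard argument for the diagonal block, $\Sigma_X=T\circ\Sigma$ with $T_{ii}=|1-\lambda_i|/(1-|\lambda_i|^2)\ge 1/2$ and a Schur-product lower bound $\lambda_{\min}(T\circ\Sigma)\ge(\min_iT_{ii})\,\lambda_{\min}(\Sigma)$, is cleaner than the paper's chain of $\sigma_{\min}$ inequalities (which, as written, interchanges a minimum over eigenvalues with the sum over $t$ in a way that needs exactly this kind of repair).

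The genuine gap is in your extension of the lower bound on $\sigma_{\min}(\Sigma_X)$ to non-diagonal Jordan-like $\Gamma$. You assert that the off-diagonal blocks of $\Sigma_X$ vanish in the limit; they do not, uniformly over $R_{n,0}$. A block-$1$ eigenvalue only satisfies $|\lambda|>1-\alpha$ and need not approach $1$: take $d=2$, $\lambda_1=1-\alpha/2$ fixed, $\lambda_2=0$, $\Sigma_{12}=\rho\neq 0$; then $(\Sigma_X)_{12}=(\alpha/2)^{1/2}\rho$ for every $n$. Since the lower-right block of $\Sigma_X$ is itself only bounded below by a constant of order $\alpha\,\sigma_{\min}(\Sigma)$ while the off-diagonal coupling is of order $\sqrt{\alpha}\,\sigma_{\max}(\Sigma)$, a naive two-block positive-definiteness argument cannot close without tracking constants that depend on the condition number of $\Sigma$, so "positive definiteness is not compromised" does not follow as stated. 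The fix is to pull $\sigma_{\min}(\Sigma)$ out \emph{before} splitting into blocks: for any unit vector $v$,
\[
v^*\Sigma_Xv=\sum_{t\ge 0}\bigl\|\Sigma^{1/2}(\Gamma^t)^*(I-\Gamma)^{*/2}v\bigr\|^2\ \ge\ \sigma_{\min}(\Sigma)\,v^*\Bigl(\sum_{t\ge 0}(I-\Gamma)^{1/2}\Gamma^t(\Gamma^t)^*(I-\Gamma)^{*/2}\Bigr)v,
\]
and the matrix $\sum_t(I-\Gamma)^{1/2}\Gamma^t(\Gamma^t)^*(I-\Gamma)^{*/2}$ \emph{is} exactly block diagonal (because $\Gamma$ is, under Assumption \ref{ass: jord}), with the diagonal block bounded below by $1/2$ entrywise on its diagonal (your Gram argument) and the Jordan block bounded below by $\alpha I$ from the $t=0$ term. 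With that substitution your proof goes through.
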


\begin{proof}
    For any \(\theta\in \Theta\) we have
    \begin{align*}
        \left|\left|\mathbb{E}\left((I-\Gamma)^{\frac{1}{2}}X_{t, \theta}X_{t, \theta}^T(I-\Gamma)^{\frac{T}{2}}\right)\right|\right| 
            &= \left|\left|(I-\Gamma)^{\frac{1}{2}}\sum_{s=0}^{t-1}\Gamma^s\Sigma\left(\Gamma^s\right)^T(I-\Gamma)^{\frac{T}{2}}\right|\right| \\
            &\le ||\Sigma||\sum_{s=0}^{t-1}\left|\left|(I-\Gamma)^{\frac{1}{2}}\Gamma^{s}\right|\right|^2
    \end{align*}
    Due to the block diagonal structure of \(\Gamma\) (Assumption \ref{ass: jord}) and Assumption \ref{ass: sig}, there exists some generic constant \(c_0>0\) such the last term in above inequality is bounded by
    \[
    c_0\left(\sup_{\theta\in\Theta\;:\;|\lambda_{N}|>1-\alpha}\sum_{s=0}^{t-1}\left\|(I-\Gamma)^{\frac{1}{2}}\Gamma^{s}\right\|^2 + \sup_{\theta\in\Theta\;:\;|\lambda_{1}|\le1-\alpha}\sum_{s=0}^{t-1}\left\|(I-\Gamma)^{\frac{1}{2}}\Gamma^{s}\right\|^2\right).
    \]
    By equation \eqref{eq: t_la}, the second term is converges for \(t\rightarrow\infty\). For the first term, we use the fact that, for any \(\theta\in\Theta\), the condition \(|\lambda_N|>1-\alpha\) implies that \(\Gamma\) is diagonal and, thus, 
    \[
    \sum_{s=0}^{t-1}\left\|(I-\Gamma)^{\frac{1}{2}}\Gamma^{s}\right\|^2 \le \sum_{i=1}^N\sum_{s=0}^{t-1}m_i|1-\lambda_i||\lambda_i|^s\le d^2r_\alpha \frac{1-|\lambda_i|^t}{|\lambda_i|}\le \frac{d^2 r_\alpha}{1-\alpha}
    \]
    where in the first inequality we used the fact that \(|\lambda_i|^{2s}\le |\lambda_i|^s\) since \(|\lambda_i|\le 1\) and in the second inequality we used Assumption \ref{ass: eig}, the fact that \(N, m_i\le d\) and that \(|\lambda_i|=1\) implies that \(\lambda_i = 1\) and therefore \(|1 - \lambda_i||\lambda_i|^s=0\) for all \(s=0, \dots, t-1\) in this case. This proves \eqref{eq: h_lam_bound}.

    For the proof of \eqref{eq: h_lam_lim}, simply note that Lemma \ref{lem: norm_exp} and \eqref{eq: h_lam_bound} imply that 
    \[
    \lim_{n\rightarrow\infty}\sup_{\theta\in R_{n, 0}}\left|\left|(I-\Gamma)^{\frac{1}{2}}\left(H - \mathbb{E}(X_{n-1,\theta}X_{n-1, \theta}^T)\right)(I - \Gamma)^{\frac{T}{2}}\right|\right| = 0
    \]
    and
    \begin{align*}
        &\sup_{\theta\in R_{n, 0}}\left|\left|(I-\Gamma)^{\frac{1}{2}}\mathbb{E}(X_{n-1,\theta}X_{n-1, \theta}^T)(I - \Gamma)^{\frac{T}{2}} - \Sigma_X\right|\right| \\
        =& \sup_{\theta\in R_{n, 0}}\left|\left|(I-\Gamma)^{\frac{1}{2}}\sum_{s=n-1}^{\infty}\Gamma^s\Sigma\left(\Gamma^s\right)^T(I-\Gamma)^{\frac{T}{2}}\right|\right| \rightarrow 0
    \end{align*}
    for \(n\rightarrow \infty\). It remains to check that \(\Sigma_X\) is uniformly bounded and invertible in the limit. Since \(\limsup_n \sup_{\theta\in R_{n, 0}}\sigma_{max}(\Sigma_X) < \infty\) follows immediately from \eqref{eq: h_lam_bound}, we only need to show the latter. For any \(\theta\in R_{n, 0}\) diagonal, we have
    \begin{align*}
        \sigma_{min}(\Sigma_X) &\ge \sigma_{min}(\Sigma)\sum_{t=0}^{\infty}\sigma_{min}\left((I-\Gamma)^{\frac{1}{2}}\Gamma^{t}\right)^2 \\
            &\ge \sigma_{min}(\Sigma)\sum_{t=0}^{\infty}\min_{1\le k\le d}(1 - |\lambda_{i_k}|)|\lambda_{i_k}|^{2t} \\
            &\ge \sigma_{min}(\Sigma)\sum_{t=0}^{\infty}\frac{\log n}{n}\left(1 - \frac{\log n}{n}\right)^{2t} \\
            &= \sigma_{min}(\Sigma)\frac{\log n}{n}\left(1 - \left(1 - \frac{\log n}{n}\right)^2\right)^{-1} \\
            &\ge \frac{\sigma_{min}(\Sigma)}{2}.
    \end{align*}
    If \(\Gamma\) is non-diagonal, the same bound holds since adding ones on the super-diagonal does not decrease the minimum singular value. Because \(\Sigma\) is uniformly invertible over \(\Theta\), the proof is then complete.
\end{proof}

\begin{proof}[Proof of Lemma \ref{lem: ivx_asym}]
    Throughout the proof we write matrices as \(3\times 3\) block matrices such that the top-left block is \(r_1\times r_1\), the middle block is \((r_2-r_1)\times (r_2-r_1)\), and the bottom-left block is \((d-r_2)\times(d-r_2)\). We use a superscript to denote the block index, e.g., \(S_{ZZ}^{13}\) denotes the top-right block of \(S_{ZZ}\). Furthermore, for a diagonal matrix \(A\) with complex values, we let \(A^{\frac{1}{2}}\) denote the diagonal matrix obtained by taking the principal square root of the diagonal of \(A\). Let \(\tilde{Z}_t = \sum_{s=1}^t (1-n^{-\beta})^{t-s}\epsilon_s\) and \(\psi_t = \sum_{s=1}^t (1-n^{-\beta})^{t-s}X_{s-1}\) so that
    \begin{equation} \label{eq: z_decomp}
        Z_t = \tilde{Z}_t + \left(\Gamma_n - I\right)\psi_t.
    \end{equation}
    Let \(\Lambda_n = (I-\Gamma_n)^{\frac{1}{2}}\) be the diagonal matrix as defined in Lemma \ref{lem: h_lam} above. Then, since \(\Gamma_n\) and \(\Lambda_n\) commute, with \(c\) denoting some generic constant not depending on \(t\) or \(n\),
    \begin{equation} \label{eq: psi_ineq}
    \begin{split}
        \mathbb{E}\left|\left|\Lambda_n\psi_t\right|\right|^2
            &=\sum_{i,j=1}^t(1-n^{-\beta})^{2t-i-j}\tr\left(\Lambda_n\mathbb{E}\left(X_{i-1}X_{j-1}^T\right)\Lambda_n\right)\\
            &\le 2\sum_{1\le j\le i \le t}(1-n^{-\beta})^{2t-i-j}\left|\tr\left(\Gamma_n^{i-j}\Lambda_n\mathbb{E}\left(X_{i-1}X_{j-1}^T\right)\Lambda_n\right)\right| \\
            &\le 2c\sum_{i, j=1}^t (1-n^{-\beta})^{2t-i-j}\left\|\Gamma^{i - j}\Lambda_n\right\| \\
            &\le 2c\sum_{i=0}^{t-1}(1-n^{-\beta})^i\sum_{j=0}^{t-i-1}\left\|\left((1-n^{-\beta})\Gamma\right)^j\Lambda_n\right\|
    \end{split}
    \end{equation}
    where the second inequality follows from Lemma \ref{lem: h_lam} in the Appendix and the Cauchy-Schwartz inequality. This inequality yields a result equivalent to equation (40) in \cite{phillips2009econometric}. In particular, we deduce that \(\sup_{1\le t \le n}\mathbb{E}\left|\left|(\Gamma_n - I)\psi_t\right|\right|^2 = o(n)\) from which it follows that
    \begin{equation} \label{eq: s_pe}
        S_{Z\epsilon} = S_{\tilde{Z}\epsilon} + o_p(1).
    \end{equation}
    
    We first prove \eqref{eq: ivx_zz}. For ease of notation, we write \(S_n = D_n^{\frac{1}{2}}S_{ZZ}D_n^{\frac{1}{2}}.\) Since \(D_n^{11}=n^{-\beta} I_{r_1}\), essentially the same proof as that of Lemma 3.1.(iii) in \cite{phillips2009econometric} using \eqref{eq: psi_ineq} shows that
    \[
    S_n^{11} = n^{-\beta} S^{11}_{\tilde{Z}\tilde{Z}} + o_p(1) = \frac{1}{2}\Sigma^{11} + o_p(1),
    \]
    where the latter equality follows from Theorem \ref{thm: stat_asym} and the fact that \(n^{-\beta}\mathbb{E}(S_{\tilde{Z}\tilde{Z}}) \rightarrow \Sigma/2\) for \(n\rightarrow \infty\). Similarly, the proof of Lemma 3.5.(ii) in \cite{phillips2009econometric} can be adapted to show that
    \[
    S_n^{33} = \left(D_{n}^{33}\right)^{\frac{1}{2}} S^{33}_{XX} \left(D_{n}^{33}\right)^{\frac{1}{2}} + o_p(1) =  \Sigma^{33}_{X, n} + o_p(1),
    \]
    where the latter equality follows from Theorem \ref{thm: stat_asym} and Lemma \ref{lem: h_lam} in the Appendix and \(\Sigma^{33}_{X, n}\) is defined as in Lemma \ref{lem: h_lam} in the Appendix but emphasizing the dependence on \(n\). For the middle block, using the recursive relations \(Z_t = (1-n^{-\beta})Z_{t-1} + \Delta X_t\) and \(\Delta X_t = (\Gamma_n - I)X_{t-1} + \epsilon_t\), we can write
    \[
    \left(1 - \left(1-n^{-\beta}\right)^2\right)S^{22}_{ZZ} = S^{22}_{\Delta X Z} + S^{22}_{Z \Delta X} + S^{22}_{\Delta X \Delta X} + o_p(1).
    \]
    It follows from Theorem \ref{thm: stat_asym}, that \(S^{22}_{\Delta X \Delta X} = \Sigma^{22} + o_p(1)\). For the other two terms, we use \eqref{eq: s_pe} and write \(S^{22}_{Z\Delta X}= S^{22}_{\tilde{Z}\epsilon} + S^{22}_{ZX}(\Gamma_n^{22} - I)^T + o_p(1)\). Using the recursive relations and \eqref{eq: s_pe} once more yields
    \[
    \left(I - \left(1-n^{-\beta}\right)\Gamma^{22}_n\right)S^{22}_{XZ} = S^{22}_{X\epsilon} + S^{22}_{\epsilon\tilde{Z}} + S^{22}_{\epsilon\epsilon} + S^{22}_{XX}(\Gamma^{22}_n - I) + o_p(1)
    \]
    It follows from Theorem \ref{thm: stat_asym} that the first two terms tend to 0 in probability for \(n\rightarrow\infty\) and \(S^{22}_{\epsilon\epsilon} = \Sigma^{22} + o_p(1)\). If we define \(K\in\mathbb{C}^{(r_2 - r_1)\times(r_2 - r_1)}\) diagonal with \(K_{ii} = \kappa_i\) for \(i\le r_2 - r\) and \(K_{ii} = \kappa_i^{-1}\) otherwise, we get \(n^\beta\left(I - \Gamma^{22}_n\right) \rightarrow K\), \(n^\beta\Lambda_{n}^{22} \rightarrow K^{\frac{1}{2}}\), and \(n^\beta\left(I - (1-n^{-\beta})\Gamma_n^{22}\right) \rightarrow K + I\) for \(n\rightarrow\infty\). Lemma \ref{lem: h_lam} in the Appendix then yields
    \[
    \left(\Gamma_n^{22}-I\right)S_{XZ}^{22}=\left(K+I\right)^{-1}\left(K\Sigma^{22} + K^{\frac{1}{2}}\Sigma_{X, n}^{22}K^{\frac{1}{2}}\right) + o_p(1).
    \]
    and \((\Lambda^{22}_n)^{\frac{1}{2}}\otimes (\Lambda^{22}_n)^{\frac{T}{2}}(I-\Gamma_n^{22}\otimes (\Gamma_n^{22})^T)^{-1} \rightarrow (K\otimes K^T)^{\frac{1}{2}}(I\otimes K^T + K\otimes I)^{-1}\) for \(n\rightarrow \infty\) so that (noting that \(K_{ii}\) has strictly positive real part for all \(i\))
    \[
    \Sigma^{22}_{X, n}\rightarrow K^{\frac{1}{2}}\int_0^{\infty}e^{-sK}\Sigma^{22}e^{-sK^T} ds K^{\frac{T}{2}} = K^{\frac{1}{2}}\Omega^{22}K^{\frac{T}{2}}.
    \]
    Then, using the relation \(\Sigma^{22} - \Omega^{22}K =  K\Omega^{22}\), the limiting expression simplifies to
    \[
    (\Gamma_n^{22} - I)S_{XZ}^{22} = (K+I)^{-1}K^2\Sigma^{22}.
    \]
    Finally, since \(n^\beta(1-(1-n^{-\beta})^2) = 2 + o(1)\) and \(D_{n}^{22} = n^{-\beta}I_{r_2-r_1}\), we find 
    \begin{align*}
        S_n^{22} 
            &= \frac{1}{2}\left(\Sigma^{22} + \left(K+I\right)^{-1}K^2\Omega^{22} + \Omega^{22}(K^T)^2\left(K+I\right)^{-T}\right) + o_p(1) \\
            &= \frac{1}{2}\left((K+I)^{-1}K\Omega^{22} + \Omega^{22}K^T(K+I)^{-T}\right) + o_p(1) \\
            &= \frac{1}{2}(I+K)^{-1}\left(2K\Omega^{22}K^T + \Sigma\right)(I+K)^{-T} + o_p(1).
    \end{align*}
    We have yet to characterize the asymptotic behaviour of the off-diagonal blocks. First, note that by (23) in \cite{phillips2009econometric} and \eqref{eq: z_decomp}, we get
    \[
    S_{ZZ}^{32} - S_{XZ}^{32} = -n^{-\beta}\left(S_{\psi\psi}^{32}(\Gamma_n^{22} - I)^T - S_{\psi\tilde{Z}}^{32}\right)
    \]
    so that \eqref{eq: psi_ineq} and Theorem \ref{thm: stat_asym} yield \(S_n^{32} - \left(D_n^{33}\right)^{\frac{1}{2}} S_{XZ}^{32} \left(D_n^{22}\right)^{\frac{1}{2}} =  o_p(1).\) Similar to above, we have
    \[
    \left(I - (1-n^{-\beta})\Gamma_n^{33}\right)S_{XZ}^{32} = S^{32}_{XX}\left(\Gamma_n^{22} - I\right)^T + o_p(1).
    \]
    But then, because \(n^{-\beta}(I - \Gamma_n^{33})=o(1)\), we find that
    \[
    n^{-\frac{\beta}{2}}\left(\Lambda_n^{33}\right)^{\frac{1}{2}}\left(I - \left(1 - n^{-\beta}\right)\Gamma_n^{33}\right)^{-1} = o(1)
    \]
    and, by Lemma \ref{lem: h_lam} in the Appendix and Theorem \ref{thm: stat_asym}, \(S_{XX}^{32}(\Gamma_n^{22} - I)^T = o_p(1)\). Thus, \(S_n^{32} = o_p(1)\). A similar argument show that \(S_n^{31} = o_p(1)\) so that the only block left is \(S_n^{21}\). As a consequence of \eqref{eq: z_decomp}, we have
    \[
    n^{-\beta}\left|\left|S_{ZZ}^{12} - S_{\tilde{Z}Z}^{12}\right|\right| = n^{-\beta}\left|\left|(\Gamma_n^{11}-I)S_{\psi\psi}^{12}(\Gamma_n^{22}-I)^T + (\Gamma_n^{11}-I)S_{\psi\tilde{Z}}^{12}\right|\right|
    \]
    and arguments like the one employed in the proof of Lemma 3.1 in \cite{phillips2009econometric} in combination with \eqref{eq: psi_ineq} shows that the right hand side is \(o_p(1)\). Using the recursive relations for \(Z_t\), \(\tilde{Z}_t\), and \(\Delta X_t\) in combination with \eqref{eq: s_pe} and Theorem \ref{thm: stat_asym}, we have
    \begin{align*}
        \left(1-(1-n^{-\beta})^2\right)S_{\tilde{Z}Z}^{12}
            &=S^{12}_{\epsilon Z} + S_{\tilde{Z}\Delta X}^{12} + S_{\epsilon\Delta X}^{12} + o_p(1) \\
            &= S_{\tilde{Z}X}^{12}(\Gamma_n^{22} - I)^T + S^{12}_{\epsilon\epsilon} + o_p(1).
    \end{align*}
    An application of Lemma \ref{lem: h_lam} in the Appendix and Theorem \ref{thm: stat_asym} yields \(S_{\tilde{Z}X}^{12} = \Omega^{12}_n K^{\frac{T}{2}} + o_p(1)\) where
    \[
    \Omega^{12}_n = \Sigma^{12}\left(I - (1-n^{-\beta})\Gamma_n^{22}\right)^T n^{-\frac{\beta}{2}}\left(I - \Lambda_n^{22}\right)^{-\frac{T}{2}} \rightarrow \Sigma^{12}K^{\frac{T}{2}}\left(I + K\right)^{-T} 
    \]
    for \(n\rightarrow\infty\). In conclusion, since \(D_n^{22} = n^{-\beta}I_{r_2-r_1}\) and \(D_n^{33} = n^{-\beta}I_{r_1}\), we get
    \[
    S_n^{12} = \frac{1}{2}\left(\Sigma^{12} + \Sigma^{12}K^T(I+K)^{-T}\right) + o_p(1) = \frac{1}{2}\Sigma^{12} + o_p(1).
    \]
    Collecting all the limiting expressions, we define
    \[
    \bar{K} = \begin{pmatrix}
        I_{r_1} & 0 \\ 0 & K
    \end{pmatrix}, \quad \Omega = \int_0^\infty e^{-s\bar{K}} \begin{pmatrix}
        \Sigma^{11} & \Sigma^{12} \\
        \Sigma^{21} & \Sigma^{22}
    \end{pmatrix} e^{-s\bar{K}^T}ds
    \]
    and observe that
    \begin{align*}
        \Sigma^{11} &= \left((I+\bar{K})^{-1}\left(2\bar{K}\Omega \bar{K}^T + \Sigma\right)(I+\bar{K})^{-T}\right)^{11} \\
        \Sigma^{12} &= \left((I+\bar{K})^{-1}\left(2\bar{K}\Omega \bar{K}^T + \Sigma\right)(I+\bar{K})^{-T}\right)^{12}.
    \end{align*}
    so that \(S_n = \Sigma_{Z, n} + o_p(1)\) with
    \[
    \Sigma_{Z, n} =  \frac{1}{2}\begin{pmatrix}
        (I+\bar{K})^{-1}\left(2\bar{K}\Omega \bar{K}^T + \Sigma\right)(I+\bar{K})^{-T} & 0 \\
        0 & \Sigma^{33}_{X, n}
    \end{pmatrix}.
    \]
    To see that \(\Sigma_{Z, n}\) is asymptotically invertible and bounded simply note that the real part of \(\bar{K}_{ii}\) is in \([0, 1]\) for all \(1\le i \le r_2\) and \(\Sigma\) is positive definite. Therefore, the top left block of \(\Sigma_{Z, n}\) is some fixed positive definite matrix for all \(n\in \mathbb{N}\). The result then follows from Lemma \ref{lem: h_lam} in the Appendix.

    Once \eqref{eq: ivx_zz} has been established, the proof of \eqref{eq: ivx_ze} is completely analogous to the proof of equation \eqref{eq: non_stat_app_xx}.
\end{proof}

\begin{proof}[Proof of Theorem \ref{thm: ivx}]
    It follows from Lemma \ref{lem: ivx_seq} and \ref{lem: ivx_asym} in combination with Proposition 8 in the supplementary material for \cite{lundborg2021conditional} that \(\hat{t}^2_{IV, \Gamma}\rightarrow_w \chi^2_{d^2}\) uniformly over \(\Theta\).
\end{proof}

\end{appendix}


\textbf{Funding}. \smallskip The authors gratefully acknowledge financial support from Novo Nordisk Foundation through Grant NNF20OC0062958 and from
Independent Research Fund Denmark | Natural Sciences through Grant 9040-00215B.


\bibliographystyle{plainnat} 
\bibliography{bibliography}       


\end{document}